\documentclass[preprint,11pt]{amsart}
\usepackage[T1]{fontenc}
\usepackage[latin9]{inputenc}
\usepackage{multirow}
\usepackage{array} 
\usepackage{amsmath, amssymb}
\usepackage{empheq}
\usepackage{amsthm} 

\setlength{\textwidth}{15.5cm}
\setlength{\oddsidemargin}{1cm}
\setlength{\evensidemargin}{1cm}
\setlength{\textheight}{18.5cm}
\setlength{\parskip}{.5\baselineskip}
\setlength{\parskip}{4mm}
\setlength{\headsep}{1.5cm}
\setlength{\parindent}{0mm}

 \newtheorem{theorem}{Theorem}[section]

\newtheorem{proposition}[theorem]{Proposition}
\newtheorem{remark}[theorem]{Remark}
\newtheorem{definition}[theorem]{Definition}
\newtheorem{corollary}[theorem]{Corollary}
 
\usepackage{color}
\usepackage{graphicx,color}
\usepackage{graphicx} 
\usepackage{subcaption}
\usepackage{amsmath, amssymb, graphics}


\def \r{\mathbb R}

\def \n{\mathbb N}
\def \z{\mathbb Z}
\def \L{\mathbb L}

  
\title{Stationary soap films with vertical potentials}
\author{Rafael L\'opez}
\email{rcamino@ugr.es}
\address{ Departamento de Geometr\'{\i}a y Topolog\'{\i}a\\
Universidad de Granada\\
18071 Granada, Spain}
\author{\'Alvaro P\'ampano}
\email{alvaro.pampano@ttu.edu}
\address{Department of Mathematics and Statistics\\ 
Texas Tech University\\ 
Lubbock, TX, 79409, USA}

\begin{document}

\begin{abstract}
We classify cylindrical surfaces in the Euclidean space whose mean curvature is a $n$th-power of the distance to a reference plane. The generating curves of these surfaces, called $n$-elastic curves, have a variational characterization as critical points of a curvature energy generalizing the classical elastic energy. We give a full description of such curves obtaining, in some particular cases, closed curves including simple ones.
\end{abstract}
\keywords{ mean curvature, elastic curve, curvature energy, phase space}
\subjclass[2000]{53C42, 49Q10,  34C05, 37K25}
 \maketitle

\section{Introduction}\label{1}

We consider the equilibrium shape of a (possibly incompressible) fluid volume $\Omega$ of constant mass density contained in the Euclidean 3-space $\mathbb{R}^3$ with a potential energy depending on the height $z$. The boundary of the fluid bulk $\Omega$ will be regarded as an immersed smooth surface $\Sigma$ modeling the free interface between the interior and ambient fluids. 

In this setting, the free surface energy is proportional to the surface area $\mathcal{A}[\Sigma]$, while the incompressibility condition of the fluid volume can be included as a Lagrange multiplier fixing the enclosed volume $\mathcal{V}[\Omega]$. When the domain $\Omega$ is not closed, $\mathcal{V}[\Omega]$ will represent the algebraic volume between the surface $\Sigma$ and the plane $z=0$. Similarly, if $\Omega$ is not embedded, $\mathcal{V}[\Omega]$ will be regarded as the signed algebraic volume. Finally, to account for the potential energy an extra term will be added to the total energy which has the expression
\begin{equation}\label{energy}
E[\Sigma]=\sigma\mathcal{A}[\Sigma]+\eta\int_\Omega f(z)\,dV+\varpi\mathcal{V}[\Omega]\,,
\end{equation}
where $f$ is a smooth function depending on the height $z$ and is defined on a suitable domain $\Omega\subset\mathbb{R}^3$ occupied by the fluid, whose boundary is described by the surface $\Sigma$. The energy parameters $\sigma>0$, $\eta\in\mathbb{R}$ and $\varpi\in\mathbb{R}$ are constants motivated by the physical applications. To be precise, the parameter $\sigma>0$ represents the surface tension, $\eta$ is a constant depending on the difference between the mass densities of the interior and ambient fluids and $\varpi$ acts as a Lagrange multiplier which enforces incompressibility, in such a way that if $\varpi=0$ there is no volume constraint. 

One of the physically most relevant cases appears when $f(z)=z$ because the interface models a homogeneous liquid drop adhering to a horizontal plane under the action of constant gravity. When $\eta<0$ it corresponds to a sessile drop, while if $\eta>0$ we obtain a pendent drop. In absence of gravity ($\eta=0$), the surface has constant mean curvature. We refer to the book of Finn for details (\cite{fi}). Also in early works of Serrin and Wente we can find different motivations for considering potential energies depending on one space coordinate (\cite{se,we}). In this paper we consider other possible potentials for arbitrary functions depending on the height $z$, which may give rise to different physical scenarios, as for example $f(z)=z^{-1}$ which represents a potential energy associated to an inverse-square law force. 

The equilibria for the energy $E[\Sigma]$ can be obtained by the balance between the capillary force that comes from the surface tension and the force associated with the potential energy acting on the fluid volume. Therefore, the equilibrium shapes are governed by the Young-Laplace equation
\begin{equation}\label{energy2}
2\sigma H=\eta f(z)+\varpi\quad\quad\quad\text{on $\Sigma$}\,,
\end{equation}
where $H$ denotes the mean curvature of the surface $\Sigma$. In Section \ref{2}, we obtain this condition as the Euler-Lagrange equation for the associated variational problem (Proposition \ref{EulerLagrange}) and use it to show that in many cases there are not closed and embedded equilibria (Proposition \ref{pr-closed}).

In the rest of the paper we focus on equilibria that are invariant in one space direction. In such a case the Young-Laplace equation \eqref{energy2} reduces to a problem of planar curves whose curvature depends on the distance to a fixed straight line.  The study of these types of problems goes back to the 17th century when Bernouilli analyzed the \emph{lintearia} which is the shape of a long cloth sheet full of water, obtaining a relation with the classical elastic curves (\cite{Truesdell}). This particular case corresponds with the choice $f(z)=z$ in \eqref{energy2}.

Planar curves with prescribed curvature also have interest in physics by themselves, as for instance, to understand some processes in dynamics of plasmas the motion of charged particles in specified fields are studied (\cite{bi,li}). We briefly describe this application in what follows. In classical physics, the equation of motion $t\mapsto q(t)=(x(t),y(t),z(t))\in\mathbb{R}^{3}$ for a (nonrelativistic) particle of mass $m$ and charge $e$ under the action of a magnetic field $\mathbf{B}$ is given by the Newton-Lorentz law 
\begin{equation}
\label{eq-phy1}
m\ddot q=e\left(\dot q\times \mathbf{B}\right),
\end{equation}
where the upper dot denotes the derivative with respect to time $t$. In general, these equations cannot be integrated analytically and the associated trajectories are very complicated, with the exception of some particular cases, such as when the vector field $\mathbf{B}$ is uniform.  Assume that $\mathbf{B}$ is parallel to a fixed direction and that its magnitude depends on the distance to a plane parallel to this direction. After a change of coordinates we may suppose that the direction is $e_1=(1,0,0)$ and that the plane is the $xy$-plane. Then $\mathbf{B}(x,y,z)=B(z)e_1$. Writing down explicitly each coordinate of the vector equation \eqref{eq-phy1} for the magnetic field $\mathbf{B}(x,y,z)=(B(z),0,0)\in\mathbb{R}^{3}$, we obtain the system of second order differential equations
\begin{equation}
\label{eq-phy2}
\left\{\begin{array}{l}
\ddot x(t)=0\\
\ddot y(t)=\dfrac{e}{m}B(z)\dot z(t)\\
\ddot z(t)=-\dfrac{e}{m}B(z)\dot y(t)\,.
\end{array}\right.
\end{equation}
From the first equation, $x(t)$ describes a uniform motion and the projection of $q(t)$ on the $yz$-plane is a trajectory which only depends on the $z$-coordinate. If we denote by $q(t)$ again this planar curve and after the change of variables  $\gamma(t)=q(-mt/e)$, the system \eqref{eq-phy2} reduces to $\ddot \gamma=J B(z)\dot \gamma$, where $J$ 
is the counter-clockwise rotation of angle $\pi/2$ in the $yz$-plane. It turns out that this equation can be viewed as a problem of prescribing the curvature for planar curves. Indeed, observe first that the velocity $\lVert\dot\gamma\rVert$ of $\gamma$ is constant since 
\[\frac{d}{d t}\lVert \dot \gamma\rVert^{2}=2\langle\ddot\gamma(t),\dot\gamma(t)\rangle=2 B(z)\langle J \dot\gamma(t),\dot\gamma(t)\rangle=0\,.\]
Second, since the curvature of $\gamma$ is 
\[\kappa(t)=\frac{\langle \ddot \gamma(t),J\dot\gamma(t)\rangle}{\lVert\dot\gamma(t)\rVert^3}\,,\]
we obtain that
\[\kappa(t)=\frac{B(z)}{\lVert\dot\gamma(t)\rVert}\,.\]
For instance, if $\gamma$ has unit velocity, then $\kappa(s)=B(z)$. As a first model for the motion of plasma, bounded or even closed trajectories deserve further investigation. A special case occurs when $B$ is the identity so that $\kappa(s)=z(s)$ obtaining elastic curves from the classical theory of Bernouilli and Euler (\cite{Euler}).

More generally,  we will study planar curves whose curvature satisfies 
$$\kappa(s)=z^n+\mu\,,$$ 
with $n, \mu\in\r$. We will call these curves \emph{$n$-elastic curves} and they will also arise as the generating curves of right cylinders satisfying \eqref{energy2} for $f(z)=z^n$. In Section \ref{3}, we will prove that $n$-elastic curves are solutions of a variational problem involving energy functionals depending on the curvature (Theorem \ref{crit}). Section \ref{4} is devoted to the analysis of the geometric properties related to symmetries of $n$-elastic curves (Propositions \ref{pr6} and \ref{pr1}) while on Section \ref{seccer} we investigate the existence of closed curves (Proposition \ref{pr3} and Theorem \ref{t-closed}). Finally, in Section \ref{5} we classify the shapes of $n$-elastic curves giving a complete catalog of all the possible types. Besides some horizontal straight lines (Proposition \ref{lines}), among curves whose arc length parameter is defined on the entire real line, which will be called {\it complete} curves, we obtain families of $n$-elastic curves which imitate all the shapes of Euler's classification of elastic curves (Figure \ref{figtipos}) as well as different families of curves (Figure \ref{figtipos2}).

Following the terminology of classical elastic curves (see, for instance, the lecture notes of Singer \cite{si}) when the curvature is periodic we will distinguish two families of curves, {\it orbitlike $n$-elastic curves} defined by the property that their periodic curvature has constant sign (see Figure \ref{figtipos}, (a)), and {\it wavelike $n$-elastic curves} which are those curves whose curvature $\kappa$ oscillates between a value $\kappa_0$ and $-\kappa_0$ increasing and decreasing as the parameter goes in the domain. Among the family of wavelike $n$-elastic curves we may find \emph{multiloops} (Figure \ref{figtipos}, (c)), \emph{pseudo-lemniscates} (Figure \ref{figtipos}, (d)), \emph{deep waves} (both self-intersecting and simple, Figure \ref{figtipos}, (e) and (f), respectively), \emph{rectangular} $n$-elastic curves (Figure \ref{figtipos}, (g)) and \emph{shallow waves} (Figure \ref{figtipos}, (h)). 

As in the classical theory of elastic curves, in between the wavelike and orbitlike families of $n$-elastic curves, we find the {\it borderline $n$-elastic curve}, which has nonperiodic curvature and asymptotically approaches a horizontal $n$-elastic line (Figure \ref{figtipos}, (b)).  

\begin{figure}[hbtp]
\centering
\begin{subfigure}[b]{0.22\linewidth}
\includegraphics[width=\linewidth]{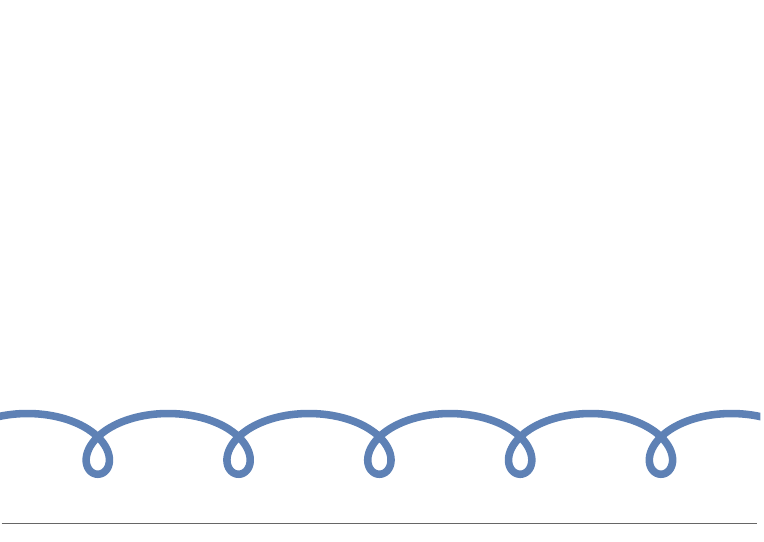}
\caption{Orbitlike}
\end{subfigure}
\quad
\begin{subfigure}[b]{0.22\linewidth}
\includegraphics[width=\linewidth]{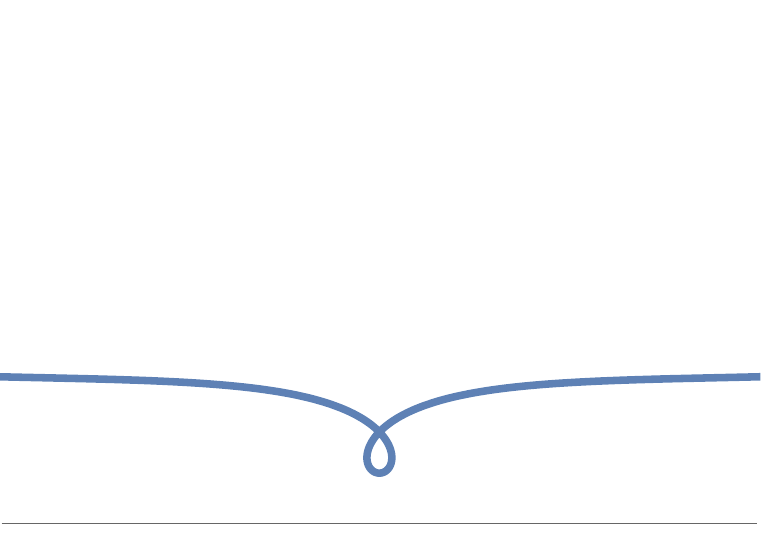}
\caption{Borderline}
\end{subfigure}
\quad
\begin{subfigure}[b]{0.22\linewidth}
\includegraphics[width=\linewidth]{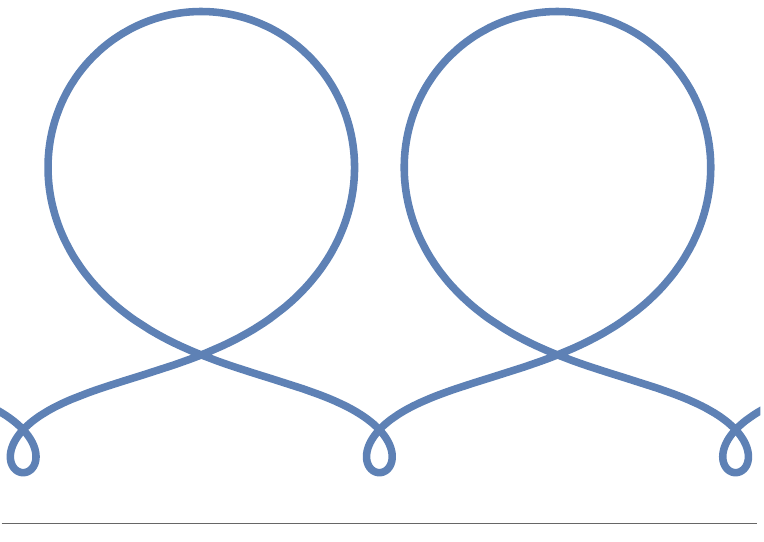}
\caption{Multiloop}
\end{subfigure}
\quad
\begin{subfigure}[b]{0.22\linewidth}
\includegraphics[width=\textwidth]{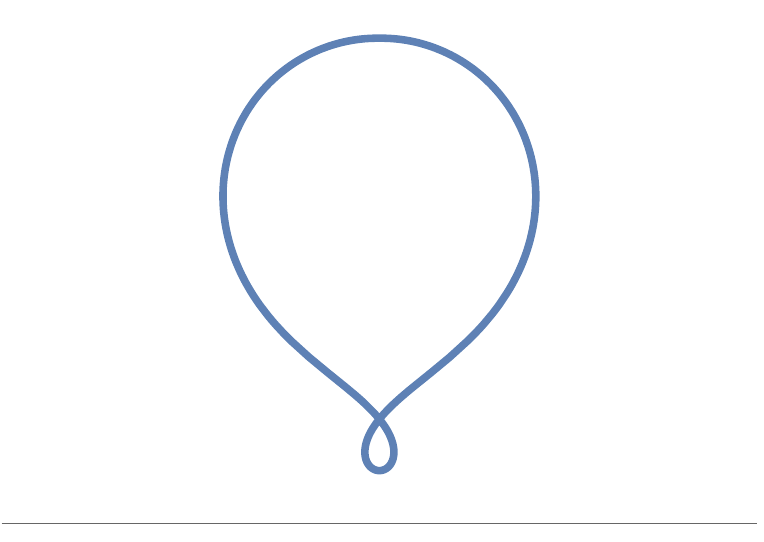}
\caption{Ps.-Lemniscate}
\end{subfigure}
\\
\begin{subfigure}[b]{0.22\linewidth}
\includegraphics[width=\textwidth]{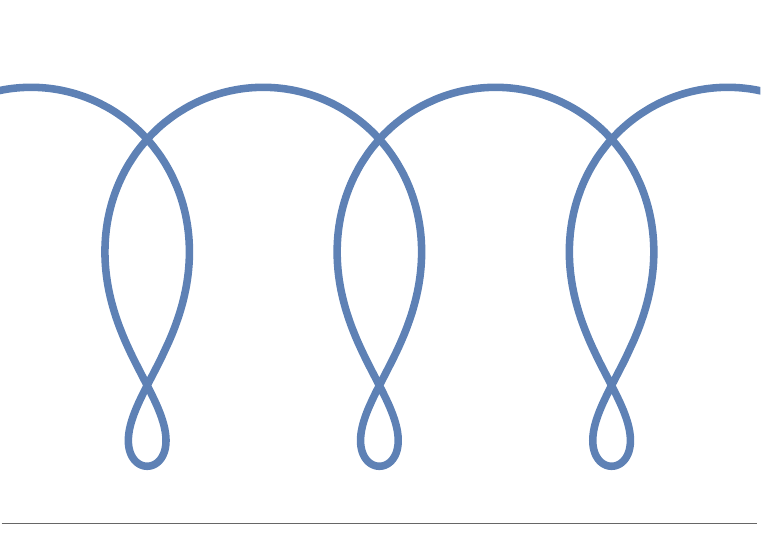}
\caption{Deep Waves}
\end{subfigure}
\quad
\begin{subfigure}[b]{0.22\linewidth}
\includegraphics[width=\textwidth]{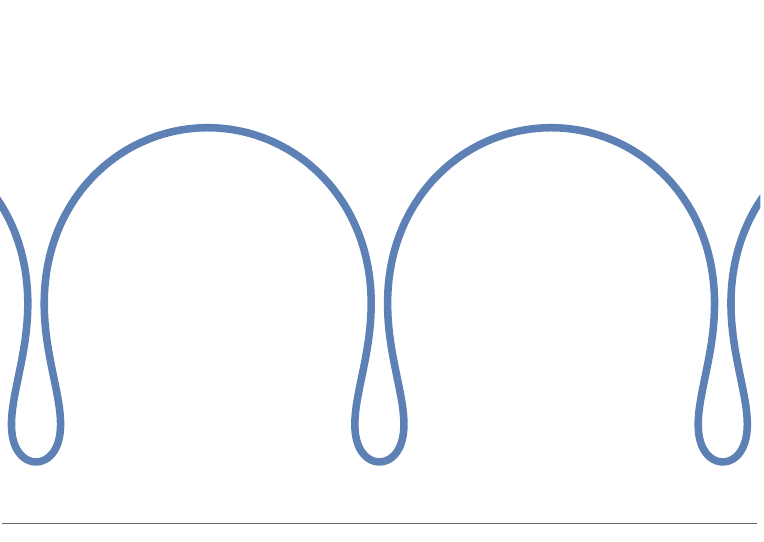}
\caption{Deep Waves}
\end{subfigure}
\quad
\begin{subfigure}[b]{0.22\linewidth}
\includegraphics[width=\textwidth]{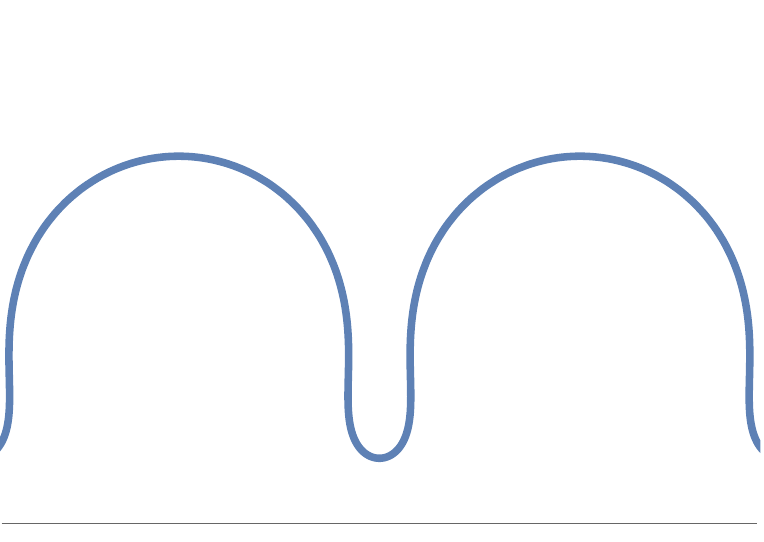}
\caption{Rectangular}
\end{subfigure}
\quad 
\begin{subfigure}[b]{0.22\linewidth}
\includegraphics[width=\textwidth]{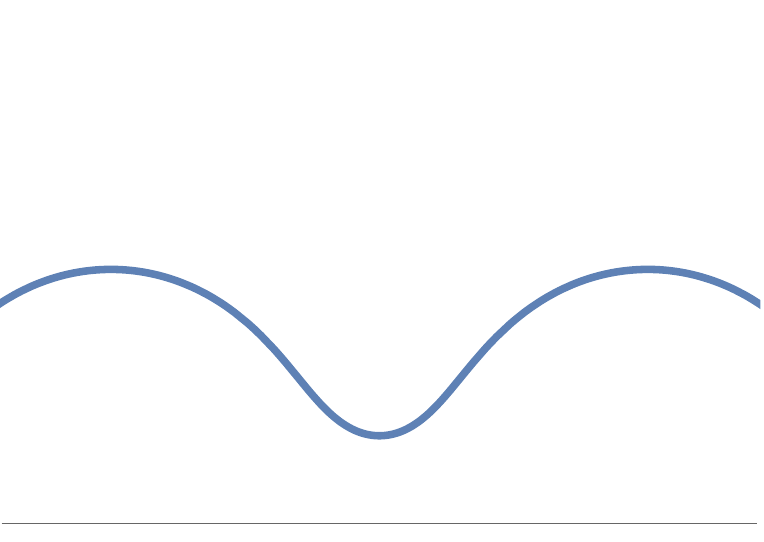}
\caption{Shallow Waves}
\end{subfigure}
\caption{A family of $n$-elastic curves for $n=-2.5$ and $\mu=-1$.}\label{figtipos}
\end{figure}
 
All these shapes imitate the types of Euler's elasticae (see for example \cite{Euler} and, more recently, \cite{mi}) although the equation of the classical elastica, $2\kappa''+\kappa^3-\lambda\kappa=0$ for some constant $\lambda$, is completely different for arbitrary choices of $n$ and $\mu$. Observe that $n$-elastic curves of above types may have their loops pointing towards the other direction. This is the case, for instance, of the classical elastic curves ($n=1$ and $\mu=0$). Moreover, if $n\in\n$, $n$-elastic curves may cut the $y$-line, i.e., the line of equation $z=0$, as well (compare, once again, with classical elastic curves). In particular, if $n\in\n$ is even, among the $n$-elastic curves which cut the $y$-line, we prove the existence of closed curves (Theorem \ref{t-closed}). Some of these curves are also simple, which gives a first difference with respect to the theory of classical elastic curves. See Figure \ref{cerradas}.

\begin{figure}[hbtp]
\centering
\makebox[\textwidth][c]{
\includegraphics[width=.2\textwidth]{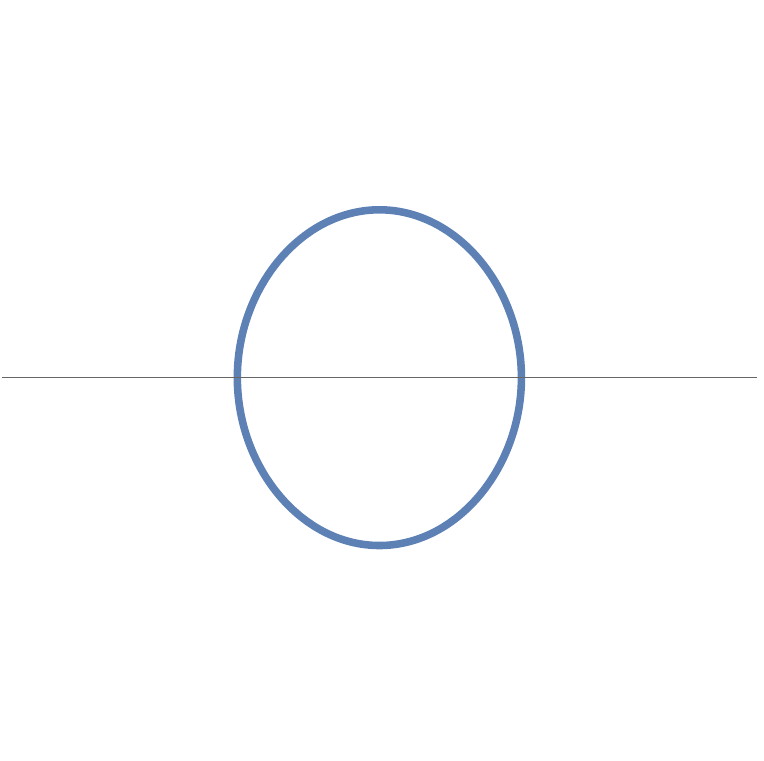}\,\includegraphics[width=.2\textwidth]{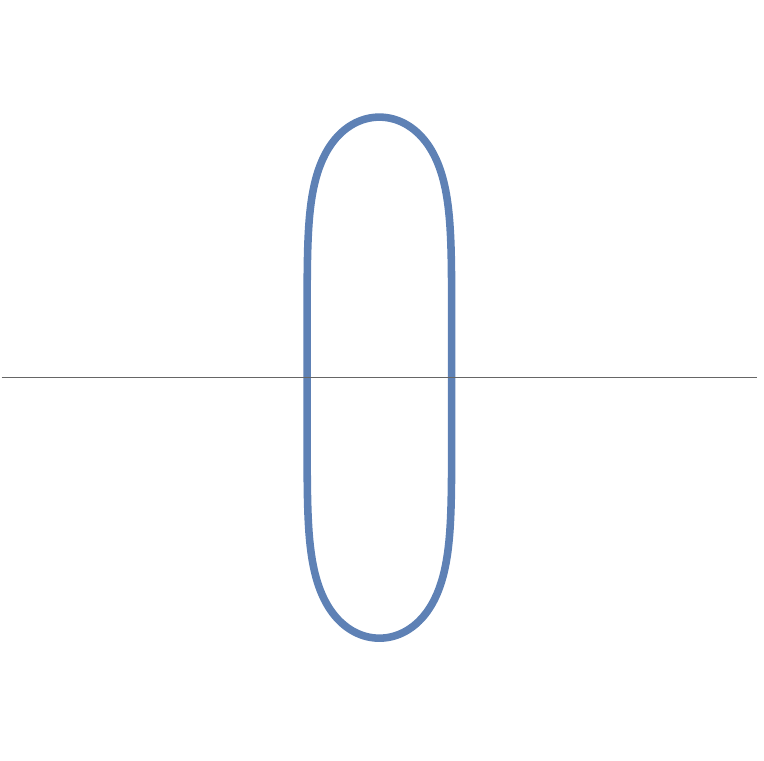}\, \includegraphics[width=.2\textwidth]{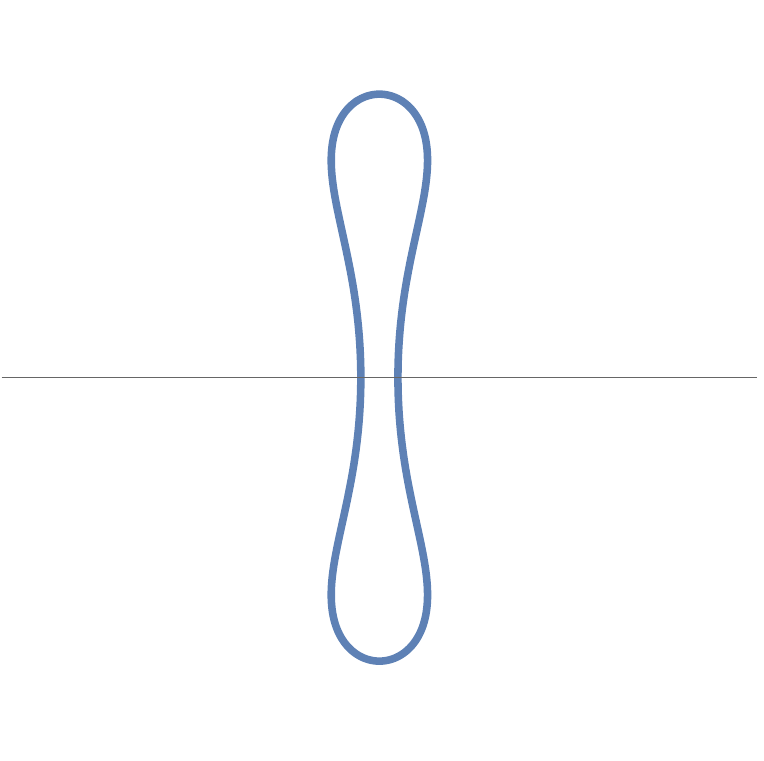}\, \includegraphics[width=.2\textwidth]{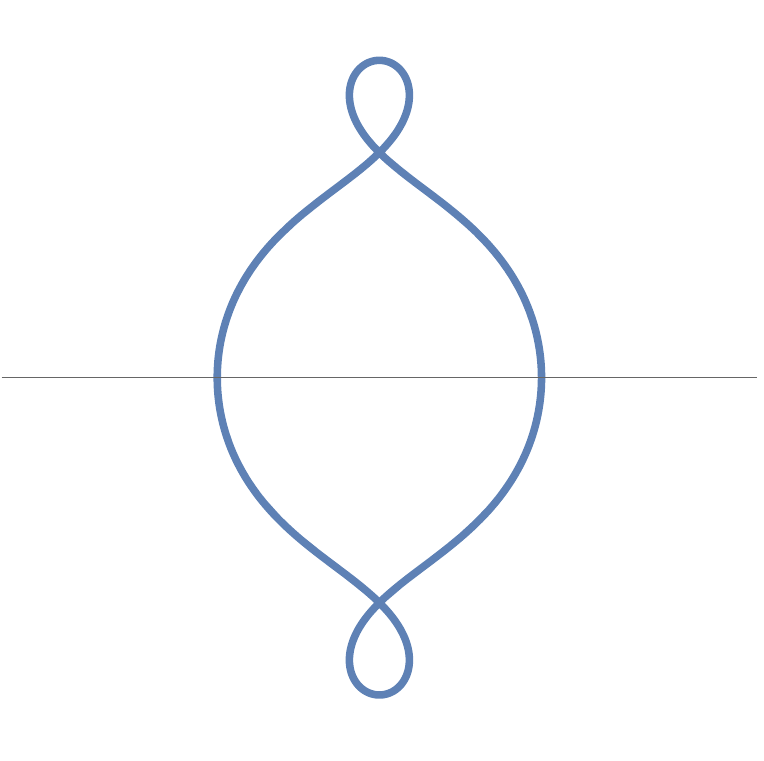}\, \includegraphics[width=.2\textwidth]{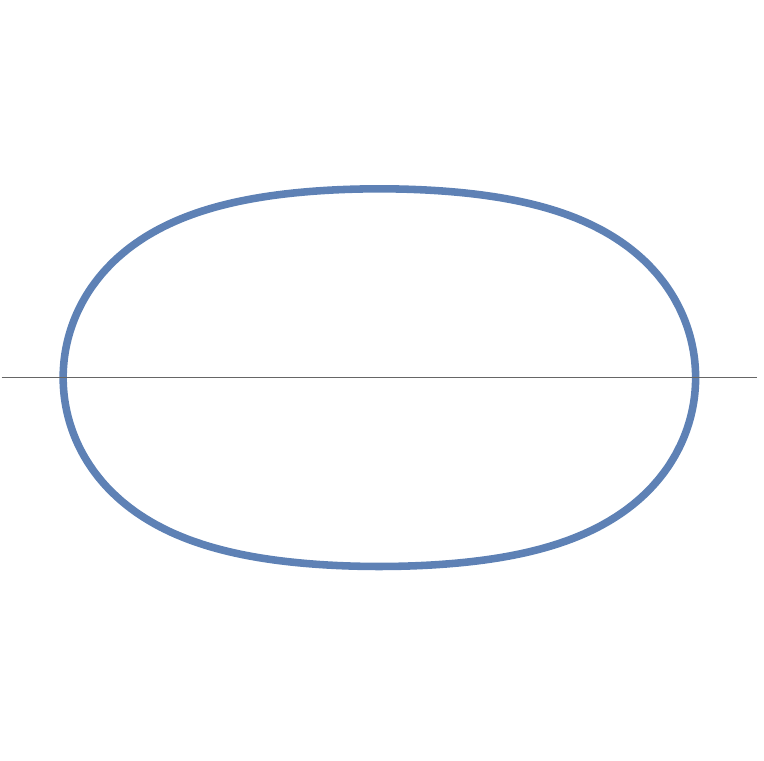}}
\caption{Closed $n$-elastic curves for $n=4$. From left to right: $\mu=1$, $\mu=0$, $\mu=-0.35$, $\mu=-1$ and $\mu=-1.2$.}\label{cerradas}
\end{figure}

Apart from this first difference regarding closed curves, among the general case of $n$-elastic curves, there is a family consisting on curves essentially different to above cases. Indeed, we have nonperiodic $n$-elastic curves that are not borderline. We call them \emph{catenary-like $n$-elastic curves}, since when $n=-2$ and $\mu=0$ we find the catenary. Some of these curves are simple while others have self-intersections (Figure \ref{figtipos2}, (c) and (d)). The simple ones are graphs over the $y$-line, either defined on a bounded interval (Figure \ref{figtipos2}, (e)) or entire graphs (Figure \ref{figtipos2}, (f)-(h)).    

\begin{figure}[hbtp]
\centering
\begin{subfigure}[b]{0.22\linewidth}
\includegraphics[width=\textwidth]{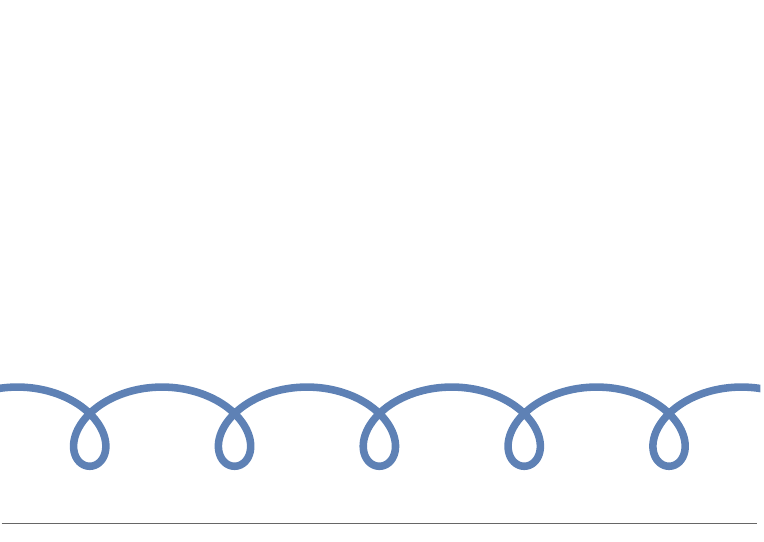}
\caption{}
\end{subfigure}
\quad
\begin{subfigure}[b]{0.22\linewidth}
\includegraphics[width=\textwidth]{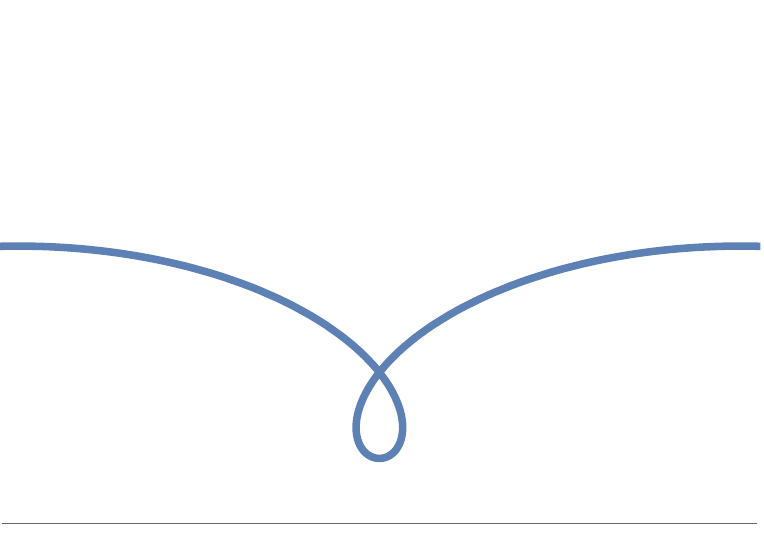}
\caption{}
\end{subfigure}
\quad
\begin{subfigure}[b]{0.22\linewidth}
\includegraphics[width=\textwidth]{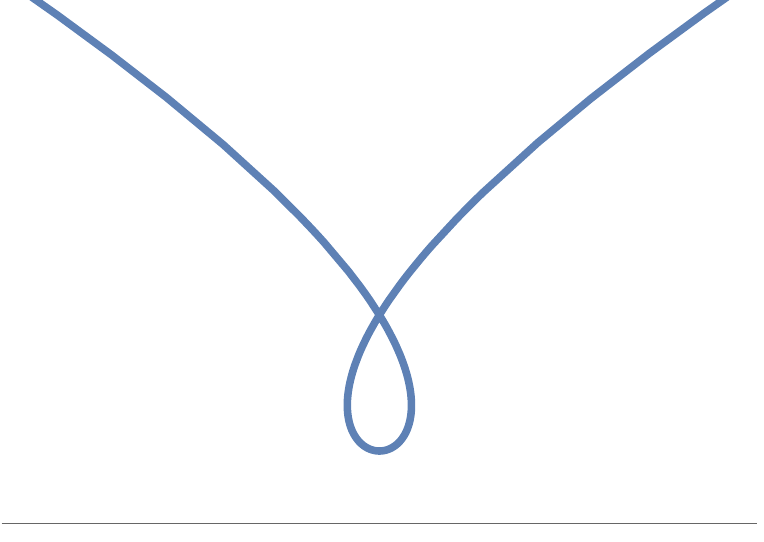}
\caption{}
\end{subfigure}
\quad
\begin{subfigure}[b]{0.22\linewidth}
\includegraphics[width=\textwidth]{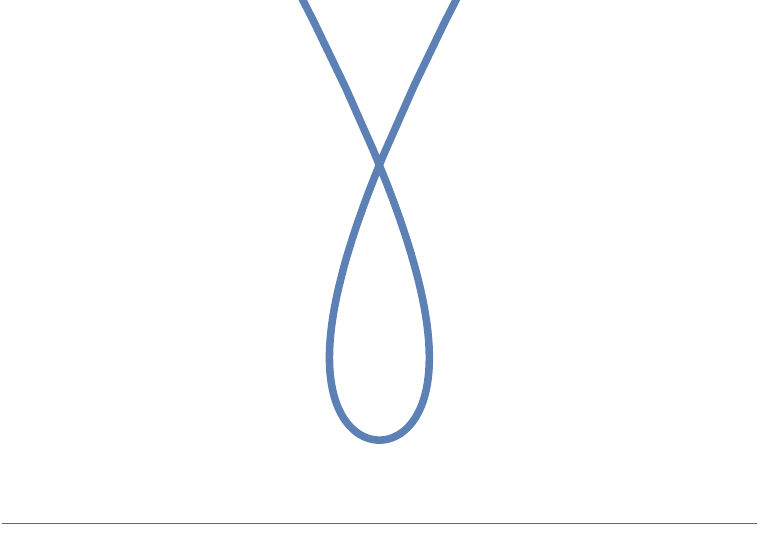}
\caption{}
\end{subfigure}
\\
\begin{subfigure}[b]{0.22\linewidth}
\includegraphics[width=\textwidth]{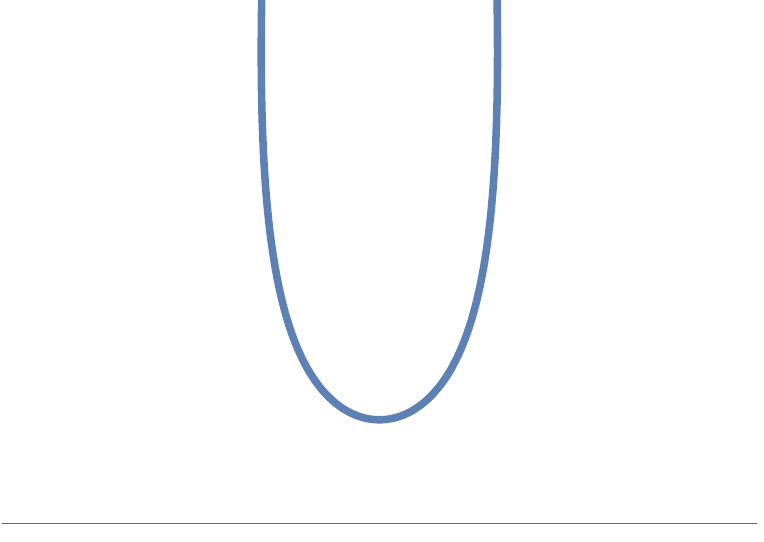}
\caption{}
\end{subfigure}
\quad
\begin{subfigure}[b]{0.22\linewidth}
\includegraphics[width=\textwidth]{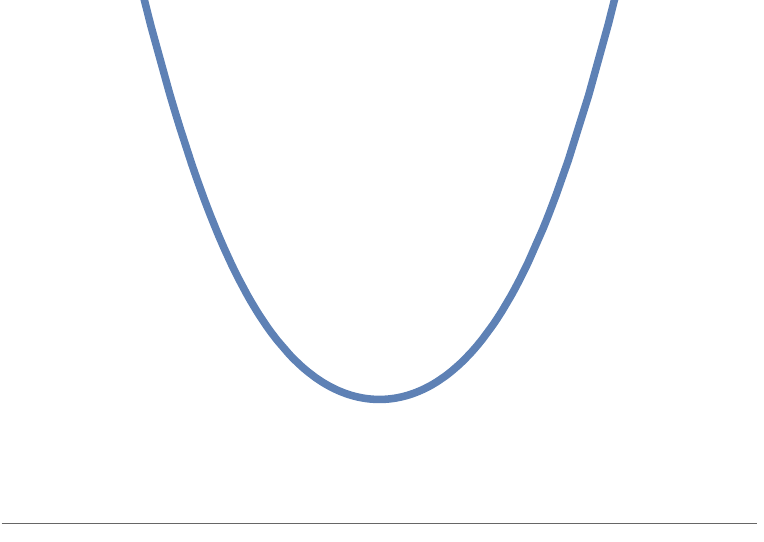}
\caption{}
\end{subfigure}
\quad
\begin{subfigure}[b]{0.22\linewidth}
\includegraphics[width=\textwidth]{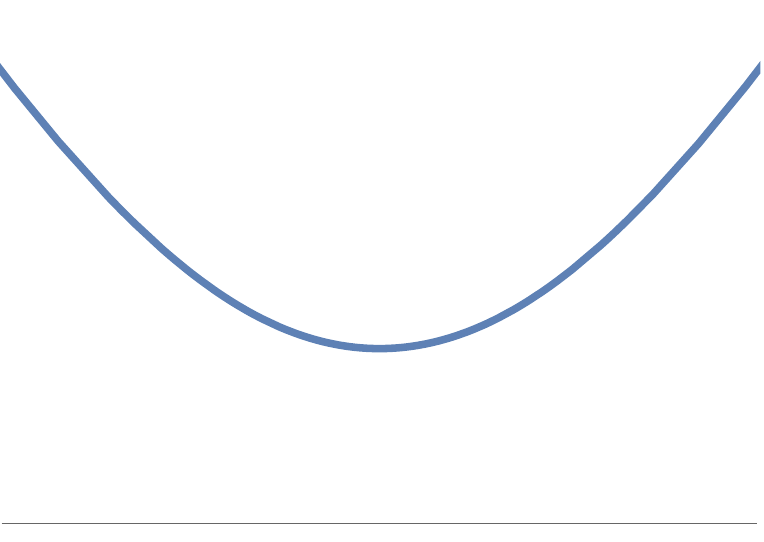}
\caption{}
\end{subfigure}
\quad
\begin{subfigure}[b]{0.22\linewidth}
\includegraphics[width=\textwidth]{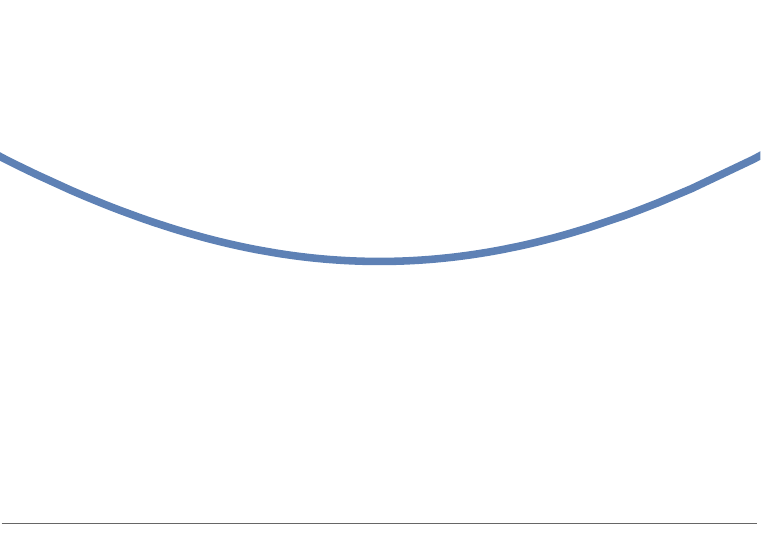}
\caption{}
\end{subfigure}
\caption{A family of $n$-elastic curves for $n=-2.5$ and $\mu=0$.}\label{figtipos2}
\end{figure}

All the curves from Figures \ref{figtipos} and \ref{figtipos2} are complete. However, if $n>-1$ and $n\notin\n$, $n$-elastic curves are not defined at $z=0$, but they may approach this line (Corollary \ref{n1}). Consequently, curves which are not complete and have shapes like in Figure \ref{cuts} can also be obtained. These curves are parts of previous complete curves.

\begin{figure}[hbtp]
\centering
\includegraphics[width=.22\textwidth]{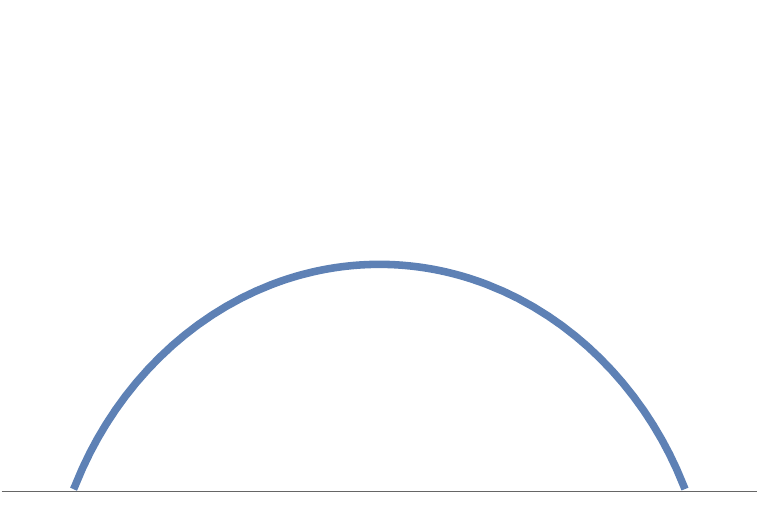}\quad\, \includegraphics[width=.22\textwidth]{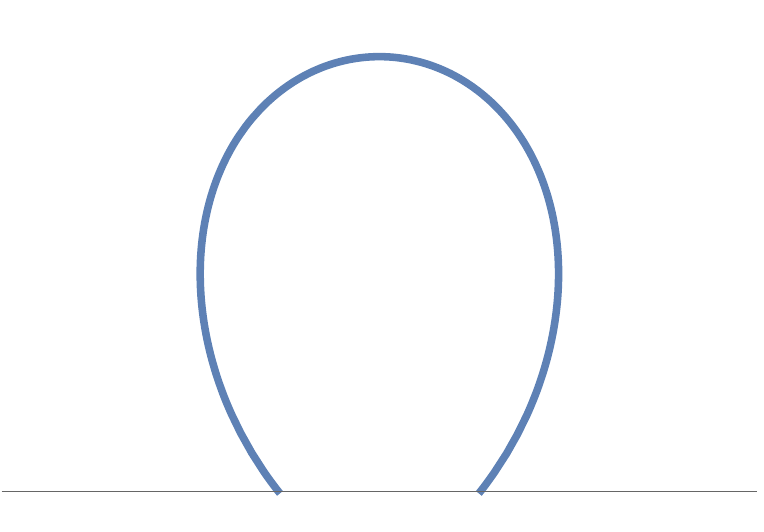}\quad\, \includegraphics[width=.22\textwidth]{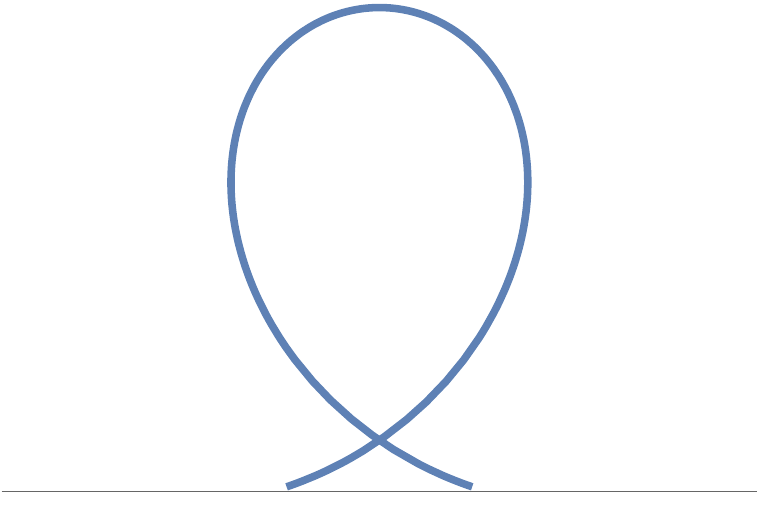}\quad\,\includegraphics[width=.22\textwidth]{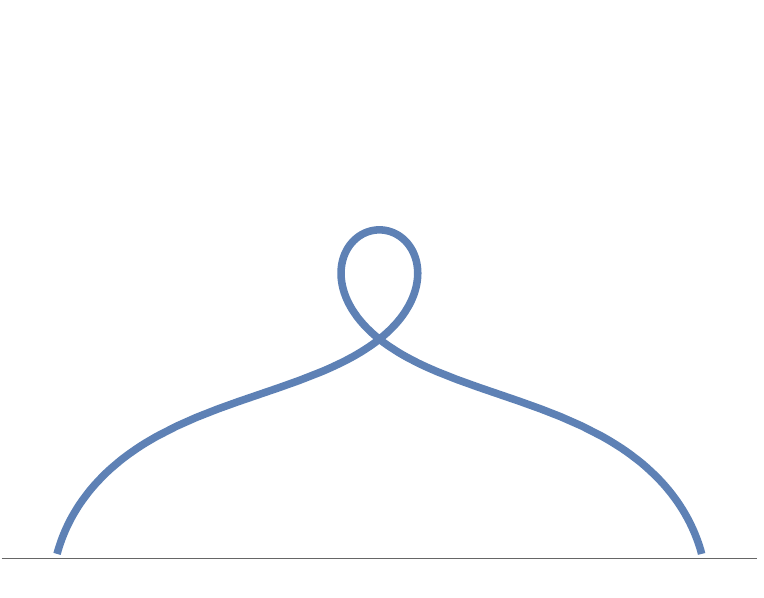}
\caption{Noncomplete $n$-elastic curves. Here $n=2.5$ and $\mu=1$ ($\mu=-1$ in the right one).}\label{cuts}
\end{figure}

All the figures of this paper have been obtained with the help of the software \textit{Mathematica} using the \texttt{NDSolve} command (no specific package). In all of them we show the $n$-elastic curve together with the $y$-line.

\section{Variational formulation of the problem}\label{2}

Let $\mathbb{R}^3$ be the Euclidean $3$-space with coordinates $(x,y,z)$ and $X:\Sigma\rightarrow\mathbb{R}^3$ be a smooth immersion of an oriented surface (with or without boundary) $\Sigma$. When the context is clear, no distinction will be made between the abstract surface $\Sigma$ and its image $X(\Sigma)\subset\mathbb{R}^3$. We denote by $\nu\colon\Sigma\rightarrow\mathbb{S}^2\subset\mathbb{R}^3$ the associated Gauss map, which will be identified with the (globally defined) unit normal vector field along $\Sigma$. 

The energy $E[\Sigma]$ in \eqref{energy} is a linear combination of the surface area $\mathcal{A}[\Sigma]$,  the potential energy depending on the height $z$ and the (signed) algebraic volume between the surface $\Sigma$ and the plane $z=0$. For an immersion $X\colon\Sigma\rightarrow\mathbb{R}^3$ the surface area is defined by 
$$\mathcal{A}[\Sigma]=\int_\Sigma\,d\Sigma\,.$$
On the other hand, if we denote by $W=z e_3$, where $e_3=(0,0,1)$, the divergence of $W$ is $\mbox{div}\,W=1$. Similarly, let $g(z)$ be a function such that $g'(z)=f(z)$ and define $\widetilde{W}=g(z)e_3$. Then $\mbox{div}\,\widetilde{W}=f(z)$. Therefore, after applying the divergence theorem, the other terms in the energy $E[\Sigma]$ are defined by
$$\int_\Omega f(z)\,dV=\int_\Sigma \langle\widetilde{W},\nu\rangle\,d\Sigma\,,\quad\quad\quad \mathcal{V}[\Omega]=\int_\Sigma \langle W,\nu\rangle\, d\Sigma\,,$$
where $\Omega$ is the domain in $\mathbb{R}^3$ occupied by the bulk of the fluid volume. Consequently, for the immersion $X\colon\Sigma\rightarrow\mathbb{R}^3$, we consider the total energy 
\begin{equation}\label{E}
E[\Sigma]=\sigma\int_\Sigma\,d\Sigma+\eta\int_\Sigma g(z)\,\langle\nu,e_3\rangle\,d\Sigma+\varpi\int_\Sigma z\,\langle\nu,e_3\rangle\,d\Sigma\,,
\end{equation}
where the constants $\sigma>0$ and $\eta$, $\varpi\in\mathbb{R}$ are fixed.

We find the Euler-Lagrange equation associated to $E[\Sigma]$ considering a one-parameter family of variations $X\colon\Sigma\times\left(-t,t\right)\rightarrow\mathbb{R}^3$,  $t>0$,  of the initial immersion $X$ defined by   $Y(-,\epsilon)=X+\epsilon\,\delta X+\mathcal{O}(\epsilon^2)$ for some sufficiently smooth variation vector field $\delta X$. We denote the variation of the functional $E[\Sigma]$ by
$$\delta E[\Sigma]=\frac{d}{d\epsilon}{{\Big|}_{\epsilon=0}}E[X(\epsilon)]\,.$$
In order to obtain the Euler-Lagrange equation characterizing equilibria on $\Sigma$, it is enough to consider compactly supported normal variations. Let $\psi\in C_0^\infty(\Sigma)$ and consider the variation vector field $\delta X=\psi\nu$. Then,  using the standard formula $\delta d\Sigma=-2H\psi d\Sigma$ (see \cite[p. 16]{Rafa}) we obtain the first variation of the first term in \eqref{E},
$$\delta\left(\int_\Sigma\,d\Sigma\right)=-2\int_\Sigma H\,\psi\,d\Sigma\,.$$
For the second term in \eqref{E} we have
$$\delta\left(\int_\Sigma g(z)\langle\nu,e_3\rangle\,d\Sigma\right)=\int_\Sigma\left(g'(z)\delta z\langle\nu,e_3\rangle+g(z)\langle\delta\nu,e_3\rangle-2g(z)\langle\nu,e_3\rangle H\,\psi\right)d\Sigma\,,$$
where we have used once again $\delta d\Sigma$. Next, we conclude from $\delta z=\langle\delta X,e_3\rangle=\psi\langle\nu,e_3\rangle$ and $\delta\nu=-\nabla\psi$ that
\begin{eqnarray*}
\delta\left(\int_\Sigma g(z)\,\langle\nu,e_3\rangle\,d\Sigma\right)&=&\int_\Sigma\left(g'(z)\langle\nu,e_3\rangle^2\psi-g(z)\langle\nabla\psi,e_3\rangle-2g(z)\langle\nu,e_3\rangle H\psi\right)d\Sigma\\
&=&\int_\Sigma\left(g'(z)\left(\langle\nu,e_3\rangle^2+\lVert\nabla z\rVert^2\right)+g(z)\Delta z-2g(z)\langle\nu,e_3\rangle H\right)\psi\,d\Sigma\\
&=&\int_\Sigma g'(z)\psi\,d\Sigma\,,
\end{eqnarray*}
where in the second line we have integrated by parts the second term and in the third one we have applied the classical formula $\Delta X=2H\nu$ (for details, see \cite[p. 29]{Rafa}). Similarly, the variation of the last term of \eqref{E} is (repeat above computations for $g(z)=z$)
$$\delta\left(\int_\Sigma z\,\langle\nu,e_3\rangle\,d\Sigma\right)=\int_\Sigma\psi\,d\Sigma\,.$$
Consequently, combining everything, the first variation formula for $E[\Sigma]$ is given by
$$\delta E[\Sigma]=\int_\Sigma\left(-2\sigma H+\eta f(z)+\varpi\right)\psi\,d\Sigma\,.$$
The Fundamental Lemma of Calculus of Variations then gives the necessary condition to be satisfied along equilibria, the so-called Euler-Lagrange equation. We sum up this in the following proposition.

\begin{proposition}\label{EulerLagrange} Let $X\colon\Sigma\rightarrow\mathbb{R}^3$ be an equilibrium immersion for the energy $E[\Sigma]$. Then, regardless of the boundary conditions, the equation 
\begin{equation}\label{EL}
2\sigma H=\eta f(z)+\varpi\,,
\end{equation}
must hold on $\Sigma$.
\end{proposition}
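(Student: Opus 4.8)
The plan is to read off the Euler--Lagrange equation directly from the first variation formula already assembled in the computations above. Since the three first-variation calculations for the area, potential and volume terms have been carried out separately, my first move is simply to combine them with the weights $\sigma$, $\eta$ and $\varpi$, which yields
$$\delta E[\Sigma]=\int_\Sigma\left(-2\sigma H+\eta f(z)+\varpi\right)\psi\,d\Sigma$$
for every normal variation $\delta X=\psi\nu$ with $\psi\in C_0^\infty(\Sigma)$. Before applying this, I would justify that it suffices to test against such compactly supported normal variations: a tangential variation field amounts infinitesimally to a reparametrization of the immersion and hence leaves each of the geometric integrals in $E[\Sigma]$ unchanged, so only the normal component $\psi=\langle\delta X,\nu\rangle$ can contribute to $\delta E[\Sigma]$.

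Next I would invoke the defining property of an equilibrium immersion, namely that $\delta E[\Sigma]=0$ for all admissible variation fields. Because the chosen $\psi\in C_0^\infty(\Sigma)$ have support contained in the interior of $\Sigma$, they satisfy any prescribed boundary condition trivially, and the integration-by-parts step used to derive the displayed formula produces no surviving boundary term. This is precisely why the conclusion holds \emph{regardless of the boundary conditions}: the boundary never enters the argument once we restrict to compactly supported test functions.

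Finally, with $\int_\Sigma(-2\sigma H+\eta f(z)+\varpi)\psi\,d\Sigma=0$ for all $\psi\in C_0^\infty(\Sigma)$, the Fundamental Lemma of the Calculus of Variations forces the continuous integrand to vanish identically, giving $2\sigma H=\eta f(z)+\varpi$ pointwise on $\Sigma$. The only point requiring genuine care — the main, if modest, obstacle — is the passage from ``arbitrary admissible variation'' to ``compactly supported normal variation'', i.e. confirming that narrowing the test class in this way loses no information about equilibria; everything else is the bookkeeping already displayed in the derivation preceding the statement.
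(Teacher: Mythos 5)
Your argument is correct and follows essentially the same route as the paper: assemble the three first-variation computations into $\delta E[\Sigma]=\int_\Sigma(-2\sigma H+\eta f(z)+\varpi)\psi\,d\Sigma$ for compactly supported normal variations and apply the Fundamental Lemma of the Calculus of Variations. Your added justification that tangential variations contribute nothing (being infinitesimal reparametrizations) and that compact support is what makes the conclusion independent of boundary conditions is a welcome elaboration of a point the paper merely asserts, but it does not change the approach.
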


Previously we have shown a couple of interesting choices for $f(z)$. One case is to consider $f$ constant giving rise to surfaces with constant mean curvature. In the particular case that the densities in both sides of the interface $\Sigma$ coincide, then $\Sigma$ is a minimal surface. The case of constant mean curvature surfaces also appears if we consider $\eta=0$. Constant mean curvature surfaces have been widely studied in the literature (we mention here \cite{Rafa} and the references therein) and, hence, from now on we will discard this case. Another relevant case appears when $f(z)=z$ because $\Sigma$ models a liquid drop or, more generally a fluid bubble, supporting or hanging from a plane orthogonal to the $z$-direction under the action of constant gravity (\cite{KP}). For a general function $f(z)$, using a process of reflection and applying the Hopf's maximum principle, Wente deduced that an embedded surface inherits some symmetries of its boundary (\cite{we}); see also similar results in the nonparametric case in \cite{se}.

An interesting problem which merits investigation is whether or not there exist closed surfaces satisfying \eqref{EL}. A rescaling argument proves that in many cases these closed surfaces cannot exist.

\begin{proposition} Let $X:\Sigma\rightarrow\mathbb{R}^3$ be the immersion of a closed surface critical for the energy $E[\Sigma]$. Then,
$$2\sigma\mathcal{A}[\Sigma]+\eta\int_\Omega\left(zf'(z)+3f(z)\right)dV+3\varpi\mathcal{V}[\Omega]=0\,,$$
where $\Omega$ is the open domain of $\mathbb{R}^3$ bounded by $\Sigma$. In particular, if $f(z)=z^n$ and $\eta(n+3)z^n+3\varpi\geq 0$, there are not closed critical surfaces.
\end{proposition}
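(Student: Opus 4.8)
The plan is to exploit a scaling (Pohozaev-type) argument. Since $\Sigma$ is closed it is compact without boundary, so every smooth one-parameter family of immersions through $X$ is an admissible variation and the first variation of $E$ along it must vanish by criticality. Rather than decompose a general variation field into normal and tangential parts, I would test criticality against the dilation family $Y(\cdot,\lambda)=\lambda X$ for $\lambda$ near $1$, whose image is the scaled surface $\lambda\Sigma$ bounding the scaled domain $\lambda\Omega$, and compute $\frac{d}{d\lambda}\big|_{\lambda=1}E[\lambda\Sigma]$ directly from the volume form \eqref{energy} of the energy. This is legitimate because $\lambda\mapsto\lambda X$ is a genuine smooth path of immersions passing through $X$ at $\lambda=1$.

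The heart of the computation is to record how each term rescales. Under $p\mapsto\lambda p$ the surface area picks up a factor $\lambda^2$, so $\mathcal{A}[\lambda\Sigma]=\lambda^2\mathcal{A}[\Sigma]$, and the enclosed volume picks up a factor $\lambda^3$, so $\mathcal{V}[\lambda\Omega]=\lambda^3\mathcal{V}[\Omega]$. For the potential term I would change variables by the dilation, whose Jacobian is $\lambda^3$ and which sends the height $z$ to $\lambda z$, obtaining $\int_{\lambda\Omega}f(z)\,dV=\lambda^3\int_\Omega f(\lambda z)\,dV$. Differentiating the resulting expression $E[\lambda\Sigma]=\sigma\lambda^2\mathcal{A}[\Sigma]+\eta\lambda^3\int_\Omega f(\lambda z)\,dV+\varpi\lambda^3\mathcal{V}[\Omega]$ at $\lambda=1$, and using $\frac{d}{d\lambda}\big|_{\lambda=1}f(\lambda z)=zf'(z)$, gives
\[
\frac{d}{d\lambda}\Big|_{\lambda=1}E[\lambda\Sigma]=2\sigma\mathcal{A}[\Sigma]+\eta\int_\Omega\big(zf'(z)+3f(z)\big)\,dV+3\varpi\mathcal{V}[\Omega].
\]
Setting the left-hand side to zero by criticality yields the claimed identity.

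For the particular case $f(z)=z^n$ I would substitute $zf'(z)+3f(z)=(n+3)z^n$ and recall that, by the divergence theorem applied to $W=ze_3$, the enclosed volume satisfies $\mathcal{V}[\Omega]=\int_\Omega dV$. The identity then reads $2\sigma\mathcal{A}[\Sigma]+\int_\Omega\big(\eta(n+3)z^n+3\varpi\big)\,dV=0$. Since $\sigma>0$ and $\mathcal{A}[\Sigma]>0$, the first term is strictly positive, so if the integrand satisfies $\eta(n+3)z^n+3\varpi\ge 0$ on $\Omega$ the whole left-hand side is strictly positive, a contradiction; hence no closed critical surface can exist. I expect the only delicate point to be the justification that the dilation family is a legitimate variation for testing criticality — this is precisely where compactness of $\Sigma$ is essential, since it guarantees that the generating position field $X$ is compactly supported and that its tangential part merely reparametrizes $\Sigma$ without altering the geometric energy $E$.
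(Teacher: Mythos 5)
Your proposal is correct and follows essentially the same route as the paper: testing criticality against the dilation family $\lambda\mapsto\lambda\Sigma$, computing the scaling of each term in the energy, and differentiating at $\lambda=1$. The only difference is that you spell out the admissibility of the dilation variation and the sign argument for the second statement in more detail than the paper, which simply asserts them.
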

\begin{proof}
Consider a rescaling of the surface $\Sigma\mapsto r\Sigma$ for $r>0$. Then, the energy of $r\Sigma$ is given by
$$E[r\Sigma]=\sigma r^2\mathcal{A}[\Sigma]+\eta\int_\Omega f(rz)r^3 dV+\varpi r^3\mathcal{V}[\Omega]\,.$$
Thus, since $\Sigma$ ($r=1$) is a critical surface, differentiating with respect to the rescaling parameter, we get
$$0=\frac{dE}{dr}[\Sigma]=2\sigma\mathcal{A}[\Sigma]+\eta\int_\Omega\left(zf'(z)+3f(z)\right)dV+3\varpi\mathcal{V}[\Omega]\,,$$
proving the result. The second statement follows directly.
\end{proof}

If we also seek embedded surfaces, existence is even more restricted. The problem of finding closed embedded surfaces satisfying \eqref{EL} is motivated by the classical Alexandrov's result which asserts that the only embedded closed constant mean curvature surface is the round sphere (\cite{Alexandrov}). In the case that $f(z)=z$, there are not closed embedded surfaces (\cite{KP}). These results are generalized for suitable choices of $f(z)$ in the following proposition.

\begin{proposition}\label{pr-closed} If $f(z)$ is increasing (or decreasing) almost everywhere, there are not closed embedded surfaces whose mean curvature $H$ satisfies $2\sigma H=\eta f(z)+\varpi$. In particular, the result holds for $f(z)=z^n$ and $n$ not even.
\end{proposition}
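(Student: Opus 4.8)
The plan is to adapt Alexandrov's method of moving planes, using horizontal planes $P_t = \{z = t\}$ rather than arbitrary ones. The decisive difference from the classical constant mean curvature situation is that equation \eqref{EL} is invariant under horizontal translations and under reflections across vertical planes, but \emph{not} under reflection across a horizontal plane: reflecting across $P_{t_0}$ sends the prescribed value $\Phi(z) := (\eta f(z) + \varpi)/(2\sigma)$ to $\Phi(2t_0 - z)$. Thus a reflected piece of $\Sigma$ no longer solves the original equation but a tilted one, and the monotonicity of $f$ is exactly what controls the direction of this discrepancy. Accordingly, the reflection argument will not terminate in a symmetry statement (as it does for the round sphere) but in a strict comparison that is incompatible with tangency.

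Concretely, suppose $\Sigma$ is closed and embedded, bounding a bounded domain $\Omega$, and assume (after possibly reflecting the whole picture) that $f$ is increasing. Starting with $t$ just below the maximum height of $\Sigma$, I would reflect the upper cap $\Sigma \cap \{z > t\}$ across $P_t$ and decrease $t$ until the first critical value $t_0$ at which the reflected cap $\Sigma_t^*$ either becomes internally tangent to the lower portion of $\Sigma$ at an interior point $p$ with $z(p) < t_0$, or meets $\Sigma$ non-transversally along the horizontal tangency curve lying on $P_{t_0}$. Embeddedness and compactness guarantee that such a $t_0$ is reached. At the contact point $p$, of height $z_0 \le t_0$, the lower portion carries mean curvature $\Phi(z_0)$ while the reflected cap carries the value $\Phi(2t_0 - z_0)$ inherited from the original height $2t_0 - z_0 \ge z_0$ of its preimage. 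Writing both surfaces as graphs over their common tangent plane at $p$, the fact that one lies to one side of the other forces an inequality between these two mean curvatures through the comparison form of the strong maximum principle; since $f$ is strictly increasing and $2t_0 - z_0 > z_0$ whenever $z_0 < t_0$, one has $\Phi(2t_0 - z_0) \neq \Phi(z_0)$, and this strict inequality is of the opposite sense to the one the maximum principle demands, producing the desired contradiction.

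The interior tangency is handled by the Hopf strong maximum principle, and the tangency along the curve on $P_{t_0}$ by the boundary-point (Hopf) lemma, both applied to the difference of the two graph functions, which satisfies a linear uniformly elliptic differential inequality with no zeroth-order term. The step I expect to require the most care is the orientation and sign bookkeeping: one must fix the unit normal consistently on $\Sigma$ and on the reflected cap so that the monotonicity of $f$, the sign of $\eta$, and the side on which the reflected cap lies all combine to give a strict inequality of the correct sense; a mismatch here would merely reproduce symmetry rather than a contradiction. Two further points deserve attention. First, the hypothesis that $f$ is monotone only \emph{almost everywhere} should still suffice, since it continues to prevent the difference of the graph functions from vanishing identically. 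Second, the exclusion of even $n$ in the application to $f(z) = z^n$ is sharp precisely because $z^n$ is then symmetric about $z = 0$, so that reflection across $\{z = 0\}$ preserves \eqref{EL} and the same argument yields a symmetry statement instead of nonexistence.
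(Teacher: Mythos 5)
Your route is genuinely different from the paper's. The paper disposes of the statement with a two\-/line flux identity: applying the divergence theorem to the field $(\eta f(z)+\varpi)e_3$ on the enclosed domain $\Omega$ gives $\eta\int_\Omega f'(z)\,dV=\int_\Sigma(\eta f(z)+\varpi)\langle\nu,e_3\rangle\,d\Sigma$, while \eqref{EL} together with $\sigma\Delta\langle X,e_3\rangle=2\sigma H\langle\nu,e_3\rangle$ forces that same surface integral to vanish on a closed surface; since $f'$ has a.e.\ constant nonzero sign and $\eta\neq0$, the volume integral is nonzero, a contradiction. This is shorter, avoids the maximum principle entirely, and uses monotonicity only through the sign of $\int_\Omega f'$. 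Your moving\-/planes scheme is the classical alternative (in the spirit of the Wente and Serrin symmetry results the paper cites) and can be pushed through, but it is substantially heavier and, as written, has a gap.

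The gap is in the boundary\-/tangency alternative. Your contradiction is extracted from the strict inequality $\Phi(2t_0-z_0)\neq\Phi(z_0)$, which, as you note, requires $z_0<t_0$. At an interior tangency point strictly below $P_{t_0}$ this does work: with the orientations set up correctly, second\-/order tangency of the two ordered graphs already forces an inequality between $\Phi(z_0)$ and $\Phi(2t_0-z_0)$ opposite to the one given by strict monotonicity. But in the other alternative of the Alexandrov process the first contact occurs at a point of $\Sigma\cap P_{t_0}$ with vertical tangent plane, where $z_0=t_0$ and $\Phi(2t_0-z_0)=\Phi(z_0)$ identically, so no inequality of any sense is available at the contact point. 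There the Hopf boundary\-/point lemma does not produce a contradiction directly; it produces $w\equiv0$, i.e.\ local coincidence of the reflected cap with the lower surface, which must then be propagated by a connectedness argument to global symmetry of $\Sigma$ about $P_{t_0}$, and only afterwards does strict a.e.\ monotonicity of $\eta f$ over the nondegenerate height range of the upper cap yield the contradiction $\Phi(z)=\Phi(2t_0-z)$ for all such $z$. You need to add this second stage. Two smaller corrections: the linearized inequality for $w$ is \emph{not} free of zeroth\-/order terms, since the heights of the two graphs over the same base point differ by a multiple of $w$ and contribute $\Phi(z_1)-\Phi(z_2)=c(x)w$ with $c$ of uncontrolled sign; this is harmless only because $w\geq0$ lets you discard $c^{+}w$ and apply the strong minimum principle with coefficient $-c^{-}\leq0$. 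And the normalization should be imposed on the monotonicity of $\eta f$ (equivalently of $\Phi$), not of $f$ alone, since $\eta$ may have either sign; that choice is what decides whether you sweep from above or from below.
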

\begin{proof} By contradiction, suppose that $\Sigma$ is a closed embedded surface in $\mathbb{R}^3$ with mean curvature $H$ satisfying \eqref{EL} and denote by $\Omega\subset\mathbb{R}^3$ the open domain bounded by $\Sigma$. The divergence of  the vector field $\xi=(\eta f(z)+\varpi)e_3$ defined in $\mathbb{R}^3$ is $\eta f'(z)$. Hence the divergence theorem implies
\begin{equation}\label{eq-la1}
\eta\int_\Omega f'(z)\,dV=\int_\Sigma \left(\eta f(z)+\varpi\right)\langle\nu,e_3\rangle\,d\Sigma\,,
\end{equation}
where $\nu$ is the unit normal vector field on $\Sigma$ pointing outwards $\Omega$. The left-hand side of \eqref{eq-la1} is nonzero by the assumption that $f(z)$ is increasing almost everywhere (respectively decreasing) and $\eta\neq 0$. On the other hand, computing the Laplacian with respect to the metric induced on $\Sigma$ of the height function $z=\langle X,e_3\rangle$,   we know
$$\sigma\Delta\langle X,e_3\rangle=2\sigma H\langle\nu,e_3\rangle=\left(\eta f(z)+\varpi\right)\langle\nu,e_3\rangle\,,$$
where we have used the Euler-Lagrange equation \eqref{EL}. Since $\Sigma$ is a closed surface, and using \eqref{eq-la1}, we get
$$0=\sigma \int_\Sigma\Delta\langle X,e_3\rangle\, d\Sigma=\int_\Sigma \left(\eta f(z)+\varpi\right)\langle \nu,e_3\rangle\,d\Sigma=\eta \int_\Omega f'(z)\, d V\neq 0\,,$$ 
obtaining a contradiction. The last statement is immediate.  
\end{proof}

\section{Critical cylinders}\label{3}

Among surfaces critical for the energy $E[\Sigma]$, a special type which merits further investigation are cylinders. As explained in the introduction, these surfaces extend the classical problem of the \emph{lintearia} ($f(z)=z$) to the consideration of more sophisticated vertical potential energies.

The purpose of this section is to study equilibrium immersions invariant under translations. If the surface is invariant in the direction of a unit vector $v\in \mathbb{R}^3$, then $\Sigma$ can be parameterized as 
\begin{equation}\label{param0}
X(s,t)=\gamma(s)+t v\,,
\end{equation}
where $\gamma(s)$ is a planar curve called the generating curve and contained in an orthogonal plane to $v$. The parameter $s\in I\subset\mathbb{R}$ denotes the arc length parameter of $\gamma$. These surfaces are referred as to {\it right cylinders} shaped on the curve $\gamma$.

Denote by $T(s)=\gamma'(s)$ the unit tangent vector field along the planar curve $\gamma(s)$, where $\left(\,\right)'$ represents the derivative with respect to the arc length parameter $s$, and define the unit normal vector field $N(s)$ along $\gamma(s)$ to be the counter-clockwise rotation of $T(s)$ through an angle $\pi/2$ in the plane where $\gamma(s)$ lies, i.e., $N(s)=JT(s)$. In this setting, the Frenet-Serret equation
$$T'(s)=\kappa(s)N(s)\,,$$
defines the (signed) curvature $\kappa(s)$ of $\gamma(s)$.

The unit normal $\nu$ to the right cylinder $X$ parameterized as \eqref{param0}  is $\nu(s,t)=N(s)$ and the mean curvature  is  $H(s,t)= \kappa(s)/2$. Then the equilibrium condition \eqref{EL} is
\begin{equation}\label{ELk}
\sigma\kappa(s)=\eta f\left(z(s,t)\right)+\varpi\,.
\end{equation}
Here $z(s,t)$ is the $z$-coordinate of $X(s,t)$,  
$$z(s,t)=\langle X(s,t),e_3\rangle=\langle \gamma(s),e_3\rangle+t\langle v,e_3\rangle\,.$$
Differentiating \eqref{ELk} with respect to $t$, we have $0=\eta f'(z)\langle v,e_3\rangle$, hence $\langle v,e_3\rangle=0$ (recall that we are assuming $\eta\neq 0$ and $f(z)$ nonconstant). Therefore the rulings of $\Sigma$ are orthogonal to the vertical direction $e_3$ which, after a rotation about the direction $e_3$ (this does not carry any change on $f(z)$), we may assume parallel to $e_1=(1,0,0)$. Consequently, $\Sigma$ can be parameterized as 
\begin{equation}\label{param}
X(s,t)=\gamma(s)+t e_1\,,
\end{equation}
where now $\gamma(s)=\left(0,y(s),z(s)\right)$ is a planar curve contained in the $yz$-plane.

From now on we will consider planar curves $\gamma\colon I\subset\r\rightarrow\r^2$ and we will denote $\gamma(s)=(y(s),z(s))$ the coordinate functions of $\gamma$. We will also restrict ourselves to the cases $f(z)=z^n$ for any real number $n\not=0$.  Then \eqref{ELk} reads
\begin{equation}\label{zn0}
\sigma \kappa=\eta\, z^n+\varpi\,,
\end{equation}
where $z^n=z(s)^n$. We show that  the energy parameters $\sigma$ and $\eta$ can be fixed after reparameterizations and dilations of $\gamma$. Indeed, if $\gamma$ satisfies \eqref{zn0}, reversing the orientation of $\gamma$, the curve $\widetilde{\gamma}(s)=\gamma(-s)$ satisfies \eqref{zn0} by reversing the signs of $\eta$ and ${\varpi}$. Similarly, the dilation $\widehat{\gamma}(rs)=r\gamma(s)$ rescales the parameters $\eta$ and ${\varpi}$ by $\eta r^{-n-1}$ and $\varpi r^{-1}$, respectively. After these simplifications, throughout this paper we will use the following definition.
 
\begin{definition} An arc length parameterized planar curve $\gamma(s)=(y(s),z(s))$ is a $n$-elastic curve, if its curvature $\kappa(s)$ satisfies
 \begin{equation}\label{zn}
\kappa(s)=z(s)^n+\mu\,,
\end{equation}
for some real constant $\mu$.
\end{definition}
With this definition, which fixes some of the energy parameters, and the choice of the function $f(z)=z^n$ the energy $E[\Sigma]$ reads 
\[E[\Sigma]= \mathcal{A}[\Sigma]+ \int_\Omega z^n\,dV+\mu\mathcal{V}[\Omega].\]
Equation \eqref{zn} can be seen as a prescribed curvature equation for planar curves. In the present case, the curvature depends on the distance to a fixed straight line. 

First, we analyze the curves with constant curvature that are solutions of \eqref{zn}. If $\kappa\not=0$ then $\gamma$ is a circle, which clearly does not satisfy \eqref{zn}. On the other hand, if $\kappa=0$ then $\gamma$ is a straight line and by \eqref{zn} it must be a horizontal line. We focus on this case in the following result.  

\begin{proposition}\label{lines} Let $\gamma$ be a curve whose constant curvature is a solution of \eqref{zn}. Then $\gamma$ is a horizontal straight line. Moreover: 
\begin{enumerate}
\item Case $\mu>0$. Then $\gamma$ is $z=\sqrt[n]{-\mu}$ and it exists if and only if $n$ is odd.
\item Case $\mu=0$. Then $\gamma$ is $z=0$.  
\item Case $\mu<0$. If $n$ is not even then $\gamma$ is $z=\sqrt[n]{-\mu}$ and, if $n$ is even $\gamma$ is one of the two straight lines $z=\pm\sqrt[n]{-\mu}$.
\end{enumerate}
\end{proposition}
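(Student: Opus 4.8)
The plan is to reduce the problem to a single algebraic equation in one unknown — the constant height of the line — and then run a case analysis governed by the sign of $\mu$ together with the parity of $n$. Almost all of the content sits in that final bookkeeping; the geometric reduction is short.

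First I would rule out nonzero constant curvature. A planar curve with constant curvature $\kappa\neq 0$ is an arc of a circle, and along such an arc the height $z(s)$ is nonconstant; since $n\neq 0$, the composition $z(s)^n$ is then nonconstant as well, so $z^n+\mu$ cannot be identically equal to the constant $\kappa$, contradicting \eqref{zn}. Hence $\kappa\equiv 0$ and $\gamma$ is a straight line. If this line were not horizontal, $z(s)$ would again be nonconstant and the same reasoning would contradict $0=z^n+\mu$. Therefore $\gamma$ is a horizontal line $z=c$ for some constant $c$, which is the first assertion. Setting $\kappa\equiv 0$ in \eqref{zn} now collapses the whole problem to the scalar equation
\[ c^n=-\mu, \]
and the remaining task is purely to decide, for each pair $(\mu,n)$, whether this equation has a real root $c$ in the domain on which $z\mapsto z^n$ is defined, and how many. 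Here I would keep two conventions explicit: for noninteger $n$ the power $z^n$ is defined only for $z>0$, so only positive heights are admissible, whereas for integer $n$ it is defined for all $z$ (for all $z\neq 0$ when $n<0$); and $\sqrt[n]{-\mu}$ denotes the real $n$th root, which for $\mu>0$ and odd $n$ is the negative root of the negative number $-\mu$.

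Then I would split into the three cases. If $\mu>0$ then $-\mu<0$, so $c^n=-\mu$ requires $c^n<0$, which happens precisely when $n$ is an odd integer; in that case $c=\sqrt[n]{-\mu}<0$ is the unique root, yielding case (1) together with its \emph{if and only if}. If $\mu=0$ the equation reads $c^n=0$, whose only solution is $c=0$, giving the line $z=0$ of case (2). If $\mu<0$ then $-\mu>0$ and we solve $c^n=-\mu>0$: when $n$ is not even (an odd integer or a noninteger) there is a single admissible root $c=\sqrt[n]{-\mu}>0$, while when $n$ is an even integer the two numbers $c=\pm\sqrt[n]{-\mu}$ both solve it, producing the two lines of case (3). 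A quick check on sample values (e.g.\ $n=-2$ with $\mu<0$, or $n=-1$ with either sign of $\mu$) confirms that the negative-$n$ instances fall out of the same formulas.

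The argument contains no deep step; the only place demanding care — and where I would be most explicit — is the bookkeeping over the domain of $z\mapsto z^n$ and the sign of $c^n$ as $n$ ranges over odd integers, even integers, and nonintegers. That is exactly what forces the equivalence ``$n$ odd'' in case (1) and the one-versus-two alternative in case (3), so I would state those domain conventions up front to make the counting unambiguous.
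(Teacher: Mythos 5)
Your proposal is correct and follows essentially the same route as the paper: rule out circles (constant $\kappa\neq 0$) since $z$ is nonconstant along them, conclude $\kappa\equiv 0$ and that the line must be horizontal, and then reduce to the scalar equation $c^n=-\mu$ whose solvability is decided by the sign of $\mu$ and the parity of $n$. The paper leaves the algebraic bookkeeping implicit, whereas you spell it out, including the domain conventions for $z\mapsto z^n$; this is a faithful, slightly more detailed version of the same argument.
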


For those planar curves with nonconstant curvature which are solutions of \eqref{zn} we will locally characterize them as planar critical curves for a curvature energy. We briefly recall here the general theory for curvature energies. Consider a general curvature energy functional
\begin{equation}\label{energycurve}
\mathbf{\Theta}[C]=\int_C P(\kappa)\,ds\,,
\end{equation}
where $P(\kappa)$ is a smooth function defined in an adequate domain. By standard computations involving integrating by parts we can calculate the first variation formula associated to $\mathbf{\Theta}$ (see details in \cite{P}) obtaining the Euler-Lagrange equation  
\begin{equation}\label{Jprima}
\left(\left(\kappa\dot{P}-P\right)T+\dot{P}_s\,N\right)'=0\,,
\end{equation}
where $\dot{P}(\kappa)$ denotes the derivative of $P(\kappa)$ with respect to $\kappa$. Curves whose curvature $\kappa(s)$ is a solution of \eqref{Jprima} are called {\it critical curves} throughout the paper, regardless of the boundary conditions. We introduce the vector field
\begin{equation}\label{J}
\mathcal{J}=\left(\kappa\dot{P}-P\right)T+\dot{P}_s\,N\,.
\end{equation}
From \eqref{Jprima} it is then clear that along a critical curve $C$, the vector field $\mathcal{J}$ is constant and, therefore, $\|  \mathcal{J}\|^2=d$ for some positive real constant $d$, represents a first integral of the Euler-Lagrange equation. Expanding it, we obtain
\begin{equation}\label{pss}
\dot{P}_s^2+(\kappa\dot{P}-P)^2=d\,.
\end{equation}
We next prove a result characterizing critical curves for $\mathbf{\Theta}$ in terms of their parameterization. Here we will use Killing vector fields along curves in the sense of Langer and Singer (\cite{LS}).

\begin{proposition}\label{coord} Assume that $\dot{P}_s\neq 0$. An arc length parameterized planar curve $C(s)$ with curvature $\kappa(s)$ is critical for $\mathbf{\Theta}$ if and only if there is a coordinate system such that $C(s)=\left(C_1(s),C_2(s)\right)$ and
\begin{equation}\label{coord1}
C_2(s)=\frac{1}{\sqrt{d\,}}\dot{P}\left(\kappa(s)\right)
\end{equation}
for any constant $d>0$.
\end{proposition}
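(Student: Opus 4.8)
The plan is to extract the result from the constancy of the vector field $\mathcal{J}$ in \eqref{J}, which the excerpt has already established along critical curves. For the direct implication, suppose $C$ is critical, so that $\mathcal{J}$ is a constant vector with $\|\mathcal{J}\|^2=d$. Since $\dot{P}_s\neq 0$ its normal component never vanishes, whence $\mathcal{J}\neq 0$, and I can fix the coordinate frame $(C_1,C_2)$ by a rotation together with a translation so that $\mathcal{J}=-\sqrt{d}\,e_1$, where $e_1$ is the first coordinate direction. Writing $T=C'=(C_1',C_2')$ and $N=JT=(-C_2',C_1')$, the decomposition \eqref{J} gives $\langle\mathcal{J},N\rangle=\dot{P}_s$, while the choice of frame gives $\langle\mathcal{J},N\rangle=-\sqrt{d}\,\langle e_1,N\rangle=\sqrt{d}\,C_2'$. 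Equating the two and integrating once, with the additive constant absorbed into a vertical translation, yields exactly \eqref{coord1}, namely $C_2=\tfrac{1}{\sqrt d}\dot{P}(\kappa)$.

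For the converse I would reverse this computation. From \eqref{coord1} I differentiate to get $C_2'=\tfrac{1}{\sqrt d}\dot P_s$; the Frenet relations $C_2''=\kappa C_1'$ and $C_1''=-\kappa C_2'=-\tfrac{1}{\sqrt d}(\kappa\dot P-P)_s$ (using $(\kappa\dot P-P)_s=\kappa\dot P_s$) then integrate to $C_1'=-\tfrac{1}{\sqrt d}(\kappa\dot P-P)+a$ for some constant $a$. Substituting these expressions for $C_1'$ and $C_2'$ into the unit-speed identity $C_1'^2+C_2'^2=1$, differentiating, and then cancelling the factor $\dot P_s\neq 0$ (this is precisely where the standing hypothesis is used), I arrive at
\[\dot P_{ss}+\kappa\left(\kappa\dot P-P\right)=\sqrt{d}\,a\,\kappa\,.\]
The left-hand side is exactly the normal component of $\mathcal{J}'$, its tangential component vanishing identically, so $C$ solves the Euler-Lagrange equation \eqref{Jprima} and is critical precisely when $a=0$.

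The hard part is therefore the vanishing of this integration constant $a$, and I expect this to be the one step needing genuine care rather than routine manipulation. It is not removable by a translation, and $a\neq 0$ is not vacuous: the displayed identity is the Euler-Lagrange equation of the modified energy $\mathbf{\Theta}+\sqrt{d}\,a\int_C ds$, whose critical curves also satisfy \eqref{coord1}, because adding a constant to $P$ leaves $\dot P$ unchanged. The resolution I would adopt is to read the $d$ of \eqref{coord1} as the first-integral constant of \eqref{pss}, i.e. to feed in $\dot P_s^2+(\kappa\dot P-P)^2=d$; substituting this into the unit-speed identity collapses it to $a\bigl(a\sqrt d-2(\kappa\dot P-P)\bigr)=0$, and since $\kappa\dot P-P$ is nonconstant (its derivative is $\kappa\dot P_s$, which cannot vanish identically when $\dot P_s\neq 0$ and $\kappa\not\equiv 0$), this forces $a=0$, hence $\mathcal{J}=-\sqrt d\,e_1$ and criticality. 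Making this normalization explicit, so that the constant in \eqref{coord1} is unambiguously the one in \eqref{pss}, is the ingredient that upgrades the direct implication to the full equivalence.
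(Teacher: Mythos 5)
Your proof is correct and, in the converse direction, takes a genuinely different and more transparent route than the paper's. The forward implications essentially coincide: both normalize the constant vector $\mathcal{J}$ to a horizontal vector of length $\sqrt{d}$ by a rigid motion; the paper reads off $C_1'$ from the tangential component $\kappa\dot{P}-P=\langle T,\mathcal{J}\rangle$ and then recovers $C_2'$ from unit speed together with \eqref{pss}, whereas you read off $C_2'$ directly from the normal component $\dot{P}_s=\langle N,\mathcal{J}\rangle$, which is slightly cleaner and also justifies $d>0$. For the converse the paper introduces the auxiliary curve $\widetilde{C}=\frac{1}{\sqrt{d}}\left(\int(\kappa\dot{P}-P)\,ds,\dot{P}\right)$, asserts it is arc length parameterized with curvature $\kappa$ and satisfies \eqref{Jprima}, and concludes via the Fundamental Theorem of Planar Curves; you instead integrate the Frenet equations directly and isolate the integration constant $a$ in $C_1'=-\frac{1}{\sqrt{d}}(\kappa\dot{P}-P)+a$. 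Your observation that $a$ need not vanish is correct and substantive: a curve critical for $\mathbf{\Theta}+c\int_C ds$ with $c\neq 0$ satisfies \eqref{coord1} for its own first-integral constant without being critical for $\mathbf{\Theta}$ (for $P=\kappa^2$ this is exactly a $\lambda$-elastica with $\lambda\neq 0$), so the equivalence does require reading the $d$ of \eqref{coord1} as the constant of \eqref{pss}. The paper's converse hides this very point in the unproved assertion that $\widetilde{C}$ is unit speed --- a condition equivalent to \eqref{pss} and hence, through the Frenet equations, to the Euler--Lagrange equation one is trying to establish --- so your explicit treatment of $a$ is not only a different route but a repair of a real gap; within the family of energies of Theorem \ref{crit} the ambiguity is harmless because the additive constant $\lambda$ is already a free parameter there. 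One cosmetic remark: you need not invoke $\kappa\not\equiv 0$ as a separate hypothesis at the end, since $\dot{P}_s=\ddot{P}(\kappa)\,\kappa'\neq 0$ already forces $\kappa$ to be nonconstant, so $a\kappa\equiv 0$ yields $a=0$ directly.
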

\begin{proof} Let $C(s)=\left(C_1(s),C_2(s)\right)$ be a planar critical curve for $\mathbf{\Theta}$. The vector field $\mathcal{J}$ is a Killing vector field along $C$ which can be uniquely extended to a Killing vector field on the whole space $\mathbb{R}^2$. Since $\lVert \mathcal{J}\rVert^2=d$ is constant,   the extension of $\mathcal{J}$ to $\mathbb{R}^2$ (also denoted by $\mathcal{J}$) is a translational Killing vector field. After a rigid motion if necessary, we can assume that $\mathcal{J}=(\sqrt{d\,},0)$. Next, from \eqref{Jprima} we obtain
$$\kappa\dot{P}-P=\langle T, \mathcal{J}\rangle=\langle\left(C_1',C_2'\right),(\sqrt{d\,},0)\rangle=\sqrt{d}\,C_1'\,,$$
so that
$$C_1'(s)=\frac{1}{\sqrt{d\,}}\left(\kappa\dot{P}-P\right).$$
Finally, we use that $C(s)$ is parameterized by arc length and that \eqref{pss} is satisfied to conclude that
$$C_2'(s)=\frac{1}{\sqrt{d\,}}\dot{P}_s\,.$$
After integrating and translating, if necessary, we obtain the forward implication.

For the reverse implication, assume that $C(s)=\left(C_1(s),C_2(s)\right)$ is a planar curve parameterized by arc length and such that \eqref{coord1} holds for some $d>0$, where $\kappa(s)$ denotes its curvature. We consider the arc length parameterized curve
$$\widetilde{C}(s)=\frac{1}{\sqrt{d\,}}\left(\int\left(\kappa\dot{P}-P\right)ds,\dot{P}\right),$$
which is critical for $\mathbf{\Theta}$ since it satisfies \eqref{Jprima}. The curvature of $\widetilde{C}(s)$ locally coincides with the curvature of $C(s)$, $\kappa(s)$. Therefore, by the Fundamental Theorem of Planar Curves, $C(s)=\widetilde{C}(s)$, after a rigid motion. Consequently, $C(s)=\widetilde{C}(s)$ is critical for $\mathbf{\Theta}$.\end{proof}

Observe that the restriction $\dot{P}_s\neq 0$ is quite natural for our purposes. Indeed, since we are assuming that $\kappa$ is not constant, $\dot{P}_s=0$ if and only if $P(\kappa)=a\kappa+b$, for real constants $a$ and $b$. If $b=0$, $\mathbf{\Theta}$ represents the total curvature whose associated Euler-Lagrange equation is an identity. On the contrary, if $b\neq 0$, critical curves for $\mathbf{\Theta}$ are straight lines ($\kappa=0$), which are out of our consideration.

Using Proposition \ref{coord}, we prove the main result of this section.

\begin{theorem}\label{crit} Let $\gamma$ be a planar curve with nonconstant curvature. Then $\gamma$ is a $n$-elastic curve if and only if it satisfies the Euler-Lagrange equation associated to the curvature energies:
\begin{eqnarray*}
\mbox{Case $n\not=-1$:} && \mathbf{\Theta}[\gamma]=\int_\gamma \left(\left(\kappa-\mu\right)^p+\lambda\right)ds\,,\\
\mbox{Case $n=-1$:}& & \mathbf{\widetilde{\Theta}}[\gamma]=\int_\gamma \left(\log\left(\kappa-\mu\right)+\lambda\right)ds\,,
\end{eqnarray*}
where $p=(n+1)/n$ and $\lambda\in\mathbb{R}$. (If $p\in\r\setminus\n$ or $n=-1$, above energies must be understood as acting on spaces of curves satisfying $\kappa>\mu$.) 
\end{theorem}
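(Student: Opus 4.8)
The strategy is to read the defining relation \eqref{zn} of an $n$-elastic curve as the parameterization condition \eqref{coord1} of Proposition \ref{coord}, with the height function $z$ playing the role of the coordinate $C_2$. For an energy $\mathbf{\Theta}[\gamma]=\int_\gamma P(\kappa)\,ds$, condition \eqref{coord1} reads $z=C_2=\tfrac{1}{\sqrt d}\dot P(\kappa)$, while \eqref{zn} reads $z=(\kappa-\mu)^{1/n}$; the two coincide exactly when $\dot P(\kappa)$ is a constant multiple of $(\kappa-\mu)^{1/n}$. Hence the first step is to recover $P$ by integrating $\dot P(\kappa)\propto(\kappa-\mu)^{1/n}$ in $\kappa$. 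Since $\tfrac1n+1=p$, an antiderivative is a multiple of $(\kappa-\mu)^{p}$ when $p\neq0$, i.e.\ $n\neq-1$, and a multiple of $\log(\kappa-\mu)$ in the limiting exponent $\tfrac1n+1=0$, i.e.\ $n=-1$; this reproduces exactly the dichotomy of the statement. The additive constant of integration becomes the free parameter $\lambda$, and an overall positive factor is immaterial since it does not alter the critical curves.

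For the forward implication I would take $P(\kappa)=(\kappa-\mu)^{p}+\lambda$ (respectively $P=\log(\kappa-\mu)+\lambda$ when $n=-1$) and use \eqref{zn} to compute $\dot P(\kappa)=p(\kappa-\mu)^{1/n}=pz$ (respectively $\dot P=(\kappa-\mu)^{-1}=z$). Thus the height coordinate of an $n$-elastic curve satisfies $z=\tfrac{1}{\sqrt d}\dot P(\kappa)$ with $d=p^{2}$ (respectively $d=1$), which is \eqref{coord1}; choosing $\lambda=py'+\tfrac1n z^{n+1}+p\mu z$, which one verifies is constant along an $n$-elastic curve, makes $\lVert\mathcal J\rVert^{2}$ constant and equal to $p^{2}$, so that the first integral \eqref{pss} holds and Proposition \ref{coord} yields criticality. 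Conversely, if $\gamma$ is critical, Proposition \ref{coord} supplies coordinates in which $C_2=\tfrac{1}{\sqrt d}\dot P(\kappa)=\tfrac{p}{\sqrt d}(\kappa-\mu)^{1/n}$; reading $C_2$ as the height $z$ and inverting gives $\kappa=(\sqrt d/p)^{n}z^{n}+\mu$, which is \eqref{zn} up to the positive coefficient $(\sqrt d/p)^{n}$.

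The point that needs the most care is this multiplicative coefficient. In the forward direction one is free to select $d=p^{2}$, but in the reverse direction Proposition \ref{coord} fixes $d=\lVert\mathcal J\rVert^{2}$, so the coefficient is genuinely $(\sqrt d/p)^{n}$; normalizing it to $1$ is precisely achieved by the dilation $\widehat\gamma(rs)=r\gamma(s)$ recorded before the definition, which rescales curvature and height oppositely, and this is the only substantive bookkeeping in the argument. The remaining care concerns domains and the hypotheses of Proposition \ref{coord}: for $p\notin\mathbb{N}$ and for $n=-1$ the expressions $(\kappa-\mu)^{p}$, $\log(\kappa-\mu)$ and the root $(\kappa-\mu)^{1/n}$ are defined only where $\kappa-\mu=z^{n}>0$, which forces the parenthetical restriction to curves with $\kappa>\mu$; and the standing assumption $\dot P_s\neq0$ holds because $\kappa$ is nonconstant and $\ddot P=p(p-1)(\kappa-\mu)^{p-2}\neq0$ for every admissible $n\neq-1$ (respectively $\ddot P=-(\kappa-\mu)^{-2}\neq0$ when $n=-1$).
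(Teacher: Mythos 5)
Your argument is correct and follows essentially the same route as the paper: both directions reduce to Proposition \ref{coord} by identifying the height with $\tfrac{1}{\sqrt d}\dot P(\kappa)$, taking $\dot P(\kappa)\propto(\kappa-\mu)^{1/n}$ and integrating to obtain the power (resp.\ logarithmic) energy, with the reverse direction normalized by a dilation. Your explicit choice of $\lambda$ enforcing the first integral \eqref{pss} (which agrees with \eqref{lambda}) and your remark on the dilation bookkeeping only make explicit what the paper's proof leaves implicit.
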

\begin{proof} For the forward implication, from \eqref{zn} we have that $z(s)=\left(\kappa(s)-\mu\right)^{1/n}$. If we take $\dot{P}(\kappa)=(\kappa-\mu)^{1/n}$, the condition \eqref{coord1} of Proposition \ref{coord} is satisfied, concluding after integrating that $\gamma$ is critical for the energies of the statement.

For the converse, we consider first the case $n\neq -1$. Let $\gamma(s)=\left(\gamma_1(s),\gamma_2(s)\right)$ be an arc length parameterized planar critical curve for $\mathbf{\Theta}$. From Proposition \ref{coord} we know that there exists a coordinate system in which $\gamma$ can be parameterized as
\begin{equation}\label{pc1}
\gamma(s)=\frac{1}{\sqrt{d\,}}\left(\int \left(\left(\kappa-\mu\right)^{p-1}\left((p-1)\kappa+\mu\right)-\lambda\right)ds,p\left(\kappa-\mu\right)^{p-1}\right),
\end{equation}
for some constant $d>0$. After a rigid motion, reflection and dilation, if necessary, we may assume that $\gamma_2(s)=z(s)=(\kappa-\mu)^{p-1}$. Then, since $p=(n+1)/n$, \eqref{zn} is satisfied. 

A similar argument for the case $n=-1$ gives that a critical curve $\gamma$ for $\mathbf{\widetilde{\Theta}}$ can be parameterized as
\begin{equation}\label{pc2}
\gamma(s)=\frac{1}{\sqrt{d\,}}\left(\int\left(\frac{\kappa}{\kappa-\mu}-\log(\kappa-\mu)-\lambda\right)ds,\frac{1}{\kappa-\mu}\right),
\end{equation}
for some constant $d>0$. As before, we may assume $z(s)=(\kappa-\mu)^{-1}$ so that \eqref{zn} holds, obtaining the result.
\end{proof}

We give some observations for particular choices of the constants $\lambda$, $\mu$ and $p$ in Theorem \ref{crit}:
\begin{enumerate}
\item Case $\lambda=\mu=0$ and $p=2$. Here we recover the classical bending energy of curves, which corresponds with right cylinders that are critical for $E[\Sigma]$ for $n=1$ and $\varpi=0$. This relation was pointed out in \cite{KP2}, where the authors acknowledge the comments of Prof. O. J. Garay. 
\item  Case  $\lambda=\mu=0$ and $p=1/2$. This energy was studied by Blaschke in 1930 (\cite{Bl}) obtaining that critical curves are catenaries. 
\item More generally, the case $\lambda=0$, $p=1/2$ and $\mu\in\mathbb{R}$ gives rise to roulettes of conic foci (\cite{AGP}). These cases correspond with the choice $n=-2$ in \eqref{zn}. Planar curves satisfying \eqref{zn} for $n=-2$ have been studied in \cite{I,M}, although this variational characterization was not described. 
\item Case $\lambda=0$ and $\mu, p\in\r$. This case was used in the characterization of rotational linear Weingarten surfaces in \cite{LP}, where the authors gave a full classification of the critical curves.  
\end{enumerate}

From the proof of Theorem \ref{crit}, curves satisfying \eqref{zn} can be parameterized, up to rescaling and change of orientation, as \eqref{pc1} which combined with \eqref{zn} yields
\begin{equation}\label{pc11}
\gamma(s)=\left(-\int_0^{s}\left(\frac{1}{n+1}z(s)^{n+1}+\mu z(s)-\frac{n}{n+1}\lambda\right)ds,z(s)\right)
\end{equation}
when $n\neq -1$, while for the case $n=-1$ the parameterization of $\gamma(s)$ is \eqref{pc2} and, once again, combining it with \eqref{zn}, 
\begin{equation}\label{pc22}
\gamma(s)=\left(-\int_0^{s}\left(\mu z(s)+\log z(s)+1-\lambda\right)ds,z(s)\right).
\end{equation}
We highlight here that these last two parameterizations are given in terms of just one quadrature. In fact, we can combine \eqref{pss} with the energies given in Theorem \ref{crit}, to make a change of variable in the integral of the parameterizations and, hence, obtaining locally a graph which can be recovered after just one quadrature.

Another observation of this variational approach and parameterizations is that curves satisfying \eqref{zn} are theoretically characterized as solutions of a first order ordinary differential equation. Indeed, we have the following result.

\begin{proposition}\label{integralequation} Let $\gamma(s)=\left(y(s),z(s)\right)$ be a curve parameterized by arc length. Then $\gamma$ is a $n$-elastic curve if and only if $z(s)$ satisfies the first order ordinary differential equation:
\begin{eqnarray}
\mbox{Case $n\not=-1$:} && \frac{1}{n+1}z^{n+1}+\mu z+\epsilon \sqrt{1-\left(z'(s)\right)^2}=c\,,\label{ode1}\\
\mbox{Case $n=-1$:}& & \log z+\mu z+\epsilon \sqrt{1-\left(z'(s)\right)^2}=c\,.\label{ode2}
\end{eqnarray}
In both cases, $\epsilon=\pm 1$ and $c$ is a suitable real constant.
\end{proposition}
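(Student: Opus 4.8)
The plan is to reduce the statement to a single first integral obtained by differentiating the proposed left-hand side along the curve. Since $\gamma$ is parameterized by arc length, I would write its unit tangent as $T=\gamma'=(y',z')$ with $(y')^2+(z')^2=1$, so that the unit normal is $N=JT=(-z',y')$ and the Frenet--Serret equation $T'=\kappa N$ yields the two scalar identities $y''=-\kappa z'$ and $z''=\kappa y'$. The only nontrivial input is the relation between $y'$ and the radical appearing in \eqref{ode1}--\eqref{ode2}: from $(y')^2=1-(z')^2$ one has $\sqrt{1-(z')^2}=|y'|$, so on any maximal interval where the tangent is not vertical (that is, $y'\neq 0$) I would fix $\epsilon=\mathrm{sign}(y')=\pm 1$ and obtain $y'=\epsilon\sqrt{1-(z')^2}$.

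With this notation I would treat both cases at once by introducing the primitive $\Phi(z)$ with $\Phi'(z)=z^n$, namely $\Phi(z)=\frac{1}{n+1}z^{n+1}$ when $n\neq -1$ and $\Phi(z)=\log z$ when $n=-1$, and defining the auxiliary function $F(s)=\Phi(z)+\mu z+\epsilon\sqrt{1-(z')^2}=\Phi(z)+\mu z+y'$. Differentiating and substituting $y''=-\kappa z'$ gives, in one line, $F'(s)=\Phi'(z)z'+\mu z'+y''=z'\bigl(z^n+\mu-\kappa\bigr)$. Both equations \eqref{ode1} and \eqref{ode2} are precisely the statement $F\equiv c$. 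For the forward implication I would simply note that if $\gamma$ is $n$-elastic then $\kappa=z^n+\mu$ by \eqref{zn}, so $F'\equiv 0$ and $F$ equals a constant $c$ on each such interval, which is the desired ODE. For the converse, assuming $F\equiv c$ gives $F'\equiv 0$, hence $z'(z^n+\mu-\kappa)=0$; on the open set where $z'\neq 0$ this forces $\kappa=z^n+\mu$, and since both sides are continuous the equality extends to the whole domain, so $\gamma$ is $n$-elastic.

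I expect the only real care to be in the bookkeeping of the sign $\epsilon$ and the constant $c$: $\epsilon=\mathrm{sign}(y')$ is locally constant but may flip across a vertical tangent $y'=0$, whereas $c$ is preserved there by continuity of $z$, since the radical vanishes at such points. The converse also needs the standing nonconstant-curvature hypothesis to guarantee that $z'$ does not vanish on an interval, since a horizontal segment ($z'\equiv 0$) is the degenerate case already isolated in Proposition \ref{lines}; on that set the equivalence would otherwise fail, as the ODE only constrains $z$ without forcing $z^n+\mu=0$.

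Finally, I note that the same identity can be read off from the development following Theorem \ref{crit}, giving an alternative and more intrinsic derivation. Combining the first integral \eqref{pss}, which under the parameterization of Proposition \ref{coord} reduces exactly to the arc-length condition $(y')^2+(z')^2=1$, with the relation $\kappa\dot{P}-P=\sqrt{d}\,y'$ and the normalization $z=(\kappa-\mu)^{1/n}$ recovers \eqref{ode1} after expanding $\kappa\dot{P}-P$, with the constant identified as $c=\tfrac{n}{n+1}\lambda$. I would lead with the direct Frenet computation, as it is self-contained and handles the case $n=-1$ uniformly, and mention this variational route only as a consistency check.
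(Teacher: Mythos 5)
Your proof is correct, but it takes a genuinely different route from the paper. The paper's proof is a two-line consequence of the variational machinery already developed: it reads off $y'(s)=-\tfrac{1}{n+1}z^{n+1}-\mu z+\tfrac{n}{n+1}\lambda$ directly from the one-quadrature parameterization \eqref{pc11} (respectively \eqref{pc22}), combines it with $y'^2+z'^2=1$, and identifies the constant as $c=n\lambda/(n+1)$ (respectively $c=\lambda-1$); the converse is left implicit, as the steps reverse. You instead give a self-contained first-integral argument from the Frenet equations, showing that $F(s)=\Phi(z)+\mu z+y'$ satisfies $F'=z'\left(z^n+\mu-\kappa\right)$, which proves both implications at once and treats $n=-1$ uniformly. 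What your approach buys is independence from Theorem \ref{crit} and Proposition \ref{coord}, plus an explicit treatment of two points the paper glosses over: the behavior of $\epsilon$ across vertical tangents, and the failure of the converse on horizontal segments with $z^n+\mu\neq 0$ (which is precisely why the paper's remark that the lines of Proposition \ref{lines} ``can be included'' needs the qualification you supply). What the paper's route buys is the identification of $c$ with the Lagrange multiplier $\lambda$, which is not cosmetic: it is used later, e.g.\ in the proof of Theorem \ref{t-closed}, where $\lambda=0$ forces $c=0$. Your closing consistency check recovers exactly this identification, so nothing is lost; just be aware that if your proof replaced the paper's, the relation $c=n\lambda/(n+1)$ would still need to be stated.
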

\begin{proof} As mentioned above, if $n\neq -1$ a $n$-elastic curve is   parameterized by  \eqref{pc11}.  From this equation we obtain that $$y'(s)=-\frac{1}{n+1}z(s)^{n+1}-\mu z(s)+\frac{n}{n+1}\lambda\,,$$
which combined with the arc length condition $y'(s)^2+z'(s)^2=1$ gives \eqref{ode1}, where $c=n\lambda/(n+1)$.

Similarly, if $n=-1$ we use \eqref{pc22} together with $y'(s)=\epsilon\sqrt{1-\left(z'(s)\right)^2}$ to conclude the result, for $c=\lambda-1$.
\end{proof}

Observe that the horizontal straight lines solution of \eqref{zn} (see Proposition \ref{lines}) can be included in the statement of this proposition.

From Proposition \ref{integralequation} we directly conclude some geometric properties of $n$-elastic curves. First, we observe that in the cases where $n\leq-1$ the curve cannot meet the line of equation $z=0$, since if that happens equation \eqref{ode1} (or equation \eqref{ode2}) is not well defined.

\begin{corollary}\label{n1} If $n\leq-1$, a $n$-elastic curve cannot meet the $y$-line. 
\end{corollary}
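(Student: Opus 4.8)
The plan is to argue by contradiction directly from the first-order characterization in Proposition \ref{integralequation}, which already reduces the question to the behavior of a single scalar ODE. Suppose a $n$-elastic curve $\gamma(s)=(y(s),z(s))$ with $n\leq -1$ meets the $y$-line, so that $z(s_0)=0$ for some parameter $s_0$ in its domain. By Proposition \ref{integralequation}, $z(s)$ satisfies \eqref{ode1} when $n\neq -1$ and \eqref{ode2} when $n=-1$, with the \emph{same} finite constant $c$ holding along the whole curve, in particular in the limit $s\to s_0$, where $z\to 0$.

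The key point is that every term on the left-hand side of these ODEs stays bounded as $z\to 0$ except the leading one, which is unbounded. Because $\gamma$ is parameterized by arc length, $(z')^2\leq 1$, so the radical term $\epsilon\sqrt{1-(z')^2}$ remains confined to $[-1,1]$; likewise $\mu z\to 0$. First I would treat the case $n=-1$: the remaining term is $\log z$, which tends to $-\infty$ as $z\to 0^+$, so the left-hand side of \eqref{ode2} cannot equal the finite constant $c$. Next, for $n<-1$ we have $n+1<0$, hence $z^{n+1}$ blows up as $z\to 0$ and the term $\frac{1}{n+1}z^{n+1}$ is therefore unbounded, again contradicting the equality in \eqref{ode1}. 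When $n$ is not an integer one only needs the one-sided limit $z\to 0^+$, since $z^n$ is real only for $z>0$; when $n$ is a negative integer the same divergence occurs from either side, so no hypothesis on the sign of $z$ is required.

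The hard part here is essentially nonexistent: the single point meriting care is confirming that the algebraic or logarithmic term is the \emph{only} unbounded contribution, which is immediate from the arc-length bound $|z'|\leq 1$ keeping the radical in $[0,1]$. Equivalently, and even more directly, one may phrase the whole argument without contradiction by simply observing that \eqref{ode1} and \eqref{ode2} are not well defined at $z=0$ for $n\leq -1$, so no solution can attain that value --- exactly as remarked in the paragraph preceding the statement.
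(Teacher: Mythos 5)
Your proof is correct and follows essentially the same route as the paper, which justifies Corollary \ref{n1} by the one-line observation that equations \eqref{ode1} and \eqref{ode2} from Proposition \ref{integralequation} are not well defined at $z=0$ when $n\leq-1$. Your fleshed-out version --- showing that the conserved quantity $c$ would have to be infinite because $\frac{1}{n+1}z^{n+1}$ (for $n<-1$) or $\log z$ (for $n=-1$) blows up while $\mu z$ and the radical stay bounded --- is a welcome strengthening, since it also rules out $z(s)\to 0$ in the limit at a finite endpoint of the maximal domain, which is precisely how the corollary is later invoked in the proof of Proposition \ref{pr1}.
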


Second, by letting $z\rightarrow\infty$ in \eqref{ode1} and \eqref{ode2}, we give conditions so that the function $z(s)$ is bounded.

\begin{corollary} \label{n2} If either $\mu\not=0$ or  $\mu=0$ and $n\geq -1$, the function $z(s)$ is bounded.
\end{corollary}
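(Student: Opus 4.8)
The plan is to read boundedness off directly from the first-order equations \eqref{ode1} and \eqref{ode2} of Proposition \ref{integralequation}, using the trivial bound $|z'(s)|\le 1$ coming from the arc length condition $y'^2+z'^2=1$. Since $0\le\sqrt{1-(z')^2}\le 1$, the term $\epsilon\sqrt{1-(z')^2}$ lies in $[-1,1]$, so isolating it and writing $G(z)=\tfrac{1}{n+1}z^{n+1}+\mu z$ when $n\neq -1$ and $G(z)=\log z+\mu z$ when $n=-1$, the equations become $G(z(s))=c-\epsilon\sqrt{1-(z')^2}$. Hence $G(z(s))$ is confined to $[c-1,c+1]$ for every $s$ in the domain of $\gamma$, and $z(s)$ will inherit boundedness as soon as we know that $G$ cannot stay bounded while $z$ escapes to infinity.

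This reduces the problem to an asymptotic statement about the single-variable function $G$: under the stated hypotheses, $|G(z)|\to\infty$ as $z\to\pm\infty$ along whichever end of its domain the curve actually reaches. Granting this, the corollary follows by contradiction. If $z(s)$ were unbounded, continuity would furnish a sequence $s_k$ with $|z(s_k)|\to\infty$, forcing $|G(z(s_k))|\to\infty$ and contradicting $G(z(s))\in[c-1,c+1]$.

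The asymptotic analysis I would carry out by comparing the two terms of $G$. If $\mu\neq 0$, then for $n>0$ the power $z^{n+1}$ dominates the linear term, so $|G(z)|\to\infty$, while for $n<0$ (including the logarithmic case $n=-1$) the power term is $o(z)$, so $G(z)\sim\mu z\to\pm\infty$; thus when $\mu\neq 0$ properness holds for every admissible $n$, driven by whichever term dominates. If instead $\mu=0$, then $G(z)=\tfrac{1}{n+1}z^{n+1}$ (or $\log z$ for $n=-1$), which diverges as $z\to+\infty$ exactly when $n+1>0$, i.e. $n>-1$, together with the borderline case $n=-1$; this is precisely the hypothesis $n\ge -1$. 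It is worth recording that for $\mu=0$ and $n<-1$ one instead has $z^{n+1}\to 0$, so $G$ remains bounded and the conclusion genuinely fails, which pins down the sharpness of the hypothesis.

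The step I expect to require the most care is bookkeeping the admissible range of $z$ across the different arithmetic natures of $n$: for noninteger $n$ the quantity $z^n$ is real only for $z>0$, for $n\le -1$ Corollary \ref{n1} already keeps $z$ away from the line $z=0$, and only for integer $n$ does the behavior as $z\to-\infty$ actually enter. The clean way to dispatch this is to observe that the curve can escape to infinity only along a half-line of $z$-values on which the asymptotics of the previous paragraph apply, so properness holds on exactly the relevant end(s); once this case distinction is organized, the contradiction argument closes the proof.
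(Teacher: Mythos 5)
Your proposal is correct and follows essentially the same route as the paper, which derives the corollary by letting $z\to\infty$ in \eqref{ode1} and \eqref{ode2}: since $\epsilon\sqrt{1-(z')^2}$ is bounded, the quantity $\frac{1}{n+1}z^{n+1}+\mu z$ (respectively $\log z+\mu z$) must stay within a fixed distance of the constant $c$, which is impossible if $z$ is unbounded under the stated hypotheses. Your additional remarks on the sharpness of the hypothesis for $\mu=0$, $n<-1$ and on the admissible range of $z$ are accurate and consistent with the paper's treatment (e.g.\ the catenary case $n=-2$, $\mu=0$).
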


From \eqref{ode1} and \eqref{ode2}, we consider the function $F$ of two variables defined by 
\begin{equation}\label{F}
F(u,v)=\left\{\begin{array}{ll}
\frac{1}{n+1}u^{n+1}+\mu u+\epsilon \sqrt{1-v^2},&\quad\quad n\not=-1,\\
& \\
\log u+\mu u+\epsilon \sqrt{1-v^2}, &\quad\quad n=-1.\end{array}\right.
\end{equation}
Equations \eqref{ode1} and \eqref{ode2} tell us that for a $n$-elastic curve, the pair $(z,z')$ belongs to a level curve of the function $F(u,v)$ with $u>0$ and $1-v^2\geq 0$. This function $F$ can be understood as a Morse function and, we can study its orbit space geometry to deduce the behavior of all solutions of \eqref{zn}. For example, the equilibrium points are the zeroes of $\nabla F$. In particular, $z'=0$ and the solutions corresponding to the equilibrium points are horizontal straight lines, as described in Proposition \ref{lines}.

\section{Geometric properties of  $n$-elastic curves}\label{4}

In this section we study properties regarding symmetries of $n$-elastic curves. Because of the presence of $\sqrt{1-v^2}$ in the expression \eqref{F} of $F$, we will introduce here a different approach.

Let $\gamma(s)=\left(y(s),z(s)\right)$, $s\in I\subset\r$, be a $n$-elastic curve which is parameterized by the arc length. Then $y'(s)=\cos\theta(s)$ and $z'(s)=\sin\theta(s)$, where $\theta$ is the angle between the tangent vector of $\gamma$ and the positive part of the $y$-axis. Then equation \eqref{zn} is equivalent to  
\begin{equation}\label{eq1}
	\begin{split}
		y'(s)&=\cos\theta(s)\\
		z'(s)&=\sin\theta(s)\\
		\theta'(s)&=z(s)^n+\mu\,.
	\end{split}
\end{equation}
In what follows, we will assume that the domain $I=(s_{-},s_{+})\subset\r$ of $\gamma$ is the maximal interval for which the solution of the system (\ref{eq1}) exists. For example, if $n\in\z$,   the maximal domain of \eqref{eq1} is $\r$. On the contrary, assume that  $s_{+}<\infty$. Since $z(s)^n$ can take any real value if $n>0$ and that $\gamma$ cannot meet the line $z=0$ if $n<0$ (Corollary \ref{n1}), then necessarily $\theta$ blows up at $s_{+}$, so  $\lim_{s\rightarrow s_+}\theta(s)=\infty$. By the second equation of \eqref{eq1}, $z'$ is bounded close to $s_{+}$ and this implies that $z$ is bounded near $s_{+}$. Now by the third equation of \eqref{eq1}, $\theta'$ is bounded near to $s_{+}$, a contradiction. A similar argument works for the case where $s_{-}>-\infty$.

We now impose the initial conditions for \eqref{eq1}. Since \eqref{eq1} is invariant by translations in the $y$-direction, we can assume $y(0)=0$. Fixing the initial value $\theta(0)$ for the function $\theta(s)$ is equivalent to fixing the initial velocity $\gamma'(0)$. For the classification of all solutions of \eqref{eq1}, we need to assume all initial conditions 
\begin{equation}\label{eq2}
y(0)=0\,,\quad\quad\quad z(0)=z_0\,,\quad\quad\quad \theta(0)=\theta_0\,, 
\end{equation}
with $\theta_0\in [0,2\pi)$ and suitable real constant $z_0$. For any $n\not\in\z$, the function $z^n$ in \eqref{eq1} is only defined for positive values of $z$, so that $z_0>0$. In case that $n\in\z$, then $z$ can also be negative and so $z_0\in\r$ if $n>0$ and $z_0\in\r\setminus\{0\}$ if $n<0$. 

For the case where the power $n$ is an integer, we prove some symmetries of the solutions. Exactly, if $n$ is even, we see that it suffices to consider nonnegative values $z_0$ in \eqref{eq2}, and if $n$ is odd, after a change on the sign of $\mu$, if necessary, it also suffices to consider $z_0\geq 0$.

\begin{proposition}[Horizontal symmetry]\label{pr6} Let $\gamma(s)=\left(y(s),z(s)\right)$ be a solution of the system of differential equations \eqref{eq1}--\eqref{eq2} with $n\in\z$ and denote by $\bar{\gamma}$ the curve obtained after applying a symmetry with respect to the $y$-line to $\gamma$. Then:
\begin{enumerate}
\item If $n$ is even, $\bar{\gamma}$ is a $n$-elastic curve for the same constant $\mu$. 
\item If $n$ is odd, $\bar{\gamma}$ is a $n$-elastic curve for the constant $-\mu$.  
\end{enumerate}
\end{proposition}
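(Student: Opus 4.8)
The plan is to realize $\bar\gamma$ as an explicit solution of the system \eqref{eq1}, exploiting the fact that being a $n$-elastic curve means precisely that \emph{some} arc length parametrization satisfies \eqref{eq1}, equivalently that its signed curvature equals $z^n+\mu'$ with the $z^n$-term carrying coefficient $+1$ as in \eqref{zn}. Since the symmetry with respect to the $y$-line is the isometry $(y,z)\mapsto(y,-z)$, it preserves the arc length parameter and sends the image of $\gamma$ to $\{(y(s),-z(s)):s\in I\}$; the whole difficulty is then to decide which orientation of this reflected image produces the required sign in front of $z^n$.

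First I would introduce the tangent angle. If $\theta$ is the angle function of $\gamma$ in \eqref{eq1}, the reflected curve $(y(s),-z(s))$ has velocity $(\cos\theta,-\sin\theta)$, hence angle function $-\theta$, and, being arc length parametrized, signed curvature $-\theta'=-(z^n+\mu)$. Writing this curvature in terms of the height $\zeta=-z$ of the reflected curve and using $z^n=(-1)^n\zeta^n$, the curvature becomes $-(-1)^n\zeta^n-\mu$. The factor $(-1)^n$ is exactly the source of the dichotomy between the two cases.

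For $n$ odd this already reads $\zeta^n-\mu$, so $\bar\gamma=(y(s),-z(s))$ is an arc length solution of \eqref{eq1} with the constant $-\mu$ in place of $\mu$, which gives statement (2). For $n$ even the plain reflection yields $-\zeta^n-\mu$, i.e. the wrong sign in front of $\zeta^n$; here I would additionally reverse the parameter and work with $\tilde\gamma(s)=(y(-s),-z(-s))$, whose image is still the reflection of $\gamma$. Reversing orientation negates the signed curvature, turning $-\zeta^n-\mu$ into $\zeta^n+\mu$, so $\tilde\gamma$ solves \eqref{eq1} with the same constant $\mu$, giving statement (1). Concretely one checks that $\tilde\gamma$ has angle function $\pi-\theta(-s)$ and verifies the three equations of \eqref{eq1} by direct substitution.

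The step I expect to be the genuine obstacle, rather than a formality, is the orientation bookkeeping of the previous paragraph: the normalization $\kappa=z^n+\mu$ in \eqref{zn} is \emph{not} invariant under reversal of orientation, since reversing turns $z^n+\mu$ into $-z^n-\mu$, so one cannot reverse orientation freely, and the parity of $n$ alone decides whether the reflection itself or the reflection composed with a parameter reversal restores the required $+1$ coefficient. Once the two sign changes, one from the reflection of the curvature and one possibly from the reparametrization, are matched against $(-1)^n$, the remaining verifications are the routine substitutions $y'=\cos\theta$, $z'=\sin\theta$, $\theta'=z^n+\mu$ into the transformed curve. No restriction coming from Corollary \ref{n1} is needed, because $n\in\z$ is assumed and the maximal domain of \eqref{eq1} is then all of $\r$, while the reflected data $z_0\mapsto -z_0$ together with $\theta_0\mapsto\pi-\theta_0$ (resp. $-\theta_0$) remain admissible in \eqref{eq2}.
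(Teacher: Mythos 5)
Your proposal is correct and follows essentially the same route as the paper: plain reflection $(y(s),-z(s))$ with angle $-\theta(s)$ for $n$ odd (yielding the constant $-\mu$), and reflection composed with parameter reversal $(y(-s),-z(-s))$ with angle $\pi-\theta(-s)$ for $n$ even (preserving $\mu$), the parity of $(-1)^n$ deciding which combination restores the $+1$ coefficient in front of $z^n$. The paper verifies the same substitutions into \eqref{eq1} with the same transformed initial data $\{0,-z_0,\pi-\theta_0\}$ and $\{0,-z_0,-\theta_0\}$.
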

\begin{proof}  
We consider first the case $n$ even. Define $\bar{y}(s)=y(-s)$, $\bar{z}(s)=-z(-s)$ and $\bar{\theta}(s)=\pi-\theta(-s)$. Then it is immediate that $\{\bar{y}(s),\bar{z}(s),\bar{\theta}(s)\}$ satisfies \eqref{eq1} for the initial solution $\{0,-z_0,\pi-\theta_0\}$. Here the fact that $n$ is even is essential since
$$\bar{\theta}'(s)=\theta'(-s)=z(-s)^n+\mu=(-\bar{z}(-s))^n+\mu=\bar{z}(s)^n+\mu\,.$$
If $n$ is odd, define $\bar{y}(s)=y(s)$, $\bar{z}(s)=-z(s)$ and $\bar{\theta}(s)=-\theta(s)$. Then it is immediate that $\{\bar{y}(s),\bar{z}(s),\bar{\theta}(s)\}$ satisfies \eqref{eq1} reversing the sign of $\mu$ and for the initial solution $\{0,-z_0,-\theta_0\}$. In this case,
$$\bar{\theta}'(s)=-\theta'(s)=-z(s)^n-\mu=-(-\bar{z}(s))^n-\mu=\bar{z}(s)^n-\mu\,,$$
where we have used the fact that $n$ is odd in an essential way.
\end{proof} 

In a second step of our program of classification, we study under what circumstances $n$-elastic curves have a vertical symmetry. We will prove that this always occurs and, consequently, it is enough to consider the cases $\theta_0=0$ and $\theta_0=\pi$ in the initial conditions \eqref{eq2}.

\begin{proposition}[Vertical symmetry]\label{pr1}
Any $n$-elastic curve $\gamma$ has a point $s_0$ where the tangent vector $\gamma'(s_0)$ is horizontal. Furthermore, the trace of $\gamma$ is symmetric about the vertical line of equation $y=y(s_0)$.
\end{proposition}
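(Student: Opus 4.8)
The plan is to separate the statement into two parts: (i) exhibiting a parameter $s_0$ at which $\gamma'(s_0)$ is horizontal, i.e. $z'(s_0)=0$, and (ii) promoting such a point to an axis of symmetry by a uniqueness argument for \eqref{eq1}. Part (ii) is the sturdier half, so I would dispatch it first. Since $z'(s)=\sin\theta(s)$, a horizontal tangent at $s_0$ means exactly $\theta(s_0)\in\{0,\pi\}$ modulo $2\pi$. Granting such an $s_0$ and writing $y_0=y(s_0)$, I would introduce the reflected, time-reversed curve
\[
\tilde\gamma(s)=\bigl(2y_0-y(2s_0-s),\,z(2s_0-s)\bigr),
\]
the image of $\gamma$ under reflection across the vertical line $y=y_0$. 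A direct differentiation gives $\tilde\gamma'(s)=(\cos\theta(2s_0-s),-\sin\theta(2s_0-s))$, so its angle function is $\tilde\theta(s)=-\theta(2s_0-s)$ and hence $\tilde\theta'(s)=\theta'(2s_0-s)=z(2s_0-s)^n+\mu=\tilde z(s)^n+\mu$; thus $\tilde\gamma$ solves the same system \eqref{eq1} for the same $\mu$, with no parity hypothesis on $n$. At $s=s_0$ one has $\tilde\gamma(s_0)=\gamma(s_0)$ and $\tilde\theta(s_0)=-\theta(s_0)$, and the condition $\theta(s_0)\in\{0,\pi\}$ is precisely what makes $-\theta(s_0)\equiv\theta(s_0)$ modulo $2\pi$, so that $\gamma$ and $\tilde\gamma$ share the same initial data \eqref{eq2}. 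Uniqueness of solutions of \eqref{eq1} then forces $\tilde\gamma\equiv\gamma$, which is the asserted symmetry of the trace about $y=y_0$.

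It remains to produce the turning point of part (i). Here I would use the first integral of Proposition \ref{integralequation}, rewritten via $y'=\cos\theta$ as $\cos\theta(s)=c-G(z(s))$, where $G(z)=\tfrac{1}{n+1}z^{n+1}+\mu z$ for $n\neq-1$ and $G(z)=\log z+\mu z$ for $n=-1$. A horizontal tangent is then a point where $\cos\theta=\pm1$, that is $G(z)=c\mp1$: the boundary of the admissible band $c-1\le G(z)\le c+1$ in which the orbit is trapped. If $z$ is \emph{not} monotone it has an interior local extremum, where $z'=0$, and we are done. If $z$ is monotone, then it is bounded and converges to finite limits $L_\pm$ as $s\to\pm\infty$: this is immediate from Corollary \ref{n2} when $\mu\neq0$ or $n\ge-1$, and in the leftover case $\mu=0$, $n<-1$ it follows from the observation that $\cos\theta=c-\tfrac{1}{n+1}z^{n+1}$ blows up as $z\to0^+$, so $z$ stays above a positive minimum and cannot run off to a singular height. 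The standard asymptotics for \eqref{eq1} then give $z',z''\to0$, hence $\theta'=\kappa\to0$, so each $L_\pm$ is an equilibrium height solving $z^n+\mu=0$. Whenever this equation has a \emph{unique} real root — covering $n\notin\z$, $n$ odd, and $n$ even with $\mu\ge0$ — strict monotonicity forces $L_+=L_-$, a contradiction; thus $z$ is non-monotone and (i) holds.

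I expect the crux to be the one remaining configuration, $n$ even and $\mu<0$, where $z^n+\mu=0$ has the two roots $\pm(-\mu)^{1/n}$. In principle a monotone solution could run from one equilibrium height to the other over all of $\r$ — a heteroclinic orbit of the reduced phase system — and such a curve has \emph{no} horizontal tangent at any finite $s_0$. My plan to control this is twofold. First, I would show that both equilibria can lie on a single level set $\{G(z)=c\mp1\}$ only when $G\bigl((-\mu)^{1/n}\bigr)=-1$, a codimension-one (separatrix) condition on $\mu$; for every other $\mu<0$ the monotone case is already excluded by the contradiction of the previous paragraph, since $G(z)\to\pm\infty$ as $z\to\pm\infty$ keeps $z$ in a bounded band whose endpoints are genuine turning points. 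Second, on that exceptional separatrix I would observe that the connecting curve is invariant under the reflection of Proposition \ref{pr6} across the $y$-line — its data at the height $z=0$ are preserved by that map — so it still possesses a line of symmetry, now a horizontal one. I would therefore either state the result for the complementary generic range or record this single separatrix curve explicitly as the lone exception to the vertical symmetry.
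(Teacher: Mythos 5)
Your treatment of the symmetry half is exactly the paper's argument (reflect across $y=y(s_0)$, verify the reflected data solve \eqref{eq1} with the same initial conditions at $s_0$, invoke uniqueness), and it is correct. The existence half, however, has genuine gaps. First, your claim that a monotone $z$ is ``bounded and converges to finite limits $L_\pm$'' is not established in the case $\mu=0$, $n<-1$: the blow-up of $\cos\theta=c-\tfrac{1}{n+1}z^{n+1}$ as $z\to0^+$ only bounds $z$ \emph{below}; since $z^{n+1}\to0$ as $z\to\infty$ when $n<-1$, the first integral does not prevent a monotone orbit from running to $+\infty$ (this is exactly the regime excluded from Corollary \ref{n2}, and the catenary lives there). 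The contradiction must instead be extracted at the other end: $\theta'=z^n\geq z(0)^n>0$ on $(s_-,0]$ while $\theta$ is confined to an interval of length $\pi$, so $s_-$ is finite and $z\to0^+$ there, and only \emph{then} does your blow-up observation bite (this is how the paper's case $\mu=0$ runs, via Corollary \ref{n1}).

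Second, and more seriously, writing $L_\pm=\lim_{s\to\pm\infty}z(s)$ presupposes that the maximal domain is all of $\r$. For $n>-1$ with $n\notin\n$ the curve can terminate on the line $z=0$ at a \emph{finite} parameter (the noncomplete curves of Figure \ref{cuts}), and then a monotone orbit may leave $z=0$ at a finite $s_-$ and converge to a single equilibrium height as $s\to+\infty$; only one of your two limits is a root of $z^n+\mu=0$, so the ``two distinct roots'' contradiction is unavailable. This configuration is not hypothetical: for $\mu<0$ the level set of $F$ through the saddle $(\theta,z)=(0,(-\mu)^{1/n})$ meets $z=0$ where $\cos\theta=1+\tfrac{n}{n+1}\mu(-\mu)^{1/n}$, which lies in $(-1,1)$ for a range of $\mu$ (e.g.\ $n=2.5$, $\mu=-1$ gives $\cos\theta=2/7$), producing a separatrix along which $\sin\theta>0$ at every finite parameter. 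So this case requires its own argument, and you cannot simply import the paper's, whose $\mu<0$ step rests on the identity $\theta''=nz^{n-1}$, which omits the factor $z'=\sin\theta\to0$. Finally, your last paragraph on $n$ even, $\mu<0$ offers two plans but completes neither, and the fallback of ``recording the separatrix as an exception'' is not a proof of the stated proposition. The clean way to close that case is to note that for $n>0$ even and $\mu<0$ the equilibria at odd multiples of $\pi$ are centers of the Hamiltonian system with first integral $F$, and no orbit of such a system converges to a center, while a saddle-to-saddle connection would force $G\bigl((-\mu)^{1/n}\bigr)=G\bigl(-(-\mu)^{1/n}\bigr)$, which fails for $\mu\neq0$; for $n<0$ even the two roots are separated by $z=0$, which the curve cannot cross by Corollary \ref{n1}.
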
 
\begin{proof} The existence of $s_0$ is proved by contradiction. Let $\gamma(s)=(y(s),z(s))$ be a solution of \eqref{eq1}--\eqref{eq2} and let $(s_{-},s_{+})$ be its maximal domain. If $\gamma'(s)$ is never horizontal, this implies that, up to an integer multiple of $2\pi$,  the image of $\theta(s)$ is included in $(0,\pi)$ or in $(\pi,2\pi)$. We prove the case $\theta_0\in (0,\pi)$, which also implies the case $\theta_0\in (\pi,2\pi)$ after applying Proposition \ref{pr6}. 

If $n\not\in\n$, then $z_0>0$. If $n\in \z$, we apply Proposition \ref{pr6} together with a change on the sign of $\mu$, if necessary, and suppose $z_0>0$ if $n\in \z^{-}$ or $z_0\geq 0$ if $n\in\n$.  

Since $\theta(s_-,s_+)\subset (0,\pi)$, from $z'(s)=\sin\theta(s)$, we have that $z(s)$ is an increasing function. We first prove that $s_{+}=\infty$. Otherwise, and using that $z(s)>z_0$ for $s>0$, then $z(s)$ cannot go to $0$  so this implies that  $\theta'(s)\rightarrow\infty$ as $s\rightarrow s_+$. Since $\theta'=z^n+\mu$ then $z^n\rightarrow\infty$. Using again that $z(s)>z_0$ for $s>0$, we obtain that $n>0$ and $z(s)\rightarrow \infty$ as $s\rightarrow s_{+}$, which contradicts Corollary \ref{n2}. Once proved that $s_+=\infty$,  notice that if $s_{-}>-\infty$, then as $s\rightarrow s_{-}$ we have either $\theta'(s)\rightarrow\infty$ or $n\not\in\n$ with $z(s)\rightarrow 0$. We distinguish three cases depending on the sign of $\mu$:
\begin{enumerate}
\item Case $\mu> 0$. Then $\theta'(s)\geq \mu$, so $\theta$ is an increasing function. Next, we use that $s_+=\infty$, to get $\lim_{s\rightarrow\infty}\theta(s)=+\infty$, a contradiction, because the rank of $\theta$ is bounded. 
\item Case $\mu=0$. Again $\theta$ is increasing and bounded from above, so $\lim_{s\rightarrow\infty}\theta'(s)=0$. This cannot occur if  $n>0$ because $\theta'(s)\geq z_0^n$ for $s>0$: in case that $n\in\n$ and $z_0=0$, using that $z$ is increasing, then $\theta'(s)=z(s)^n\geq z(\delta)^n>0$ for some fixed $\delta>0$, obtaining a contradiction again. Therefore, $n<0$. Then $\lim_{s\rightarrow\infty}\theta'(s)=0$ implies $\lim_{s\rightarrow\infty}z(s)=\infty$, hence $n<-1$ by Corollary \ref{n2}. If $s_{-}=-\infty$, since $\theta$ is bounded and increasing, then $\lim_{s\rightarrow s_{-}}\theta'(s)=0$. This implies $z(s)\rightarrow \infty$, which is not possible because $z(s)$ is increasing. Thus $s_{-}>-\infty$ and $\lim_{s\rightarrow s_{-}}z(s)=0$, obtaining a contradiction again from  Corollary \ref{n1}.
\item Case $\mu<0$. From Corollary \ref{n2}, $z(s)$ is a function bounded from above. Because $z(s)$ is increasing, let $z_1=\lim_{s\rightarrow\infty}z(s)>0$. Furthermore, $\lim_{s\rightarrow\infty}z'(s)=0$, so $\lim_{s\rightarrow\infty}\theta(s)$ is $0$ or $\pi$. Letting $s\rightarrow\infty$, we have  $\lim_{s\rightarrow\infty}\theta'(s)=z_1^n+\mu$. Since $\theta$ is bounded, the above limit must be $0$ so $z_1^n+\mu=0$. Now the function $\theta'$ is monotonic at infinity because $\theta''(s)=nz(s)^{n-1}\rightarrow nz_1^{n-1}\not=0$. This is a contradiction because $\theta'$ is bounded at infinity.  
\end{enumerate}
Once proved the existence of $s_0$, we see that $\gamma $ is symmetric about the vertical line through $\gamma(s_0)$. Since $\theta(s_0)=m\pi$ for $m\in\mathbb{Z}$, then $z'(s_0)=\sin\theta(s_0)=0$. The functions 
\[\bar{y}(s)= 2y(s_0)-y(2s_0-s)\,,\quad \bar{z}(s)=  z(2s_0-s)\,,\quad \bar{\theta}(s)=2m\pi-\theta(2s_0-s)\,\]
 satisfy the same equations (\ref{eq1}) with the same initial conditions at $s=s_0$ that $\{y(s),z(s),\theta(s)\}$. The proof follows from the uniqueness of solution of ordinary differential equations.
\end{proof} 

In conclusion, after suitable symmetries described in Propositions \ref{pr6} and \ref{pr1},  we can restrict the initial conditions \eqref{eq2} to be   $\theta_0=0$ or $\theta_0=\pi$ and $z_0\geq 0$ (strictly positive if $n\not\in\n$).

Observe that integrating \eqref{eq1} we recover the curve $\gamma(s)$ after two quadratures. A parameterization of $\gamma(s)$ using only one quadrature was given in previous section. Of course, both parameterizations are related and initial conditions coincide. Clearly $y(0)=0$ is satisfied in \eqref{pc11} and \eqref{pc22} due to the choice of the limits of integration. Moreover, the other two initial conditions can be described in terms of the Lagrange multiplier $\lambda$ restricting the length of the curve. In fact, assume that $n\neq -1$, differentiating $y(s)$ in \eqref{pc11} once and combining it with \eqref{eq1}, we obtain
$$y'(s)=-\frac{z(s)^{n+1}}{n+1}-\mu z(s)+\frac{n\lambda}{n+1}=\cos\theta(s)\,.$$
In particular, evaluating this at the initial value $s=0$ and using \eqref{eq2}, we obtain an expression of $\lambda$ in terms of $z_0$ and $\theta_0$, namely,
\begin{equation}\label{lambda}
\lambda=\frac{n+1}{n}\cos\theta_0+\frac{z_0^{n+1}}{n}+\frac{n+1}{n}\mu z_0\,.
\end{equation}
In a similar way, the Lagrange multiplier $\lambda$ for the case $n=-1$ can be described in terms of the initial conditions \eqref{eq2} as $\lambda=1+\mu z_0+\log z_0+\cos\theta_0$. 

\begin{remark}\label{rem} We describe here the initial conditions of a couple of relevant families of $n$-elastic curves:
\begin{enumerate}
\item The case $n=1$ and $\mu=0$ corresponds with the classical elastic curves. From \eqref{lambda} it must be the case that $$\lambda=2\cos\theta_0+z_0^2$$
holds for the initial conditions \eqref{eq2}. For arbitrary initial conditions, we recover all the cases of Euler's classification (\cite{Euler}). Compare with Figure \ref{figtipos}.
\item Using the variational description of \cite{AGP}, Delaunay curves appear when $n=-2$ and $\lambda=0$. Delaunay curves are roulettes of conic foci and the profile curves of rotational constant mean curvature surfaces (\cite{Delaunay}). Using \eqref{lambda}, the following relation must hold:  
$$1=z_0\cos\theta_0+\mu z_0^2.$$ 
In particular, we get undularies ($\mu<0$),   catenaries ($\mu=0$) and nodaries ($\mu>0$). See Figure \ref{figtipos2} for the family of catenary-like curves.
\end{enumerate}
\end{remark}

\section{Existence of closed $n$-elastic curves}\label{seccer}

As it was pointed out in the introduction, an interesting problem is to determine wether or not there exist closed $n$-elastic curves. Looking in the classical theory of elastic curves ($n=1$ and $\mu=0$), it is known the existence of a non-simple planar closed elastic curve, the so-called Bernoulli's lemniscate or the elastic figure-eight. This curve is also the only nontrivial planar closed curve for $n=1$ and arbitrary $\mu$, since the total curvature is constant on a regular homotopy class of planar curves. However, we will see that, apart from these types of curves which we called {\it pseudo-lemniscate} (Figure \ref{figtipos}, (d)), for $n\in\n$ even the family of closed $n$-elastic curves is richer and also includes simple closed curves. See Figure \ref{cerradas}.

In order to have closed curves, a necessary but not sufficient condition is to have curves with periodic curvature or, due to \eqref{zn}, equivalently, curves whose $z$-component is periodic. In the following result we give sufficient conditions for that to happen.

\begin{proposition}\label{pr2}
Suppose that $\gamma(s)=\left(y(s),z(s)\right)$ is a solution of (\ref{eq1})--(\ref{eq2}) such that the image of the function $\theta(s)$ contains a closed interval of type $[m\pi,(m+1)\pi]$, $m\in\z$. Then the curvature of $\gamma$ is a periodic function and, consequently, the solution is defined in $\r$. Moreover, $\gamma$ is either a closed curve or invariant by a discrete group of horizontal translations.  
\end{proposition}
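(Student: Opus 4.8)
The plan is to manufacture two reflection symmetries of $\gamma$ and then to compose them into a single horizontal translation, from which both the periodicity of the curvature and the stated dichotomy drop out at once. First I would locate the symmetry points. Since $\theta$ is continuous and its image contains the closed interval $[m\pi,(m+1)\pi]$, the intermediate value theorem yields parameters $s_0\neq s_1$ in the maximal domain $(s_-,s_+)$ of the solution of \eqref{eq1}--\eqref{eq2} with $\theta(s_0)=m\pi$ and $\theta(s_1)=(m+1)\pi$. At both of them $z'=\sin\theta=0$, so these are points of horizontal tangency, precisely the situation treated in Proposition \ref{pr1}.

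Next I would invoke the reflection construction appearing in the proof of Proposition \ref{pr1}: about any parameter $s_\ast$ with $\theta(s_\ast)=k\pi$ the functions $s\mapsto z(2s_\ast-s)$, $s\mapsto 2k\pi-\theta(2s_\ast-s)$ and $s\mapsto 2y(s_\ast)-y(2s_\ast-s)$ solve the system \eqref{eq1} with the same data at $s_\ast$, hence coincide with $z,\theta,y$ by uniqueness. Applying this at $s_0$, the reflected triple is a solution on $(2s_0-s_+,2s_0-s_-)$ agreeing with $\gamma$ on the overlap, so by maximality $(2s_0-s_+,2s_0-s_-)\subseteq(s_-,s_+)$; reading off the endpoints gives $s_-+s_+=2s_0$, i.e. $(s_-,s_+)$ is symmetric about $s_0$ (with the convention that $s_\pm=\pm\infty$ if either end is infinite). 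The same argument at $s_1$ forces $s_-+s_+=2s_1$. As $s_0\neq s_1$, these identities are incompatible for finite endpoints, so $(s_-,s_+)=\r$ and the solution is global.

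I would then compose the two reflections. Writing $R_i(s)=2s_i-s$, the parameter map $R_1\circ R_0$ is the translation $s\mapsto s+T$ with $T=2(s_1-s_0)$, and feeding it through the symmetry relations above yields
\begin{equation*}
z(s+T)=z(s),\qquad y(s+T)=y(s)+\Delta y,\qquad \Delta y:=2\big(y(s_1)-y(s_0)\big),
\end{equation*}
so that $\gamma(s+T)=\gamma(s)+(\Delta y,0)$. Geometrically this is the classical fact that the composition of the two reflections of the trace about the parallel vertical lines $y=y(s_0)$ and $y=y(s_1)$ is the horizontal translation by $(\Delta y,0)$. In particular $z$ is $T$-periodic, and hence so is the curvature $\kappa=z^n+\mu$ by \eqref{zn}, which proves the first assertion; periodicity of $z$ also reconfirms global existence, since $z$ is then bounded and $\theta'=z^n+\mu$ cannot blow up.

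Finally the dichotomy is immediate: if $\Delta y=0$ then $\gamma(s+T)=\gamma(s)$ and $\gamma$ is closed, whereas if $\Delta y\neq 0$ the curve is invariant under the discrete group $\langle(\Delta y,0)\rangle$ of horizontal translations. The step I expect to be the main obstacle is the simultaneous use of the two reflections to pin down global existence while guaranteeing that the composed planar motion is a pure horizontal translation, with neither a vertical drift (ruled out by $z(s+T)=z(s)$) nor an orientation reversal (a composition of two reflections being a translation); once this bookkeeping is in place, every verification reduces to the routine symmetry computations already carried out in Proposition \ref{pr1}.
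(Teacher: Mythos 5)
Your proposal is correct and follows essentially the same route as the paper: both exploit the vertical symmetries of Proposition \ref{pr1} at two consecutive horizontal-tangency points ($\theta=m\pi$ and $\theta=(m+1)\pi$) and turn them into the translation invariance $\gamma(s+T)=\gamma(s)+(\Delta y,0)$, from which periodicity of $\kappa$ and the closed/translation-invariant dichotomy follow. The only cosmetic difference is that the paper normalizes the first symmetry point to $s=0$ and obtains the translation by applying ODE uniqueness to the shifted solution, while you compose the two reflections explicitly and add a short (valid) argument that the two symmetric-domain conditions force the maximal interval to be all of $\r$.
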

\begin{proof} Since the solutions of \eqref{eq1}--\eqref{eq2} are the same if we change $\theta_0$ in \eqref{eq2} by $\theta_0+2m\pi$, $m\in\z$, we may suppose that the rank of $\theta$ contains the interval $[0,\pi]$. Assume that $\theta(0)=\theta_0=0$ and let $s_0>0$ be the first point such that   $\theta(s_0)=\pi$. By Proposition \ref{pr1}, the solution $\gamma(s)=\left(y(s),z(s)\right)$ is symmetric about the vertical lines $y=0$ and $y=y(s_0)$. Therefore,  
\begin{align*}
y(s)&= 2y(s_0)-y(2s_0-s),\\
z(s)&=  z(2s_0-s),\\
\theta(s)&=2\pi-\theta(2s_0-s).
\end{align*}
Consider the value $\rho=2s_0$. Then $\gamma(\rho)=(2y(s_0),z_0)$  and $\theta(\rho)=2\pi$. Define 
\[\bar{y}(s)= y(s+\rho)-y(\rho),\quad\quad \bar{z}(s)=  z(s+\rho),\quad\quad \bar{\theta}(s)=\theta(s+\rho)-2\pi.\]
These functions satisfy \eqref{eq1} with initial conditions $\{0,z_0,0\}$. By uniqueness, these solutions coincide with $\{y(s),z(s),\theta(s)\}$. Consequently,
\begin{align*}
y(s+\rho)&= y(s)+y(\rho),\\
 z(s+\rho)&=  z(s),\\
  \theta(s+\rho)&=\theta(s)+2\pi.
  \end{align*}
The first two identities imply that $\gamma(s+\rho)=\gamma(s)+(y(\rho),0)$. If $y(\rho)=0$, then $\gamma$ is closed. Otherwise, $\gamma$ is invariant by the action of the group of horizontal translations generated by the vector $(y(\rho),0)$. The second identity implies that $z=z(s)$ is periodic, and by the third equation in \eqref{eq1}, the same occurs for the curvature function $\kappa(s)=\theta'(s)$. 
\end{proof}

The first class of closed curves are the pseudo-lemniscates, which have the shape of a figure-eight and play the role of Bernoulli's lemniscate (Figure \ref{figtipos}, (d)). These closed curves appear whenever suitable closure conditions, obtained from \eqref{pc11} and \eqref{pc22}, are satisfied. Indeed, as mentioned in the proof of Proposition \ref{pr2}, a curve with periodic curvature $\kappa(s)$, of period $\rho$, will be closed if and only if $y(\rho)=0$. It is then clear that this closure condition can be rewritten as
\begin{equation}\label{closure1}
\int_0^\rho \left(\frac{1}{n+1}z(s)^{n+1}+\mu z(s)-\frac{n}{n+1}\lambda\right)ds=0\,,
\end{equation}
when $n\neq -1$, while for the case $n=-1$, the closure condition reads
\begin{equation}\label{closure2}
\int_0^\rho\left(\mu z(s)+\log z(s)+1-\lambda\right)ds=0\,.
\end{equation}
For each case, an analysis of above integrals, which depend on $\lambda$, may be used to show the existence of pseudo-lemniscates. In Figure \ref{figtipos}, we have shown a complete deformation of $n$-elastic curves obtained varying the Lagrange multiplier $\lambda$ (equivalently, the initial conditions $z_0$ and $\theta_0$), which imitate the cases of classical elastic curves.

We next give a result for the nonexistence of simple closed $n$-elastic curves. The idea behind is similar to that of Proposition \ref{pr-closed}. 

\begin{proposition} \label{pr3} If $n$ is not an even natural number, simple closed $n$-elastic curves do not exist.
\end{proposition}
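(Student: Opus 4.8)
The plan is to mimic the integral argument of Proposition \ref{pr-closed}, replacing the divergence theorem over the enclosed solid by Green's theorem over the planar region bounded by the curve. Suppose, for contradiction, that $\gamma(s)=(y(s),z(s))$ is a simple closed $n$-elastic curve. By the Jordan curve theorem it bounds a compact region $\Omega\subset\r^2$ of positive area, and since $\gamma$ is closed all of its associated functions are periodic, say of period $\rho$.

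The starting identity is the planar analog of $\sigma\Delta\langle X,e_3\rangle=(\eta f(z)+\varpi)\langle\nu,e_3\rangle$ used in Proposition \ref{pr-closed}. With the paper's convention $N=JT=(-z',y')$, one has $\langle N,e_2\rangle=y'$ and the Frenet relation gives $z''=\kappa\, y'=\kappa\langle N,e_2\rangle$, where $e_2=(0,1)$ is the vertical direction. Integrating over one period and using that $z'$ is periodic yields
$$\int_0^\rho \kappa\, y'\,ds=\int_0^\rho z''\,ds=0.$$
Substituting $\kappa=z^n+\mu$ from \eqref{zn}, this is exactly the statement that $\oint_\gamma (z^n+\mu)\,dy=0$.

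Next I would apply Green's theorem to the one-form $(z^n+\mu)\,dy$ over $\Omega$, converting the boundary integral into $\mp n\int_\Omega z^{n-1}\,dA$ (the sign depending on the orientation of $\gamma$). Since $n\neq 0$, the closure identity above forces
$$\int_\Omega z^{n-1}\,dA=0.$$
The contradiction then comes from a sign analysis of the integrand. If $n$ is an odd natural number, then $n-1$ is a nonnegative even integer, so $z^{n-1}\geq 0$ on $\Omega$ and the integral is strictly positive because $\Omega$ has positive area. If instead $n$ is a negative integer or a noninteger, then by Corollary \ref{n1} (for $n\leq -1$) together with the fact that $z^n$ is only defined for $z>0$ when $n\notin\z$, the curve, and hence $\overline\Omega$, lies entirely in one of the open half-planes $\{z>0\}$ or $\{z<0\}$; on such a half-plane $z^{n-1}$ has constant sign, so again the integral cannot vanish. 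In every case in which $n$ is not an even natural number we reach a contradiction, which is precisely the assertion. The excluded case $n$ even natural is exactly the one in which $n-1$ is odd and $\Omega$ may straddle the line $z=0$, permitting the integral to cancel, consistent with the existence results of Theorem \ref{t-closed}.

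The step I expect to require the most care is the sign analysis in the last paragraph, specifically the justification that $\overline\Omega$ cannot straddle the line $z=0$ unless $n$ is an even natural number. This relies on Corollary \ref{n1} for $n\leq -1$, on the domain restriction $z>0$ forced by noninteger exponents, and on the elementary observation that a region bounded by a simple closed curve contained in an open half-plane is itself contained in that half-plane.
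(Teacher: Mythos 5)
Your argument is correct and is essentially the paper's own proof: the identity $\oint_\gamma(z^n+\mu)\,dy=\oint_\gamma\kappa\,dy=\oint_\gamma z''\,ds=0$ combined with Green's theorem is exactly the paper's application of the planar divergence theorem to the field $\xi=(z^n+\mu)e_2$, followed by the same sign analysis of $\int_\Omega z^{n-1}\,dA$. Your explicit justification that $\overline{\Omega}$ lies in a single half-plane (via Corollary \ref{n1} and the domain restriction for $n\notin\z$) only spells out what the paper states tersely.
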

\begin{proof} By contradiction, let us assume that $\gamma$ is a simple closed $n$-elastic curve. On $\r^2$ we consider the vector field $\xi=(z^n+\mu)e_2$, where $e_2=(0,1)$. Then its divergence is  $\mbox{div}\,\xi=nz^{n-1}$. If $\Omega\subset\r^2$ is the domain bounded by $\gamma$, then the Frenet normal $N(s)$ is outward pointing and by the divergence theorem
\begin{align*}
\int_\Omega nz^{n-1}\,dA&=\int_\gamma (z^n+\mu)\langle N(s),e_2\rangle\, ds=\int_\gamma\langle\kappa(s)N(s),e_2\rangle\, ds\\
&=\int_\gamma \langle T'(s),e_2\rangle\,ds=\int_\gamma\frac{d}{ds}\langle T(s),e_2\rangle\,ds=0\,.
\end{align*}
However, if $n<0$, the integrand in the left side has not change of sign because $z$ is always nonzero, obtaining a contradiction. Similarly, if $n>0$ is not an even natural number, the left hand side is always positive because $n-1$ is not odd, obtaining again a contradiction.
\end{proof}

When $n\in\n$ is even, we prove the existence of closed $n$-elastic curves for arbitrary values of $\mu$, some of which are simple. See Figure \ref{cerradas}.

\begin{theorem}\label{t-closed} If $n\in \n$ is an even natural number, then there are closed $n$-elastic curves. Moreover, if $\mu\geq 0$ or
$$\mu<-\left(\frac{n+1}{n}\right)^\frac{n}{n+1}\,,$$
the curve is simple.
\end{theorem}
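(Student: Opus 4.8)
The plan is to use the machinery already developed: by Proposition \ref{pr2}, a curve with periodic $z$-component closes up precisely when $y(\rho)=0$, which for $n\neq-1$ is the integral closure condition \eqref{closure1}. Since $n$ is even, Proposition \ref{pr6}(1) tells us that reflection across the $y$-line preserves the equation for the \emph{same} $\mu$, so solutions are genuinely symmetric about $z=0$ and may cross the $y$-line. The first task is to show that for even $n$ one can arrange periodic curvature: by Proposition \ref{pr2} it suffices to exhibit initial data for which $\theta(s)$ sweeps through a full interval $[m\pi,(m+1)\pi]$. Taking $\theta_0=0$ (tangent horizontal) and an appropriate $z_0$, the phase-space picture from the level curves of $F$ in \eqref{F} should confirm that $(z,z')$ traces a closed orbit, forcing $z$ — and hence $\kappa=z^n+\mu$ — to be periodic of some period $\rho$.

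Next I would establish closure itself. Rather than verifying \eqref{closure1} for one lucky value of $\lambda$, I would treat $y(\rho)$ as a continuous function of the free parameter (say $z_0$, or equivalently $\lambda$ via \eqref{lambda}) and apply an intermediate-value argument. The idea is that as the parameter ranges over the family of periodic solutions, the net horizontal displacement $y(\rho)=-\int_0^\rho(\frac{1}{n+1}z^{n+1}+\mu z-\frac{n}{n+1}\lambda)\,ds$ changes sign. Because $n$ is even, the symmetry from Proposition \ref{pr6}(1) is the essential lever: for solutions symmetric about $z=0$, the odd powers of $z$ integrate in a controlled way, and at one end of the parameter range the displacement is positive while at the other it is negative, so a zero of $y(\rho)$ exists by continuity. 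This yields a closed $n$-elastic curve for every even $n$ and, I expect, for all $\mu$ by handling the three sign regimes of $\mu$ via Corollary \ref{n2} (which guarantees boundedness of $z$, hence genuine periodicity rather than escape to infinity).

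For the simplicity statement, I would argue via the vertical symmetry of Proposition \ref{pr1}: a closed curve with one period of periodic curvature consists of two congruent arcs reflected across a vertical line, so it is simple exactly when each half-arc is an embedded graph that meets the axis of symmetry only at its endpoints, with the two halves disjoint in between. The condition $\mu\geq0$ or $\mu<-\left(\frac{n+1}{n}\right)^{n/(n+1)}$ should be precisely the threshold guaranteeing that $\theta(s)$ is monotone over the half-period — equivalently that $\theta'=z^n+\mu$ does not change sign enough to create an extra loop. I would compute the relevant critical value of $\mu$ by locating where the level curve of $F$ first develops a self-tangency or where $\theta$ fails to be monotone; the exponent $n/(n+1)$ strongly suggests this comes from optimizing $z^{n+1}/(n+1)+\mu z$ (whose stationary value in $z$ produces exactly $\left(\frac{n+1}{n}\right)^{n/(n+1)}$ after setting the derivative $z^n+\mu=0$).

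The main obstacle will be the closure step: verifying that $y(\rho)$ actually changes sign across the parameter family, rather than staying one-signed. The symmetry $z\mapsto-z$ for even $n$ helps, but one must still control the competition between the $\frac{1}{n+1}z^{n+1}$ term and the $\mu z$ and constant terms inside the integral as the orbit in phase space grows or shrinks; showing the two limiting behaviors have opposite signs — likely by examining the degenerate limits where the orbit shrinks to an equilibrium (Proposition \ref{lines}) versus where it approaches the separatrix of $F$ — is where the real work lies. The simplicity criterion is secondary and should follow once the monotonicity of $\theta$ on a half-period is pinned to the stated bound on $\mu$.
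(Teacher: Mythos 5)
Your outline identifies the right supporting results but misses the single idea that makes the paper's proof work, and the step you yourself flag as ``where the real work lies'' is a genuine gap that the paper never has to confront. The paper does not prove closure by tuning a parameter until the displacement $y(\rho)$ in \eqref{closure1} changes sign. Instead it fixes the specific initial data $z_0=0$, $\theta_0=\pi/2$ (the curve starts \emph{on} the $y$-line with vertical tangent) and observes that, because $n$ is even, the functions $\bar y(s)=y(-s)$, $\bar z(s)=-z(-s)$, $\bar\theta(s)=\pi-\theta(-s)$ solve \eqref{eq1} with the same initial conditions; uniqueness then forces the trace of $\gamma$ to be symmetric about the $y$-line. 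Combined with the vertical symmetry of Proposition \ref{pr1}, this closes the curve with no integral condition to verify at all. Your intermediate-value scheme, by contrast, is left entirely open: you do not exhibit two ends of the parameter family where $y(\rho)$ has opposite signs, and for even $n$ with, say, $\mu>0$ the phase portrait has no equilibria and no separatrix, so the degenerate limits you propose to compare (orbit shrinking to an equilibrium versus approaching a separatrix) do not even exist in that regime. Note also that your appeal to Proposition \ref{pr6}(1) is off: that proposition says the \emph{reflected} curve is again a solution for the same $\mu$, not that a given solution with $\theta_0=0$ and $z_0>0$ is itself symmetric about $z=0$, so the claim that ``the odd powers of $z$ integrate in a controlled way'' has no basis for the orbits you are considering.

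The simplicity threshold suffers from a related omission. Your guess for the origin of the exponent $n/(n+1)$ is right in spirit, but the derivation requires the first integral \eqref{ode1} with a \emph{known} constant: with $z_0=0$ and $\theta_0=\pi/2$ one gets $c=0$, and then a first interior zero $s_1$ of $\theta'$ forces $\mu=-z_1^n$ and $\frac{n}{n+1}(-\mu)^{(n+1)/n}=\cos\theta_1\in(0,1]$, which is impossible precisely when $\mu<-\left(\frac{n+1}{n}\right)^{n/(n+1)}$; hence $\theta$ is monotone on the half-period and the curve is simple, while the case $\mu\geq 0$ is immediate since $\theta'=z^n+\mu\geq 0$ for even $n$. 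Without first pinning down the initial conditions --- and hence $c$ --- your ``self-tangency of the level curve of $F$'' criterion cannot produce the stated bound.
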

\begin{proof} The existence is obtained by choosing suitable conditions in \eqref{eq2}. Let $z_0=0$ and $\theta_0=\pi/2$. Recall that the maximal domain of the solution is $\r$. Define the functions 
\[\bar{y}(s)=  y(-s),\quad\quad \bar{z}(s)=  -z(-s),\quad\quad \bar{\theta}(s)= \pi-\theta(-s).\]
We check that  these functions satisfy \eqref{eq1}. For $\bar{y}$ and $\bar{z}$ is immediate. For $\bar{\theta}$, we have  
\[\bar{\theta}'(s)=\theta'(-s)=z(-s)^n+\mu=(-\bar{z}(s))^n+\mu=\bar{z}(s)^n+\mu,\]
where we have used that $n$ is even. Since the initial conditions at $s=0$ coincide with that of $\{y(s),z(s),\theta(s)\}$, uniqueness yields
\begin{align*}
y(s)&= y(-s),\\
 z(s)&=  -z(-s),\\
  \theta(s)&= \pi-\theta(-s).
  \end{align*}
This implies that the trace of $\gamma$ has a horizontal symmetry about the $y$-axis.  We also know from Proposition \ref{pr1} that $\gamma$ has a vertical symmetry, so we conclude that $\gamma$ is a closed curve. 

We now prove that if $\mu$ is nonnegative, then $\gamma$ is simple. Indeed, if $\mu\geq 0$, then $\theta$ is an increasing function. Then the monotonicity of $\theta$ together with the symmetries of $\gamma$ prove the claim.

In case that $\mu$ is negative, then the curves may be simple or not depending on the relation between $n$ and $\mu$. Due to $z_0=0$, we know that $\theta'(0)=\mu<0$, so $\theta$ is decreasing at $s=0$, i.e. it decreases from $\theta(0)=\theta_0=\pi/2$. Let $s_0>0$ be the first value where $\gamma$ is horizontal, so $\theta(s_0)=0$. Suppose that $s_1>0$ and $s_1\leq s_0$ is the first value, if it exists, such that $\theta'(s_1)=0$ and let $z_1=z(s_1)$ and $\theta_1=\theta(s_1)$.  From \eqref{ode1} at $s=0$, we deduce that $c=0$. On the other hand, 
$\theta'(s_1)=0=z_1^n+\mu$, hence $\mu=-z_1^n$. Then \eqref{ode1} gives  
$$\frac{n}{n+1}(-\mu)^{\frac{n+1}{n}}=\cos\theta_1\in (0,1].$$
For fixed $n$, above equality is not possible if
$$\frac{n}{n+1}(-\mu)^{\frac{n+1}{n}}>1\,.$$ 
This proves that, in those cases, $\theta'$ cannot vanish so $\theta$ is a decreasing function and $\theta(s_0)=0$. Thus, $\theta$ is monotonic and the curve $\gamma$ is simple.
\end{proof}

The initial conditions chosen in the proof of Theorem \ref{t-closed} imply that the Lagrange multiplier $\lambda$ is zero, that is, there is no constraint on the length of the curves. Thus critical curves for this case are usually referred to as \emph{free} $n$-elastic curves. The assumption $\lambda=0$ is quite reasonable since (free) $n$-elastic curves for $n\in\n$ cut the $y$-line perpendicularly. Clearly, this is a necessary condition for the associated curve to close. Moreover, we also need that after that point the curve goes backwards in the $y$-direction, instead of forward. From the parameterization \eqref{pc11}, we see that this happens, precisely, when $n$ is even. The existence of closed $n$-elastic curves other than pseudo-lemniscates is a huge difference, that deserves to be pointed out, between the cases $n$ odd (including the family of classical elastic curves) and $n$ even. Special mention deserves the existence of simple closed ones.

\begin{remark} For $n\in\n$ even, the variational characterization of Theorem \ref{crit} is local. Indeed, the energy $\mathbf{\Theta}$ acts on the space of curves satisfying $\kappa>\mu$. Therefore, in particular, the cuts with the $y$-line of the closed curves of Theorem \ref{t-closed} cannot be included. This prevents a contradiction with a rescaling argument to prove the nonexistence of closed critical curves for $\lambda=0$.
\end{remark}

\section{Classification of $n$-elastic curves}\label{5}

In this section we give a classification of all shapes of $n$-elastic curves with nonconstant curvature according to the parameters $n$ and $\mu$. 
Once we know $\theta$ in \eqref{eq1}, the function $y$ is completely determined, and consequently we may simplify \eqref{eq1} by the autonomous system  
$$\left\{\begin{split}
	\theta'&=z^n+\mu\\
	z'&=\sin\theta
\end{split}\right.\,.$$
We will investigate the phase portrait associated to this system.   Define  the vector field 
$$V(\theta,z)=(V_1(\theta,z),V_2(\theta,z))=(z^n+\mu,\sin\theta)$$
 whose orbit space is equivalent to that of $F$ in \eqref{F}. The orbits $(\theta(s),z(s))$ make a foliation by regular proper $C^1$ curves of the phase space except at the equilibrium points, which represent horizontal straight lines.
 
Let $\Omega$ be the domain of $V$. When $n\not\in\n$, $\Omega=\r\times (0,\infty)$ and if $n\in\n$, then $\Omega=\r^2$. In the first case, $\L=\r\times\{0\}$ will denote the boundary of $\Omega$. Let us observe that in the  phase portrait, the level curves are periodic in the $\theta$-direction with period $2\pi$.  
 
A special case will occur when the level curve is defined for any $s$ and, in addition, it is a graph on $\L$. Then the rank of $\theta$ is $\r$ and Proposition \ref{pr2} implies that $\gamma$ is either closed or invariant by a discrete group of horizontal translations. In this case, we obtain either orbitlike $n$-elastic curves (Figure \ref{figtipos}, (a) and Figure \ref{figtipos2}, (a)-(b)) or wavelike $n$-elastic curves (see Figure \ref{figtipos}, (c)-(h)). Apart from the closed curves obtained in Theorem \ref{t-closed} (Figure \ref{cerradas}), we also have pseudo-lemniscates (Figure \ref{figtipos}, (d)).

On the other hand, if the rank of $\theta$ is not $\r$, we may find complete curves with nonperiodic curvature $\kappa$. For instance, if the associated orbit $(\theta(s),z(s))$ goes to an equilibrium point we have borderline $n$-elastic curves (Figure \ref{figtipos}, (b)). By the uniqueness of the initial value problem, in this case, the parameter $s$ goes to $\pm\infty$. If the associated orbit does not approach an equilibrium point, these curves are catenary-like $n$-elastic curves (Figure \ref{figtipos2}, (c)-(h)). A complete deformation of $n$-elastic curves obtained varying $\lambda$, which goes from orbitlike to catenary-like curves, has been shown in Figure \ref{figtipos2}.

In other cases, the level curve meets $\L$ at two points (by the symmetry of Proposition \ref{pr1}) and the maximal domain of $\gamma$ is a bounded interval. See Figure \ref{cuts}.

The classification of $n$-elastic curves will consist in analyzing the geometry of the level curves of the vector field $V$. If we find the equilibrium points of $V$, $V(\theta,z)=(0,0)$, we deduce that $\theta=m\pi$, $m\in\z$. Thus the existence of equilibrium points depends on the equation $z^n+\mu=0$.  Whenever they exist, equilibrium points will be $\{(m\pi,\pm(-\mu)^{1/n}):m\in\z\}$. If $n\notin\n$ the $z$-coordinate is always positive so only the positive sign in the equilibrium points should be considered. Moreover, if $n\in\n$, and from Proposition \ref{pr6}, the equilibrium points $(m\pi,-(-\mu)^{1/n})$ can be viewed as the reflection about the $y$-line of those with positive $z$-coordinate. Therefore, we will restrict ourself to the positive sign. 

Let us compute now the partial derivatives of $V$, 
\begin{equation}\label{matriz}
\left(\begin{array}{cc}
\frac{\partial V_1}{\partial\theta}  &  \frac{\partial V_1}{\partial z} \\
& \\
\frac{\partial V_2}{\partial\theta} & \frac{\partial V_2}{\partial z} 
\end{array}\right)=\left(\begin{array}{cc}
0 &  nz^{n-1} \\
\cos\theta & 0
\end{array}\right).
\end{equation}
Depending on the sign of $\mu$ we have the following types of equilibrium points:
\begin{enumerate}
\item Case $\mu>0$. Then $n\in\z$ is odd and equilibria are unstable saddle points if $n>0$ and $m$ is even (or if $n<0$ and $m$ is odd) and centers if $n>0$ and $m$ is odd (or if $n<0$ and $m$ is even).
\item Case $\mu=0$. Then $n\in\n$. If $n=1$, we have unstable saddle points if $m$ is even and centers if $m$ is odd. If $n>1$, the matrix \eqref{matriz} is not diagonalizable with both the trace and the determinant vanishing.
\item Case $\mu<0$. Equilibria are unstable saddle points if $n>0$ and $m$ is even (or if $n<0$ and $m$ is odd) and centers if $n>0$ and $m$ is odd (or if $n<0$ and $m$ is even).
\end{enumerate}

We next distinguish between the cases $n\in\z$ even, $n\in\z$ odd and $n\notin\z$:

\begin{theorem}[Case $n\in\z$ even] Let $\gamma$ be a $n$-elastic curve with $n\in\z$ even. Then, either $\gamma$ is one of the closed curves of Theorem \ref{t-closed} (necessarily $n>0$) or, depending on the values of $\mu$ we have:
\begin{enumerate}
\item Case $\mu>0$. Then $\gamma$ is an orbitlike $n$-elastic curve.
\item Case $\mu=0$. Then $\gamma$ is either an orbitlike or a catenary-like $n$-elastic curve (the latter is only possible if $n<0$).
\item Case $\mu<0$. Then $\gamma$ is either an orbitlike, a borderline or a wavelike $n$-elastic curve.
\end{enumerate}
\end{theorem}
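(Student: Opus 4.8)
The plan is to classify the $n$-elastic curves with $n$ even by reading off the global geometry of the orbits of the autonomous system
$$\theta'=z^n+\mu\,,\qquad z'=\sin\theta$$
from the structure of its equilibria, which has already been determined above in \eqref{matriz} and the ensuing case analysis. Since the level curves are $2\pi$-periodic in $\theta$, it suffices to understand a single horizontal strip $\theta\in[0,2\pi]$, and since $n$ is even, Proposition \ref{pr6}(1) lets me restrict to $z\geq 0$, the reflection about the $y$-line producing the $z\leq 0$ part of any trajectory crossing $\mathbb{L}=\r\times\{0\}$ (which is precisely how the closed curves of Theorem \ref{t-closed} arise). So the first step is to split off the closed curves of Theorem \ref{t-closed}, which correspond exactly to the special orbit through $z_0=0$, $\theta_0=\pi/2$, and then to classify the remaining orbits, all of which stay in $z>0$ when $n<0$ (by Corollary \ref{n1}) or avoid that distinguished orbit when $n>0$.

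Next I would treat each sign of $\mu$ separately, using the dichotomy from Proposition \ref{pr2}: an orbit that is a graph over $\mathbb{L}$ defined for all $s$ gives $\theta$ ranging over all of $\r$ and hence an orbitlike or wavelike curve, whereas an orbit asymptotic to an equilibrium gives a borderline curve, and an orbit neither periodic nor asymptotic to an equilibrium gives a catenary-like curve. For $\mu>0$ there are no equilibria (since $n$ even forces $z^n+\mu>0$, so $\theta'>0$ always), hence every orbit is a monotone graph with $\theta$ increasing through all of $\r$; by Proposition \ref{pr2} the curve is orbitlike, giving case (1). For $\mu=0$ the only equilibria lie on $\mathbb{L}$ and are degenerate (the trace and determinant of \eqref{matriz} both vanish); here $\theta'=z^n\geq 0$, so $\theta$ is nondecreasing, and I would argue that orbits with $z$ bounded away from $0$ are orbitlike while, for $n<0$, orbits with $z\to\infty$ (permitted by Corollary \ref{n2} exactly when $n<-1$) escape to infinity without $\theta$ completing a full interval $[m\pi,(m+1)\pi]$, yielding catenary-like curves; this gives case (2), with the catenary-like alternative only for $n<0$. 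For $\mu<0$ the equilibria at $\theta=m\pi$, $z=(-\mu)^{1/n}$ are saddles or centers according to the parity of $m$, and the standard phase-plane picture (centers surrounded by closed loops, saddles joined by separatrices) produces wavelike curves inside the loops, borderline curves along the separatrices, and orbitlike curves outside, giving case (3).

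The main obstacle I expect is the $\mu=0$, $n<0$ subcase, where the equilibria on $\mathbb{L}$ are degenerate and the linearization \eqref{matriz} gives no information about the local flow. There I cannot simply invoke the Hartman--Grobman picture; instead I would analyze the orbits directly using the first integral \eqref{ode1}, namely $\frac{1}{n+1}z^{n+1}+\epsilon\sqrt{1-(z')^2}=c$ with $\mu=0$, to decide whether a given level set is a bounded loop, an unbounded graph, or asymptotic to the boundary $\mathbb{L}$. The key quantitative point is that for $n<-1$ the term $z^{n+1}\to 0$ as $z\to\infty$ so the level sets can run off to $z=\infty$ (the catenary-like case), whereas the behavior near $z=0$ is governed by $z^{n+1}\to\infty$, forcing the orbit away from $\mathbb{L}$ and precluding any closure through the $y$-line; reconciling these two ends is what separates orbitlike from catenary-like and is the delicate part of the argument. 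The remaining subcases reduce to reading off signs of $\theta'$ and applying Propositions \ref{pr1}, \ref{pr2} and Corollaries \ref{n1}, \ref{n2}, which are routine.
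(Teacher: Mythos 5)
Your proposal is correct and follows essentially the same route as the paper: reduce to the phase portrait of the autonomous system $(\theta',z')=(z^n+\mu,\sin\theta)$, split off the closed curves of Theorem \ref{t-closed}, and read off the orbitlike, wavelike, borderline and catenary-like alternatives from the equilibrium analysis case by case in the sign of $\mu$. The only difference is one of justification rather than of method: where the paper appeals to the computed phase portraits (Figures \ref{figlevelneven1} and \ref{figlevelneven2}), you propose to derive the orbit structure analytically from the first integral \eqref{ode1}, which is a sound way to handle the degenerate equilibria in the case $\mu=0$.
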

\begin{proof}
In all the cases, level curves are defined on $\r$ so all $n$-elastic curves are complete. As shown in Theorem \ref{t-closed}, when the initial conditions are $\theta_0=\pi/2$ and $z_0=0$, the $n$-elastic curve is closed for any value of $\mu$. Some of these curves are also simple. 

Apart from these cases, phase portraits for the case $\mu>0$, both with $n>0$ and $n<0$, present level curves that are entire graphs on $\L$ (Fig. \ref{figlevelneven1} and \ref{figlevelneven2}, (a)), obtaining orbitlike $n$-elastic curves. 
 
If $\mu=0$ and $n>0$, the level curves are again entire graphs on $\L$ (Fig. \ref{figlevelneven1}, (b)), obtaining orbitlike $n$-elastic curves. If $\mu=0$ and $n<0$, the level curves are graphs on $\L$ when the value $z_0$ is close to $0$ (Fig. \ref{figlevelneven2}, (b)) and, hence, we have curves of orbitlike type. However, if $z_0$ increases and $\theta_0=0$, level curves are graphs on small bounded intervals of $\L$. This means that $\gamma$ is of catenary-type, which may be simple or not. In case that the curve is simple, the curve must be convex.  

Finally, assume $\mu<0$ (Fig. \ref{figlevelneven1} and \ref{figlevelneven2}, (c)). Orbitlike $n$-elastic curves appear when $z_0$ is sufficiently big ($n>0$) or close to zero ($n<0$). Around the critical points of $V$ which are centers, and whenever $z_0$ is closed to the value of the critical point, level curves correspond with wavelike $n$-elastic curves because the rank of $\theta$ lies on a bounded interval. Moreover, there are also borderline $n$-elastic curves asymptotic to the horizontal straight line of the equilibrium point of saddle type.  
\end{proof}

\begin{figure}[hbtp]
\centering
\makebox[\textwidth][c]{
\begin{subfigure}[b]{0.3\linewidth}
\includegraphics[width=\textwidth]{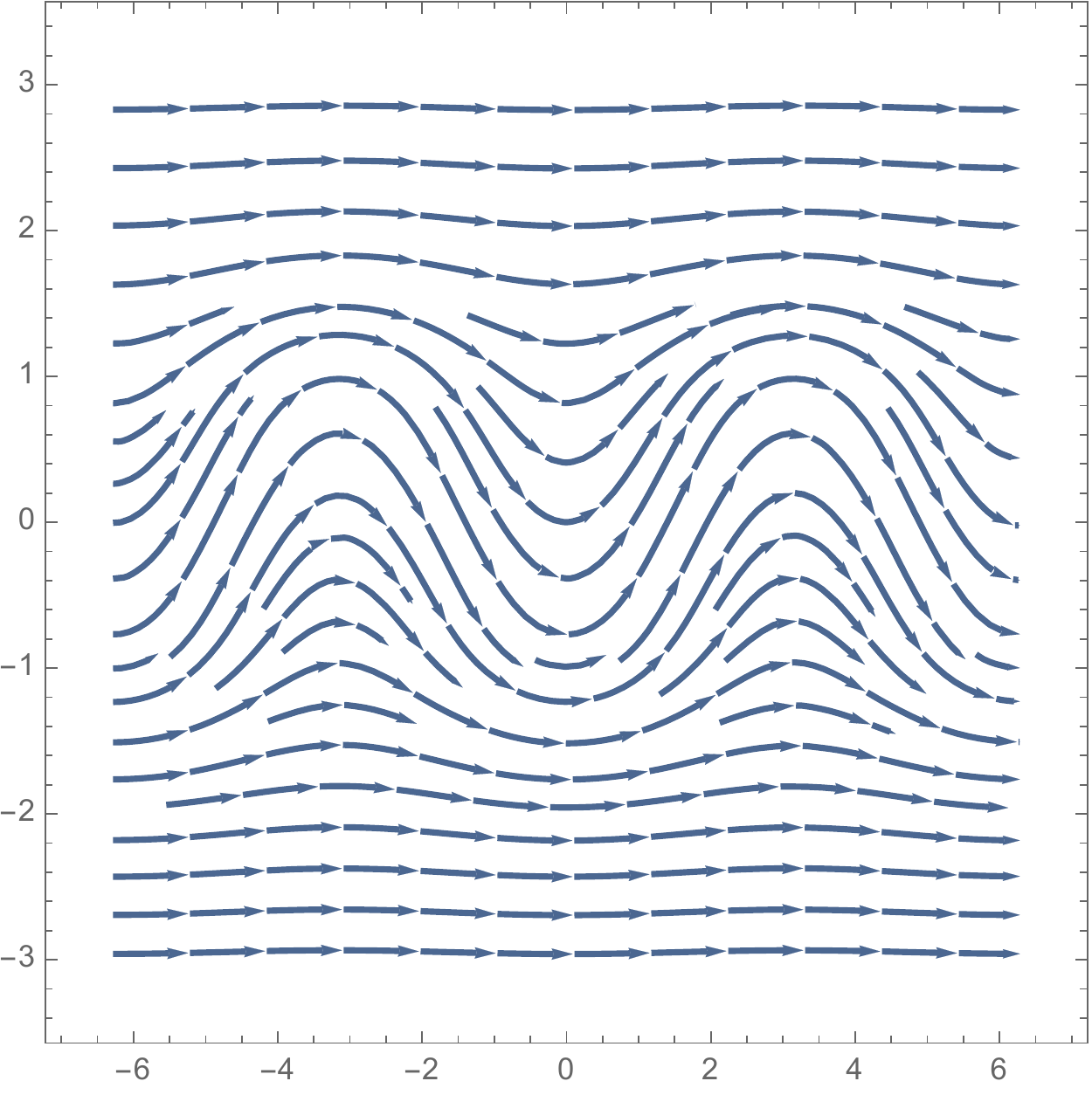}
\caption{$\mu=1$}
\end{subfigure}
\,\quad
\begin{subfigure}[b]{0.3\linewidth}
\includegraphics[width=\textwidth]{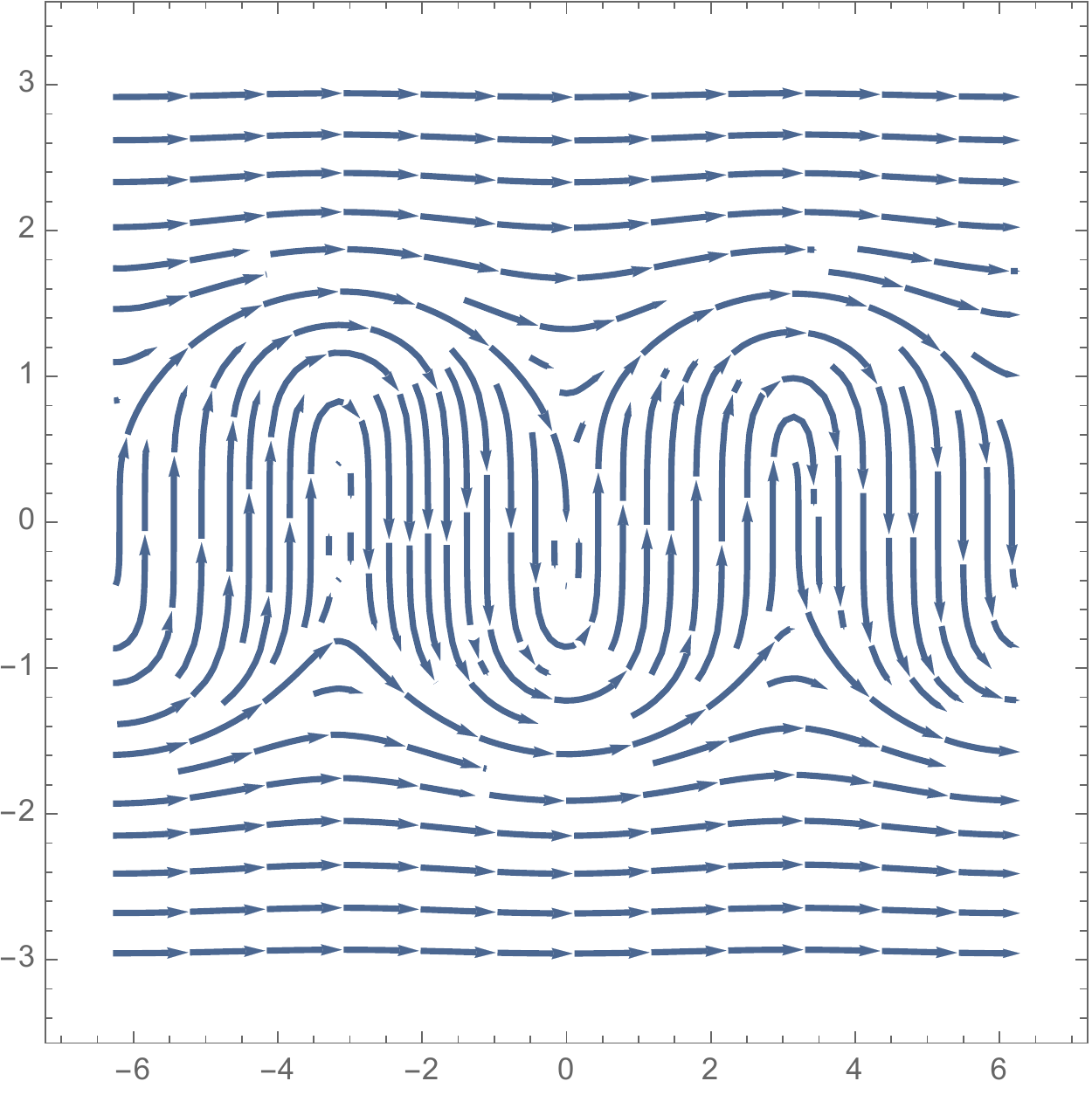}
\caption{$\mu=0$}
\end{subfigure}
\,\quad
\begin{subfigure}[b]{0.3\linewidth}
\includegraphics[width=\textwidth]{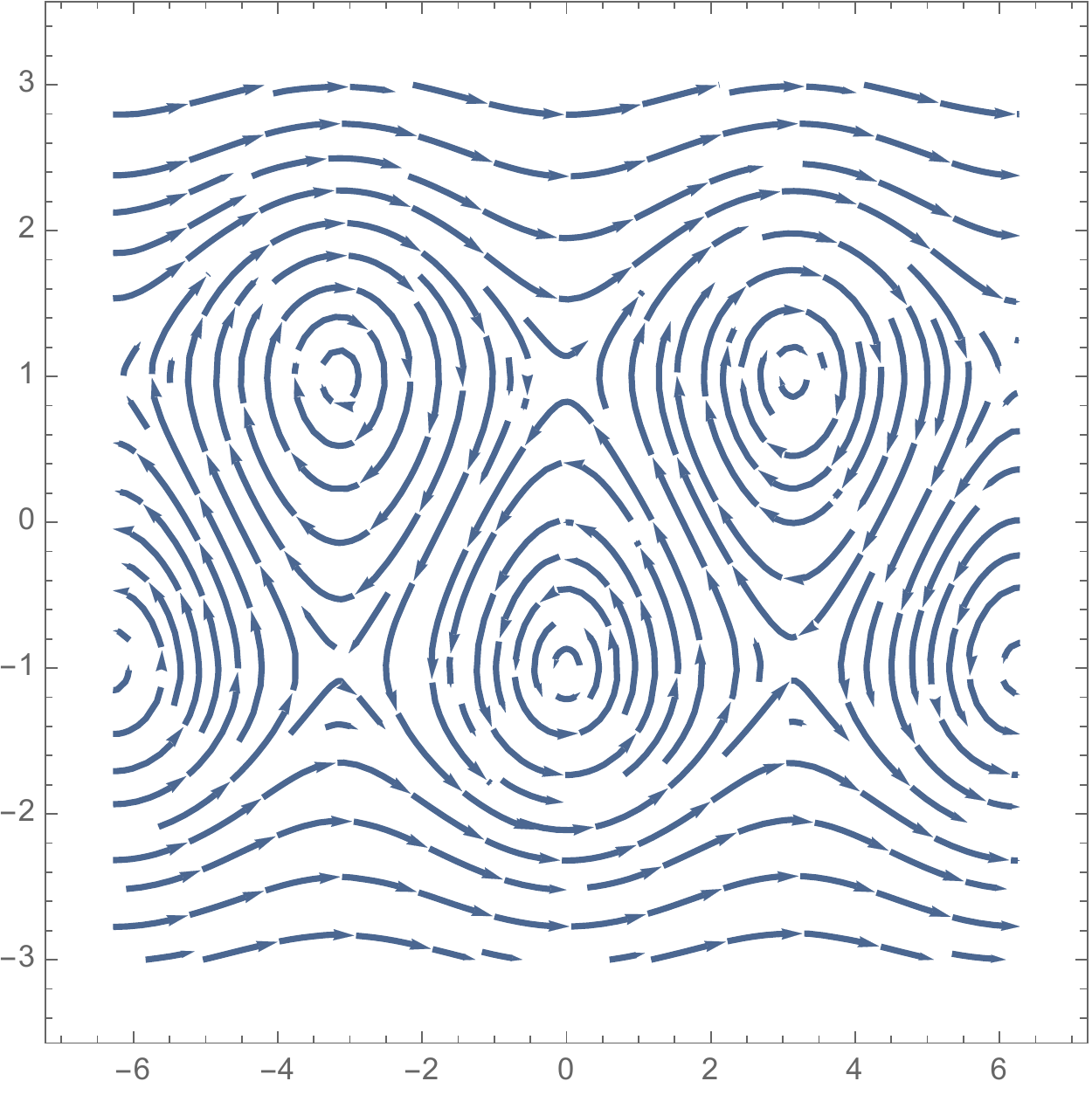}
\caption{$\mu=-1$}
\end{subfigure}}
\caption{Level curves of $V(u,v)$ for the case $n>0$ even. Here $n=4$.}\label{figlevelneven1}
\end{figure}

\begin{figure}[hbtp]
\centering
\makebox[\textwidth][c]{
\begin{subfigure}[b]{0.3\linewidth}
\includegraphics[width=\textwidth]{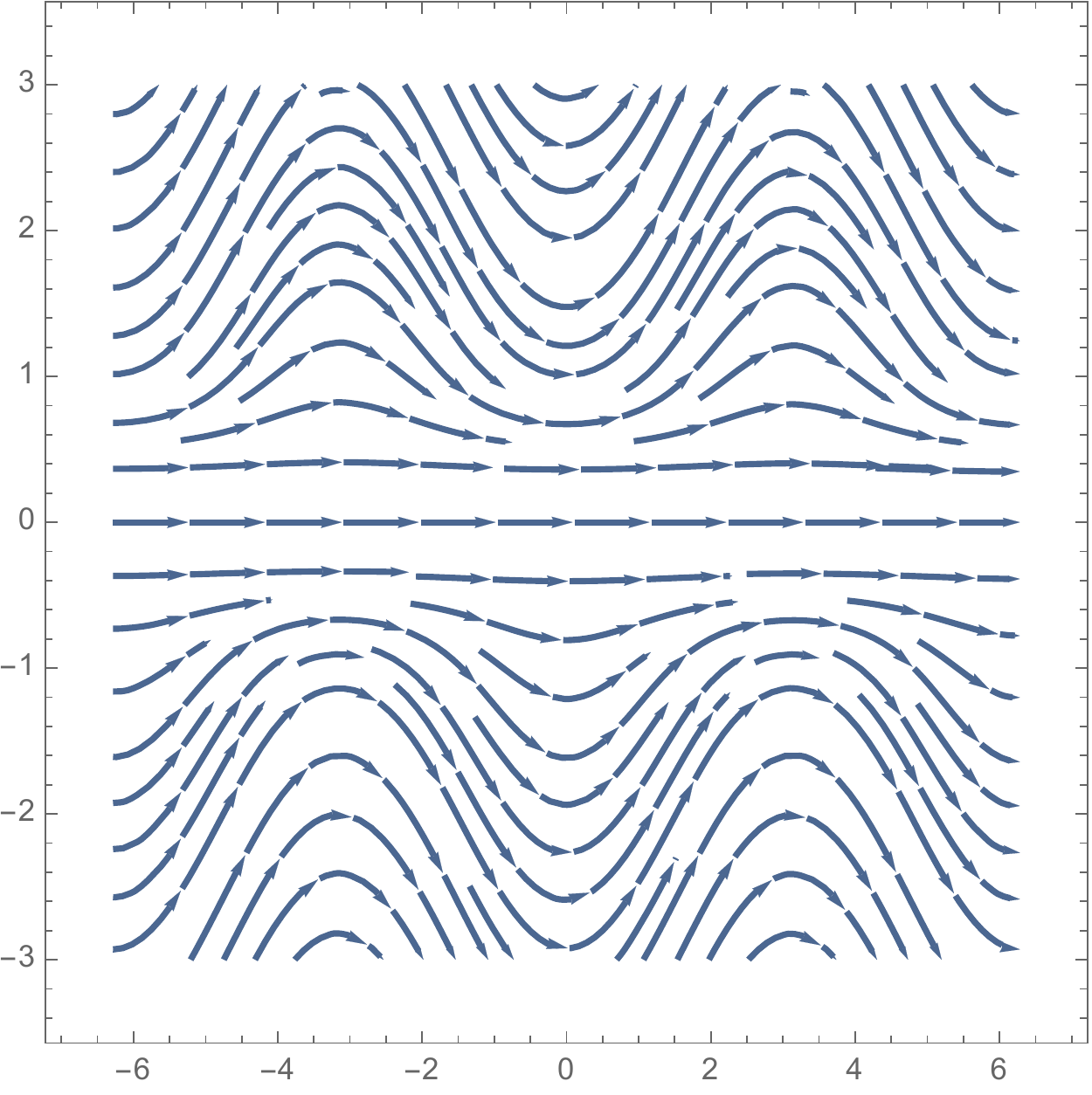}
\caption{$\mu=1$}
\end{subfigure}
\,\quad
\begin{subfigure}[b]{0.3\linewidth}
\includegraphics[width=\textwidth]{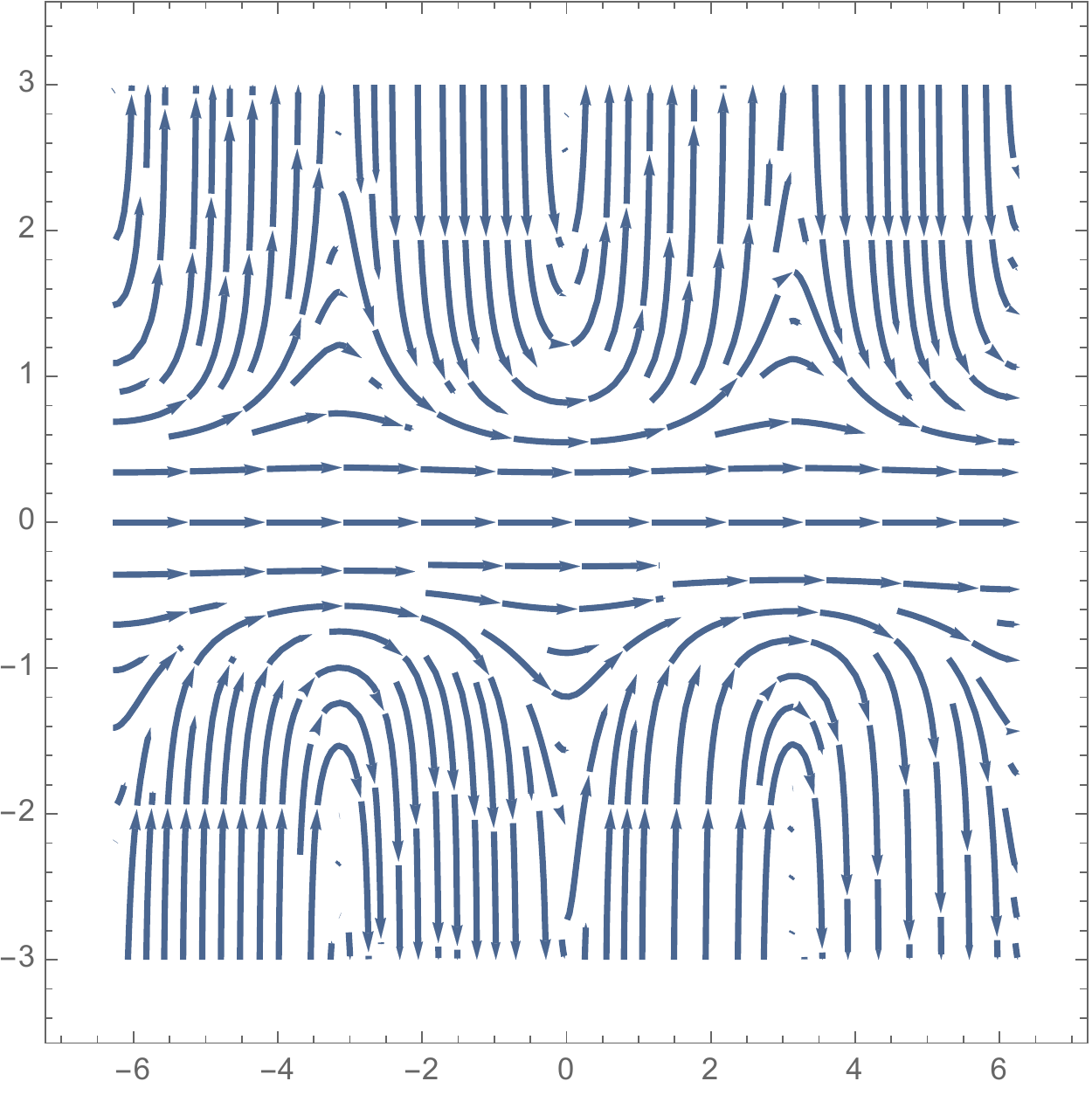}
\caption{$\mu=0$}
\end{subfigure}
\,\quad
\begin{subfigure}[b]{0.3\linewidth}
\includegraphics[width=\textwidth]{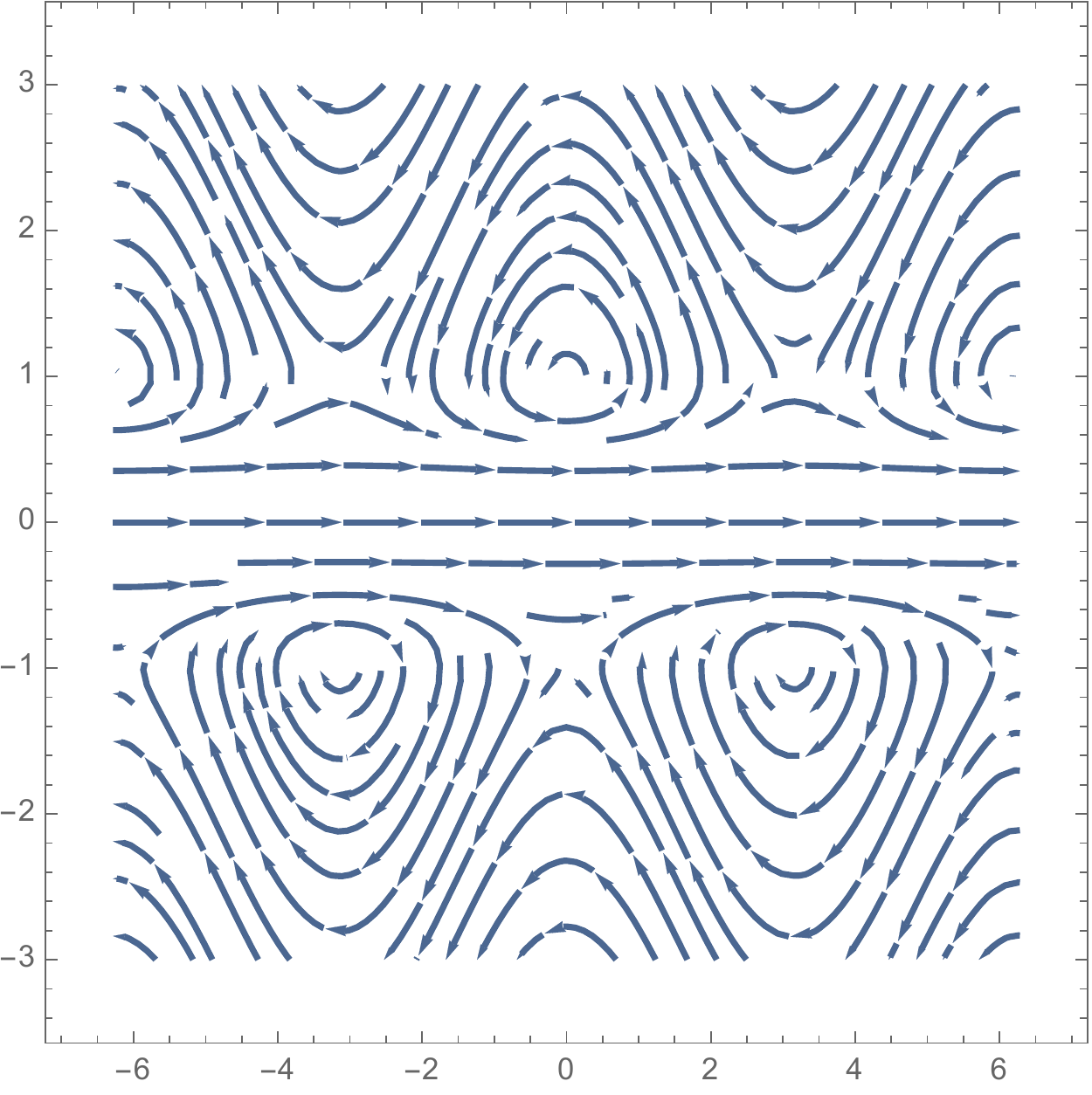}
\caption{$\mu=-1$}
\end{subfigure}}
\caption{Level curves of $V(u,v)$ for the case $n<0$ even. Here $n=-4$.}\label{figlevelneven2}
\end{figure}

\begin{theorem}[Case $n\in\z$ odd] Let $\gamma$ be a $n$-elastic curve with $n\in\z$ odd. Then:
\begin{enumerate}
\item If $n<0$ and $\mu=0$, $\gamma$ is either an orbitlike or a catenary-like $n$-elastic curve.
\item In the rest of the cases, $\gamma$ is either an orbitlike, a borderline or a wavelike $n$-elastic curve. 
\end{enumerate}
\end{theorem}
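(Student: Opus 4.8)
The plan is to classify the shapes of $n$-elastic curves for $n\in\z$ odd by a phase-plane analysis of the autonomous system
$$\left\{\begin{split}
\theta'&=z^n+\mu\\
z'&=\sin\theta
\end{split}\right.\,,$$
exactly as done in the preceding (even $n$) theorem, but now taking advantage of the fact that for odd $n$ the domain of the vector field $V(\theta,z)=(z^n+\mu,\sin\theta)$ is all of $\r^2$ when $n>0$ and $\r\times(\r\setminus\{0\})$ when $n<0$, and that $z^n$ changes sign with $z$. First I would record that the solutions are complete: since $n\in\z$, the maximal domain of \eqref{eq1} is $\r$, so every $n$-elastic curve here is complete and there is no possibility of the noncomplete curves of Figure \ref{cuts}. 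Next I would invoke Propositions \ref{pr6} and \ref{pr1} to reduce attention to the half-plane $z\geq 0$ (or $z>0$ when $n<0$), using the horizontal symmetry (which for $n$ odd flips the sign of $\mu$) together with the vertical symmetry, so that the level-curve picture in $z>0$ determines the whole curve.

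The heart of the argument is the equilibrium classification already computed above from the Jacobian \eqref{matriz}: for $\mu>0$ and $\mu<0$ one obtains alternating saddle points and centers along each line $z=\pm(-\mu)^{1/n}$, while for $\mu=0$ (which forces $n\in\n$, hence here $n=1$) one has the classical elastica picture with saddles and centers. I would then match each type of orbit of $V$ to a named shape precisely as in the even case: orbits that are entire graphs over $\L=\r\times\{0\}$ in the $\theta$-direction give orbitlike curves (the rank of $\theta$ is all of $\r$, so Proposition \ref{pr2} applies and one gets curves that are closed or invariant under a horizontal-translation group); closed orbits encircling a center give wavelike curves, since the rank of $\theta$ then lies in a bounded interval and $\kappa=z^n+\mu$ oscillates between $\pm\kappa_0$; and the separatrices through the saddle points, along which the orbit tends to an equilibrium as $s\to\pm\infty$, give the borderline curves asymptotic to a horizontal $n$-elastic line. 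This accounts for the generic conclusion, item (2): for all the cases except the single exceptional one, the three types orbitlike/borderline/wavelike exhaust the possibilities.

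The genuinely separate case is item (1), $n<0$ odd with $\mu=0$. Here the equation $z^n+\mu=0$ has no solution, so $V$ has no equilibrium points at all, and consequently there are neither centers nor saddles: no wavelike and no borderline curves can occur. I would argue that for $z_0$ close to $0$ the orbit is an entire graph over $\L$ (the $\theta$-direction has full range), yielding orbitlike curves, whereas for larger $z_0$ with $\theta_0=0$ the orbit is a graph over only a bounded $\theta$-interval — the orbit does not approach any equilibrium and $\theta$ has bounded range — which by the terminology of the introduction is exactly a catenary-like curve (recovering the catenary itself when $n=-2$, though that value is even; for odd negative $n$ one gets the analogous nonperiodic, nonborderline graphs). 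The main obstacle I anticipate is making rigorous the dichotomy in this $\mu=0$, $n<0$ case, namely showing cleanly that the range of $\theta$ along an orbit is either all of $\r$ (orbitlike) or a bounded interval (catenary-like), with no intermediate behavior; this requires controlling $\theta'=z^n$ as $z\to\infty$ and as $z\to 0^+$, using Corollaries \ref{n1} and \ref{n2} to rule out the orbit reaching $\L$ or escaping to $z=\infty$ in finite parameter, much as in the proof of Proposition \ref{pr1}. The remaining cases ($\mu\neq 0$, any odd $n$, and $\mu=0$ with $n=1$) reduce to the standard saddle/center phase-portrait reading and should present no difficulty beyond drawing the corresponding level-curve figures.
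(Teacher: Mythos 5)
Your overall strategy --- completeness from $n\in\z$, reduction to $z\ge 0$ via Propositions \ref{pr6} and \ref{pr1}, and then reading off orbitlike/wavelike/borderline (resp.\ orbitlike/catenary-like when there are no equilibria) from the phase portrait of $V(\theta,z)=(z^n+\mu,\sin\theta)$ --- is exactly the paper's approach, and your treatment of item (1) ($n<0$ odd, $\mu=0$: no zeros of $z^n+\mu$, hence no centers or saddles, hence no wavelike or borderline curves) coincides with the paper's argument. However, there is one concrete slip in your case enumeration: you assert that $\mu=0$ ``forces $n\in\n$, hence here $n=1$,'' and your final list of remaining cases is ``$\mu\neq 0$, any odd $n$, and $\mu=0$ with $n=1$.'' The condition $z^n=0$ having a solution forces $n>0$, but among odd integers this leaves all of $n=1,3,5,\dots$, so the cases $n\ge 3$ odd with $\mu=0$ (e.g.\ $n=3$, Fig.\ \ref{figlevelodd1}(b)) are simply missing from your proof. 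They belong to item (2) of the statement and the paper covers them together with the generic cases.

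This omission matters beyond bookkeeping because for $n\ge 3$ the equilibria $(m\pi,0)$ are degenerate: the Jacobian \eqref{matriz} has $nz^{n-1}=0$ at $z=0$, so both its trace and determinant vanish (the paper explicitly flags this in its list of equilibrium types), and the ``standard saddle/center phase-portrait reading'' from the linearization that you rely on does not apply there. The fix is to classify these equilibria through the first integral instead: near $(m\pi,0)$ the level sets of $F(z,z')=\frac{1}{n+1}z^{n+1}+\epsilon\sqrt{1-(z')^2}$ behave like those of $\frac{1}{n+1}z^{n+1}\mp\frac{1}{2}(z')^2$ with $n+1$ even, which still produces a (degenerate) saddle for one sign of $\epsilon$ and a center for the other, so the orbitlike/borderline/wavelike trichotomy of item (2) persists. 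With that case added and argued via $F$ rather than \eqref{matriz}, your proof matches the paper's. (A smaller imprecision: for odd $n$ the equation $z^n+\mu=0$ has a single real root, so the equilibria lie on one horizontal line $z=(-\mu)^{1/n}$ per period in $\theta$, not on both lines $z=\pm(-\mu)^{1/n}$.)
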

\begin{proof} As in the even case, all level curves are defined on $\r$ so associated $n$-elastic curves are complete. Since $n$ is odd, for any value of $\mu$ there are critical points of $V$, with the exception of $n<0$ and $\mu=0$. For the rest of the cases (Fig. \ref{figlevelodd1}, (a)-(c), and Fig. \ref{figlevelodd2}, (a) and (c)) and for values $z_0$ sufficiently big (if $n>0$) or close to zero (if $n<0$), level curves of $V$ are entire graphs on $\L$, hence, the corresponding $n$-elastic curve is of orbitlike type. The level curves around the critical points which are centers represent $n$-elastic curves on which the angle $\theta$ varies in some bounded interval, thus, they are wavelike $n$-elastic curves. Moreover, there are orbits approaching saddle critical points and, hence, representing borderline $n$-elastic curves.
 
In the case $n<0$ and $\mu=0$ (Fig. \ref{figlevelodd2}, (b)), besides orbitlike $n$-elastic curves ($z_0$ sufficiently close to zero), we also have catenary-like $n$-elastic curves when $\theta_0=0$ for values $z_0$ far from zero. In this case, there are no critical points and so there are not borderline nor wavelike $n$-elastic curves.
\end{proof}

\begin{figure}[hbtp]
\centering
\makebox[\textwidth][c]{
\begin{subfigure}[b]{0.3\linewidth}
\includegraphics[width=\textwidth]{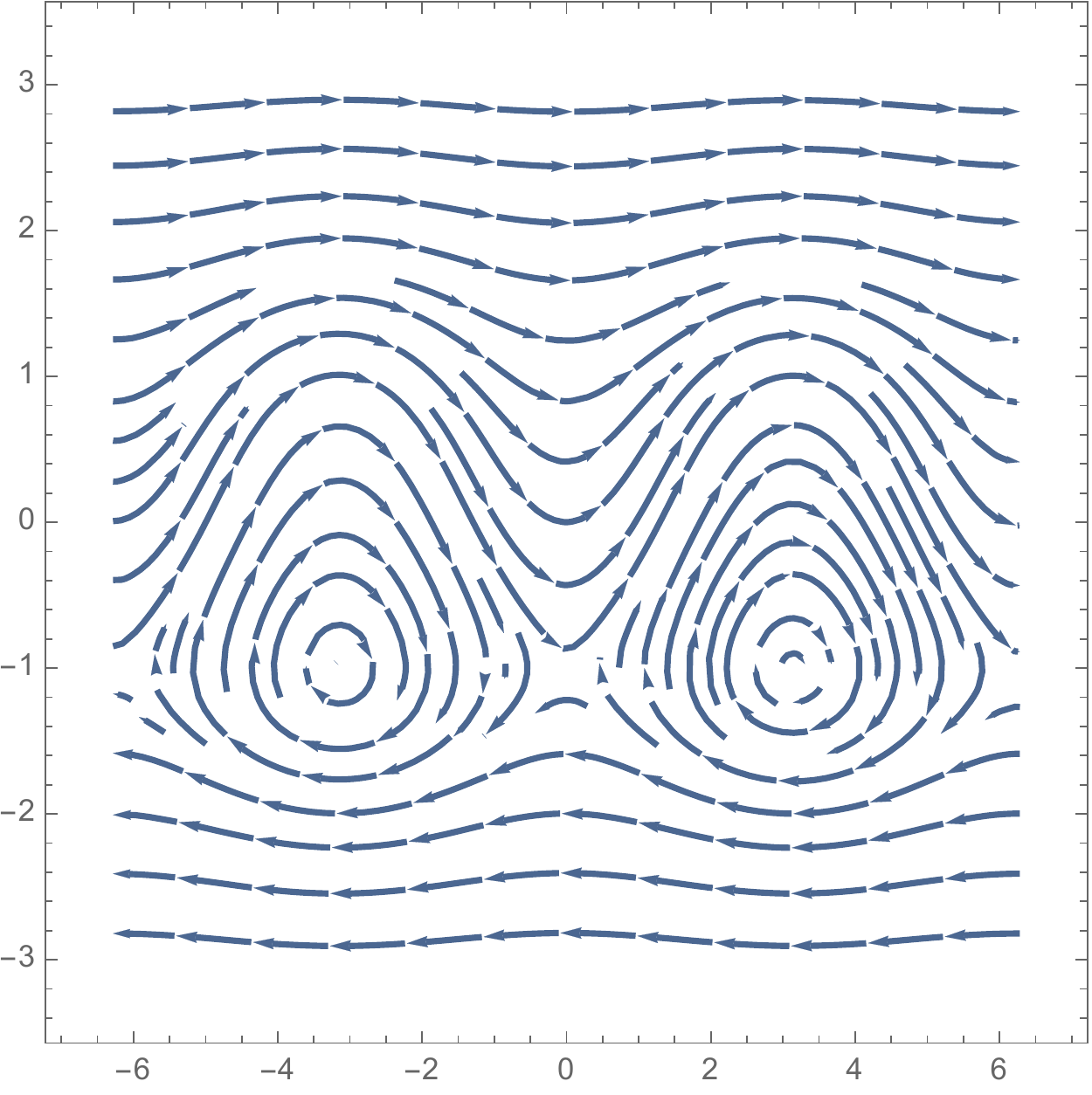}
\caption{$\mu=1$}
\end{subfigure}
\,\quad
\begin{subfigure}[b]{0.3\linewidth}
\includegraphics[width=\textwidth]{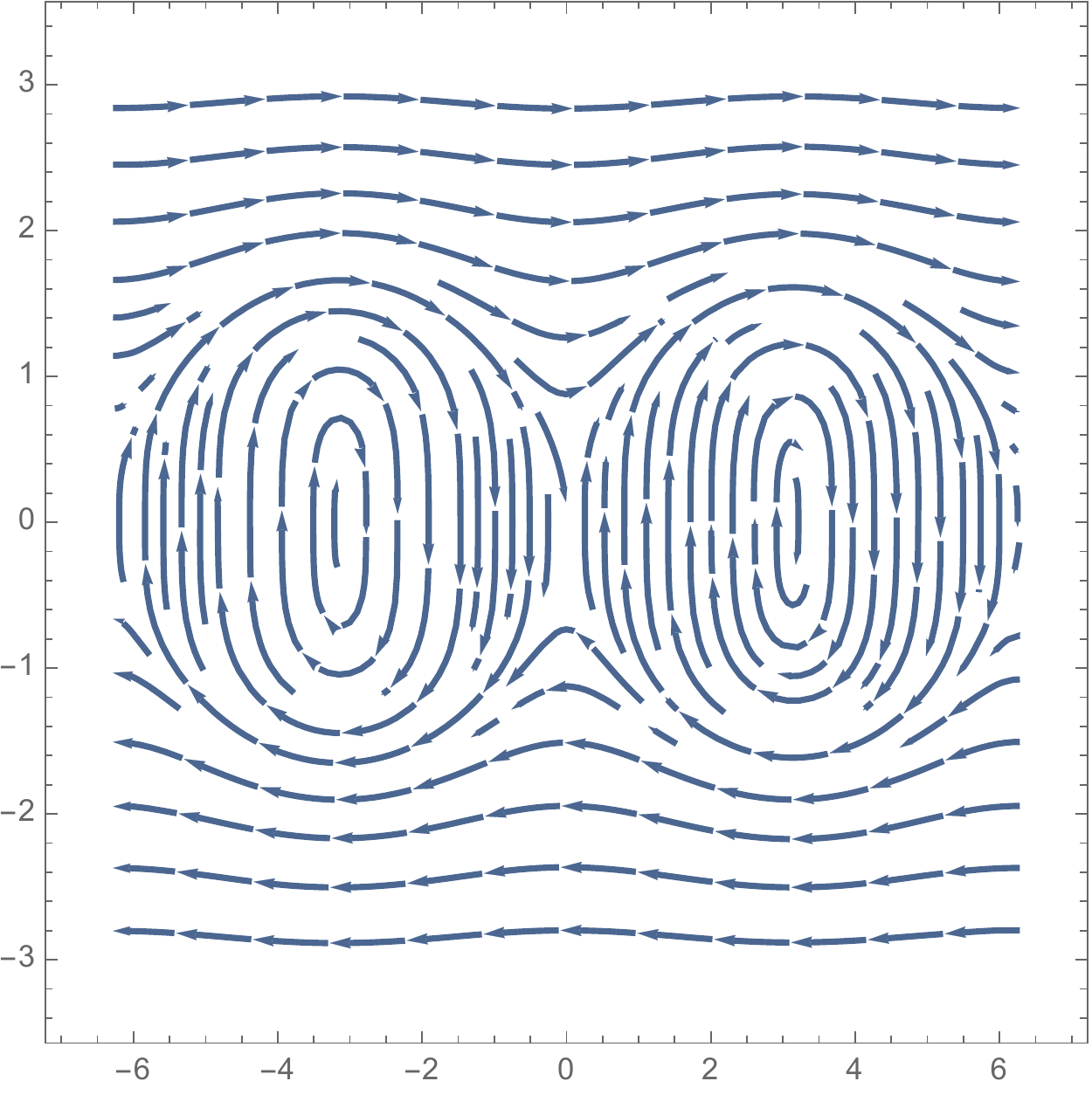}
\caption{ $\mu=0$}
\end{subfigure}
\,\quad
\begin{subfigure}[b]{0.3\linewidth}
\includegraphics[width=\textwidth]{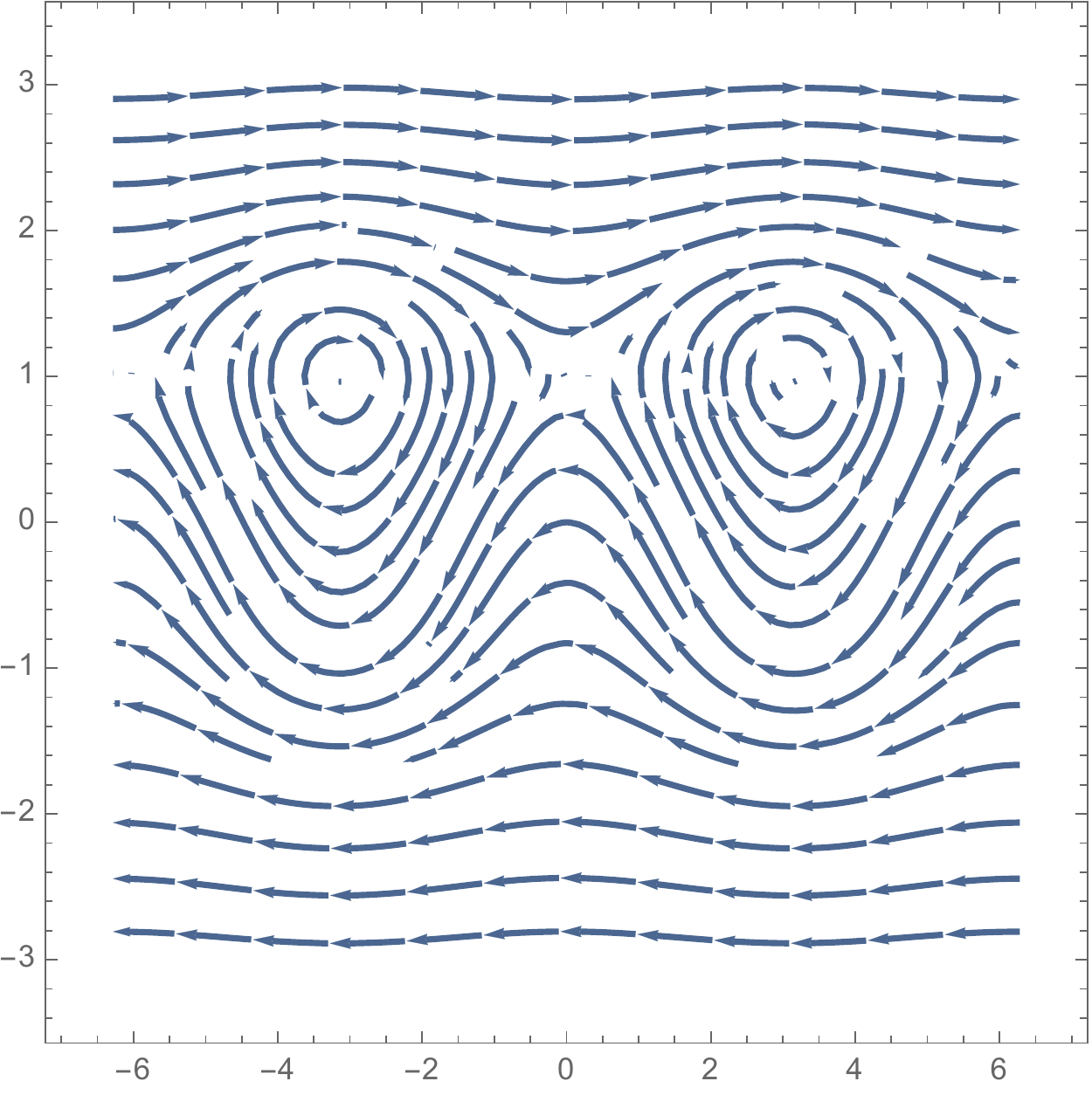}
\caption{$\mu=-1$}
\end{subfigure}}
\caption{Level curves of $V(u,v)$ for the case $n>0$ odd. Here $n=3$.}\label{figlevelodd1}
\end{figure}

\begin{figure}[hbtp]
\centering
\makebox[\textwidth][c]{
\begin{subfigure}[b]{0.3\linewidth}
\includegraphics[width=\textwidth]{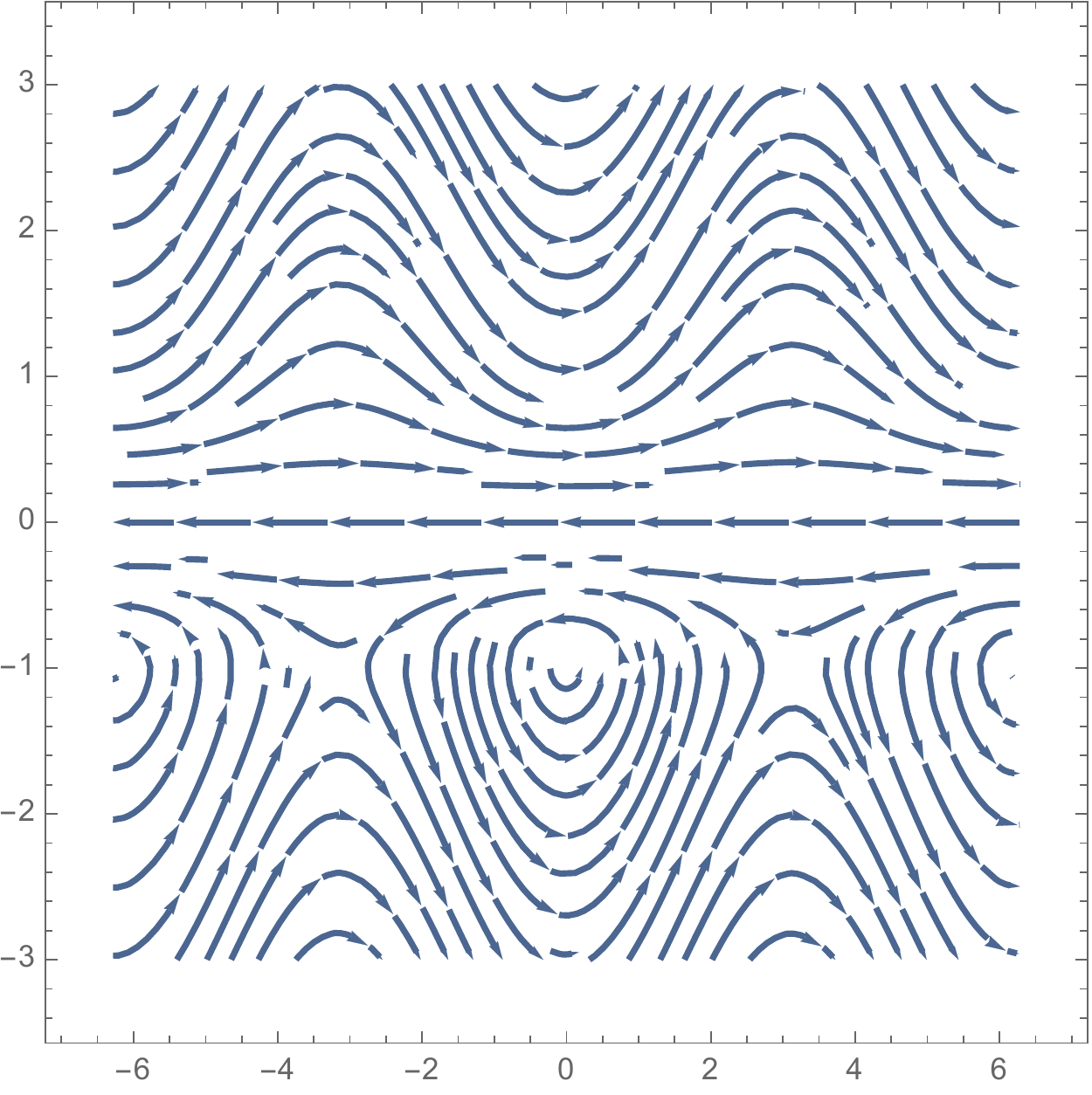}
\caption{$\mu=1$}
\end{subfigure}
\,\quad
\begin{subfigure}[b]{0.3\linewidth}
\includegraphics[width=\textwidth]{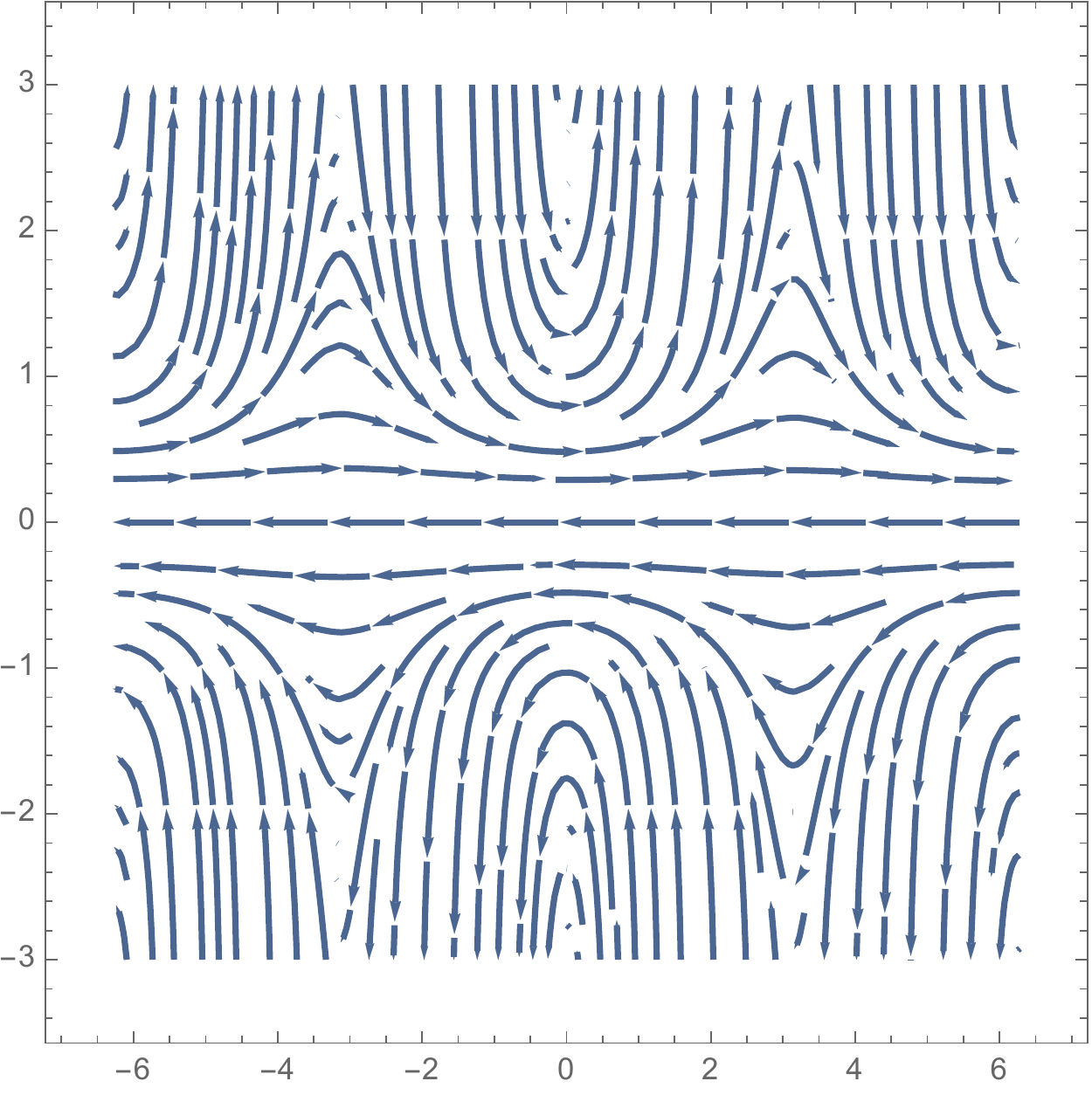}
\caption{$\mu=0$}
\end{subfigure}
\,\quad
\begin{subfigure}[b]{0.3\linewidth}
\includegraphics[width=\textwidth]{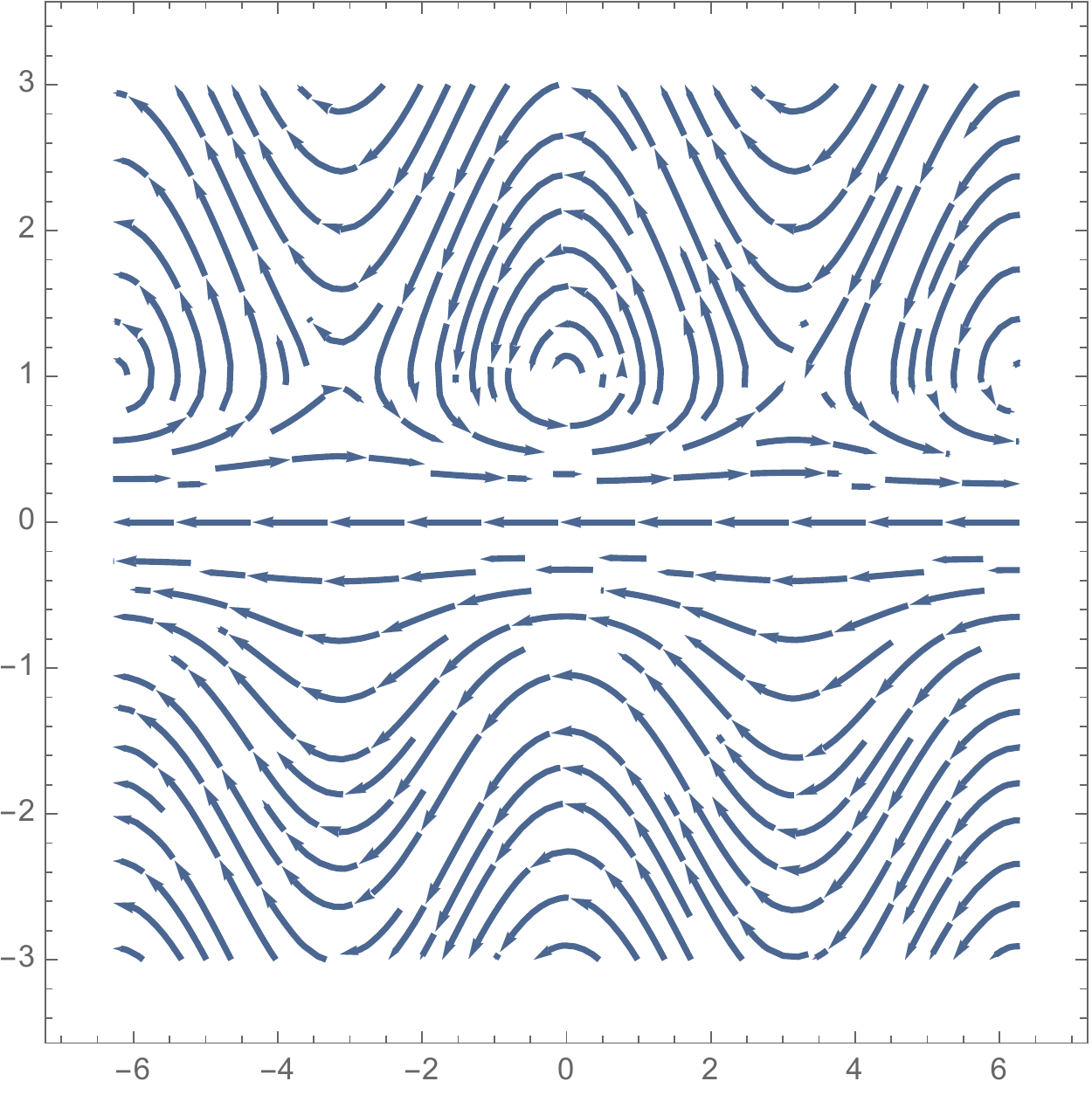}
\caption{ $\mu=-1$}
\end{subfigure}}
\caption{Level curves of $V(u,v)$ for the case $n<0$ odd. Here $n=-3$.}\label{figlevelodd2}
\end{figure}

\begin{theorem}[Case $n\notin\z$] Let $\gamma$ be a $n$-elastic curve with $n\notin\z$. If $\gamma$ is complete, depending on the values of $\mu$ we have:
\begin{enumerate}
\item Case $\mu>0$. Then $\gamma$ is an orbitlike $n$-elastic curve.
\item Case $\mu=0$. Then $\gamma$ is either an orbitlike or a catenary-like $n$-elastic curve (the latter is only possible if $n\leq -1$).
\item Case $\mu<0$. Then $\gamma$ is either an orbitlike, a borderline or a wavelike $n$-elastic curve.
\end{enumerate}
Moreover, for $n>-1$ and any value of $\mu$, we also have noncomplete $n$-elastic curves whose end points intersect the $y$-line.
\end{theorem}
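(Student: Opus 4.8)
The plan is to analyse the phase portrait of the autonomous system
$$\theta'=z^n+\mu,\qquad z'=\sin\theta$$
exactly as in the integer cases, but now on the domain $\Omega=\r\times(0,\infty)$ with boundary $\L=\r\times\{0\}$, since for $n\notin\z$ the power $z^n$ is defined only for $z>0$. In the $(\theta,z)$ variables the first integral \eqref{ode1} of Proposition \ref{integralequation} reads $G(z)+\cos\theta=C$, where $G(z)=\tfrac{1}{n+1}z^{n+1}+\mu z$ is well defined because $n\neq-1$ and satisfies $G'(z)=z^n+\mu$; thus the orbits are the level sets of $(z,\theta)\mapsto G(z)+\cos\theta$. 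The equilibria solve $z^n+\mu=0$ with $z>0$, which forces $\mu<0$ and $z=(-\mu)^{1/n}$, and their alternating center/saddle type in $m$ is read off from the matrix \eqref{matriz} as recorded before the statement.

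For $\mu>0$ there are no equilibria and $G'(z)=z^n+\mu>0$, so $G$ is a strictly increasing bijection onto its range while $\theta'\geq\mu>0$ keeps $\theta$ strictly monotone. Along a complete orbit $z$ is bounded by Corollary \ref{n2} and stays positive, so $z=G^{-1}(C-\cos\theta)$ is a single-valued entire graph over $\L$; the rank of $\theta$ is all of $\r$, Proposition \ref{pr2} yields periodicity of $z$ and hence of $\kappa=z^n+\mu>0$, and the constant sign of $\kappa$ identifies $\gamma$ as orbitlike. For $\mu=0$ the same monotonicity holds with $G(z)=\tfrac{1}{n+1}z^{n+1}$, and the outcome is governed by the range of $G$: when $n>-1$ one has $G\colon(0,\infty)\to(0,\infty)$ and $z$ bounded (Corollary \ref{n2}), so a complete orbit must again be an entire graph over $\L$ and is orbitlike; when $n<-1$ one has $G\colon(0,\infty)\to(-\infty,0)$ with $G(z)\to0^-$ as $z\to\infty$, so the orbits that are entire graphs over $\L$ (those with $C<-1$) are orbitlike, while those that fail to be graphs escape to $z=+\infty$, where $\theta'=z^n\to0$ keeps $\theta$ in a bounded interval and no equilibrium is approached; these latter are precisely the catenary-like curves, complete by Corollary \ref{n1}. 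This explains why the catenary-like alternative occurs only for $n\leq-1$.

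For $\mu<0$ the equilibria are present and I would read the three behaviours off the level sets $G(z)+\cos\theta=C$ near the critical value $G((-\mu)^{1/n})$. The closed loops encircling a center confine $\theta$ to a bounded interval and give a periodic, sign-changing curvature, hence wavelike curves; the separatrix issuing from a saddle approaches the equilibrium as $s\to\pm\infty$ (uniqueness of the initial value problem forcing infinite arc length), giving borderline curves asymptotic to the horizontal $n$-elastic line; and the large-amplitude level sets are entire graphs over $\L$, giving orbitlike curves.

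Finally, for the noncomplete statement I would exploit that $n>-1$ makes $G(0^+)=0$ finite, so $\L$ is reachable: choosing $C$ with $|C|<1$, the orbit meets the level $G(z)=0$, i.e. $z=0$, at the angles with $\cos\theta=C$, where $z'=\sin\theta\neq0$, so it crosses $\L$ transversally; by the vertical symmetry of Proposition \ref{pr1} this happens at two symmetric points, the maximal interval of definition is bounded, and the endpoints lie on the $y$-line. Since this construction needs only $G(0^+)$ finite, it works for every $\mu$ once $n>-1$. The main obstacle is the completeness dichotomy: one must show cleanly that whenever a level set fails to be an entire graph over $\L$ it either escapes to $z=+\infty$ (catenary-like, kept complete by $n\leq-1$ through Corollary \ref{n1}) or descends to $z=0$ (noncomplete, possible only when $n>-1$ so that $G(0^+)$ is finite), and that these are mutually exclusive. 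This is exactly where the limiting behaviour of $G$ at $0$ and $\infty$ must be matched against Corollaries \ref{n1} and \ref{n2}.
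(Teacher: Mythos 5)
Your proposal is correct and follows essentially the same route as the paper: a phase-plane analysis of the autonomous system $\theta'=z^n+\mu$, $z'=\sin\theta$ on $\r\times(0,\infty)$, classifying orbits by whether the level curves of the first integral are entire graphs over $\L$, loops around centers, separatrices of saddles, escapes to $z=\infty$, or arcs reaching $z=0$. The only difference is one of presentation -- you extract the dichotomies explicitly from the range and monotonicity of $G(z)=\tfrac{1}{n+1}z^{n+1}+\mu z$ where the paper reads them off the plotted phase portraits -- and the one loose point is your appeal to Corollary \ref{n1} for completeness of the catenary-like orbits, which really follows from the maximal-interval argument of Section \ref{4} once the orbit is known to stay away from $z=0$.
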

\begin{proof}
We first note that when $n\leq -1$ or $n>-1$ and $z_0$ sufficiently big, level curves are defined on $\r$. However, for $n>-1$ and $z_0$ close to zero, level curves meet $\L$ at two points (see Fig. \ref{figleveln>0} at $\theta_0=\pi$) and, hence, $\gamma$ intersects the $y$-line at two points. This proves the second statement.

We focus now on complete $n$-elastic curves, i.e. on level curves defined on $\r$. If $\mu>0$ (Fig. \ref{figleveln>0} and \ref{figleveln<0}, (a)), there are no equilibria and level curves are entire graphs on $\L$ so we obtain orbitlike $n$-elastic curves. The case $\mu=0$ and $n>-1$ (Fig. \ref{figleveln>0}, (b)) is similar and we also obtain orbitlike type curves.

If $\mu=0$ and $n\leq -1$ (Fig. \ref{figleveln<0}, (b)) there are no equilibria. For $z_0$ small enough we have entire graphs on $\L$ producing orbitlike $n$-elastic curves, while if $z_0$ is big enough and $\theta_0=0$, we have catenary-like $n$-elastic curves.

Finally, if $\mu<0$, we may have both centers and saddle critical points. Therefore, a similar argument as in previous cases shows the existence of orbitlike, borderline and wavelike $n$-elastic curves.
\end{proof}

\begin{figure}[hbtp]
\centering
\begin{subfigure}[b]{0.3\linewidth}
\includegraphics[width=\textwidth]{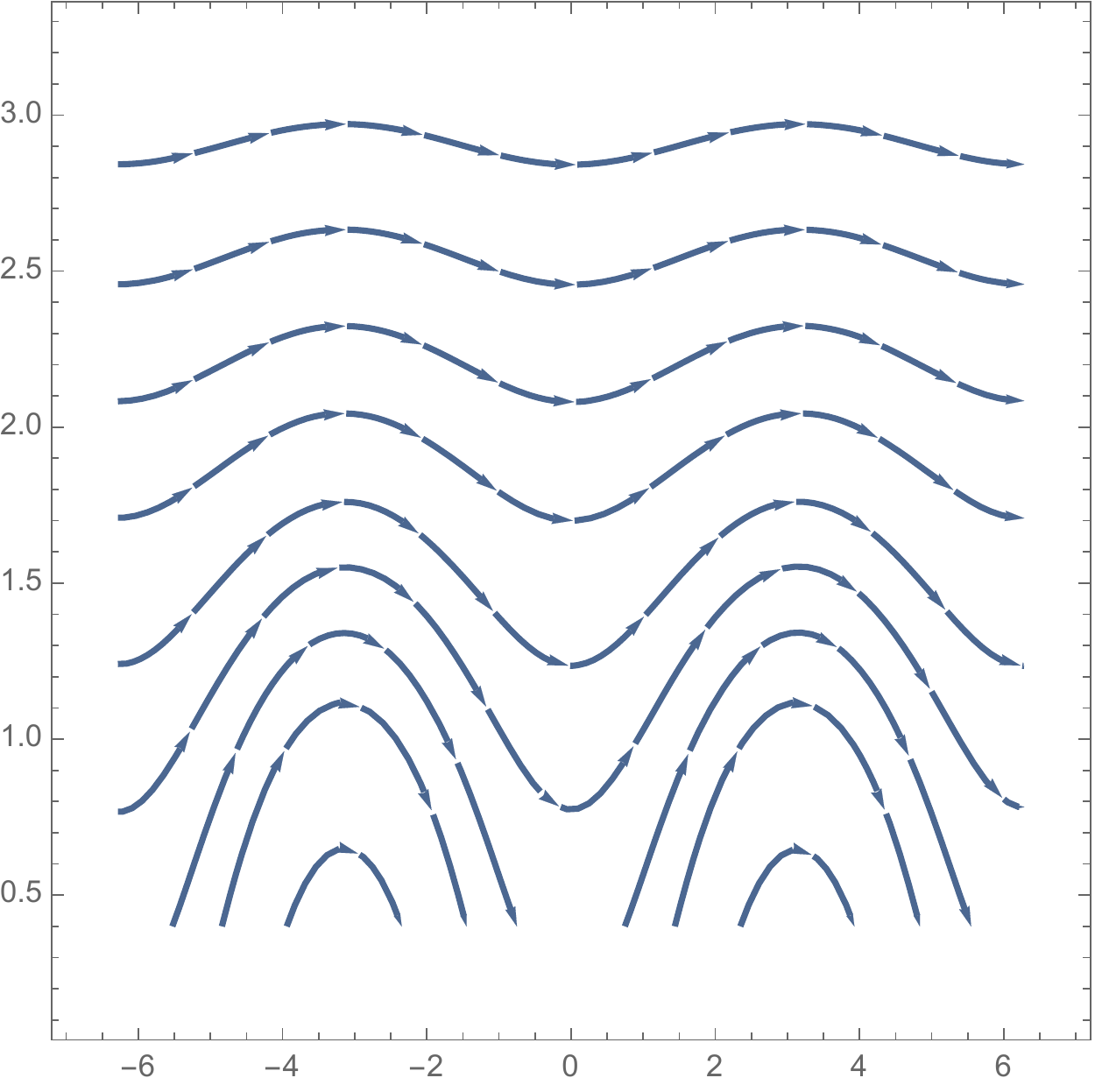}
\caption{$n=2.5$, $\mu=1$}
\end{subfigure}
\,\quad
\begin{subfigure}[b]{0.3\linewidth}
\includegraphics[width=\textwidth]{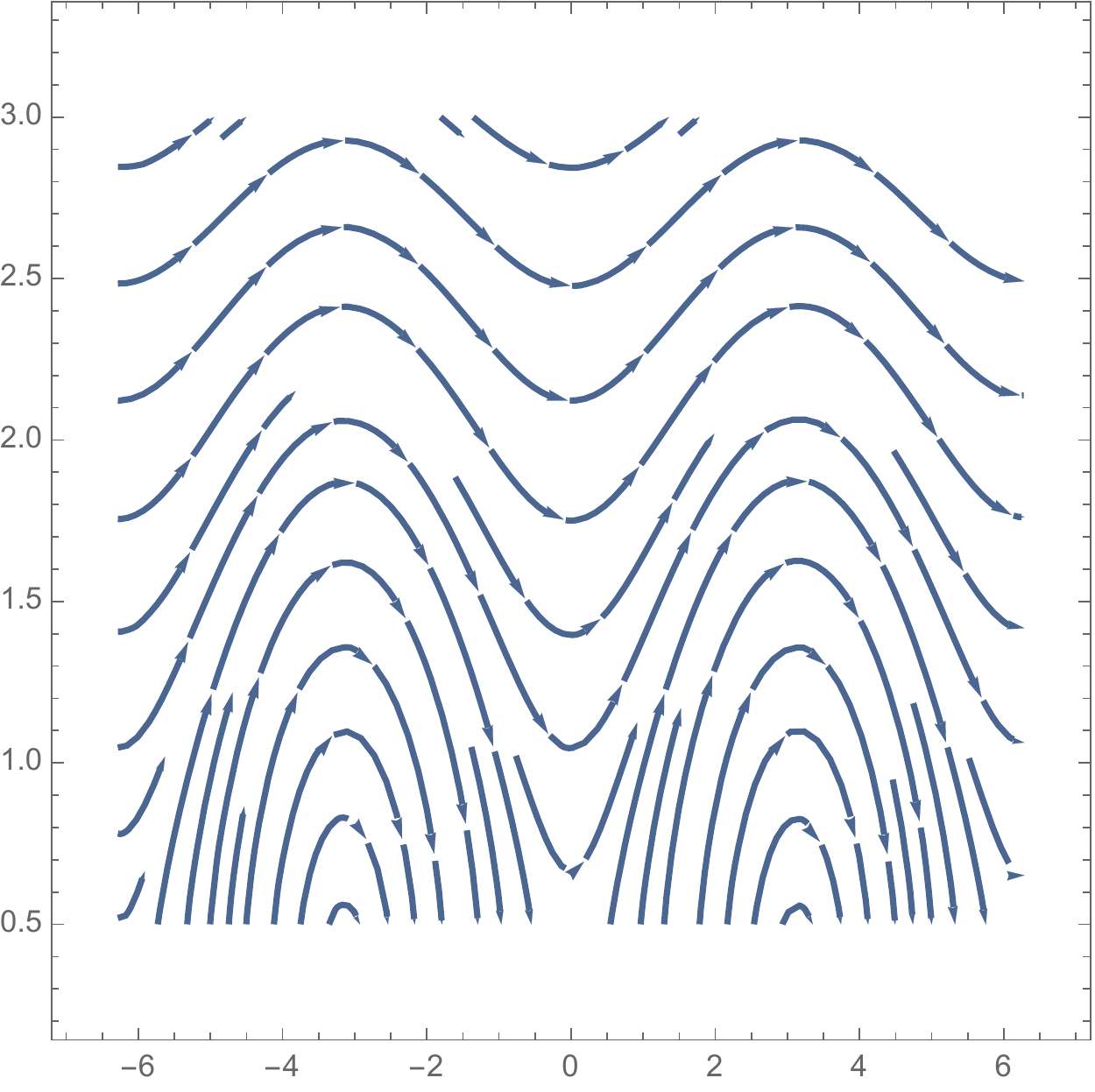}
\caption{$n=1.5$, $\mu=0$}
\end{subfigure}
\,\quad
\begin{subfigure}[b]{0.3\linewidth}
\includegraphics[width=\textwidth]{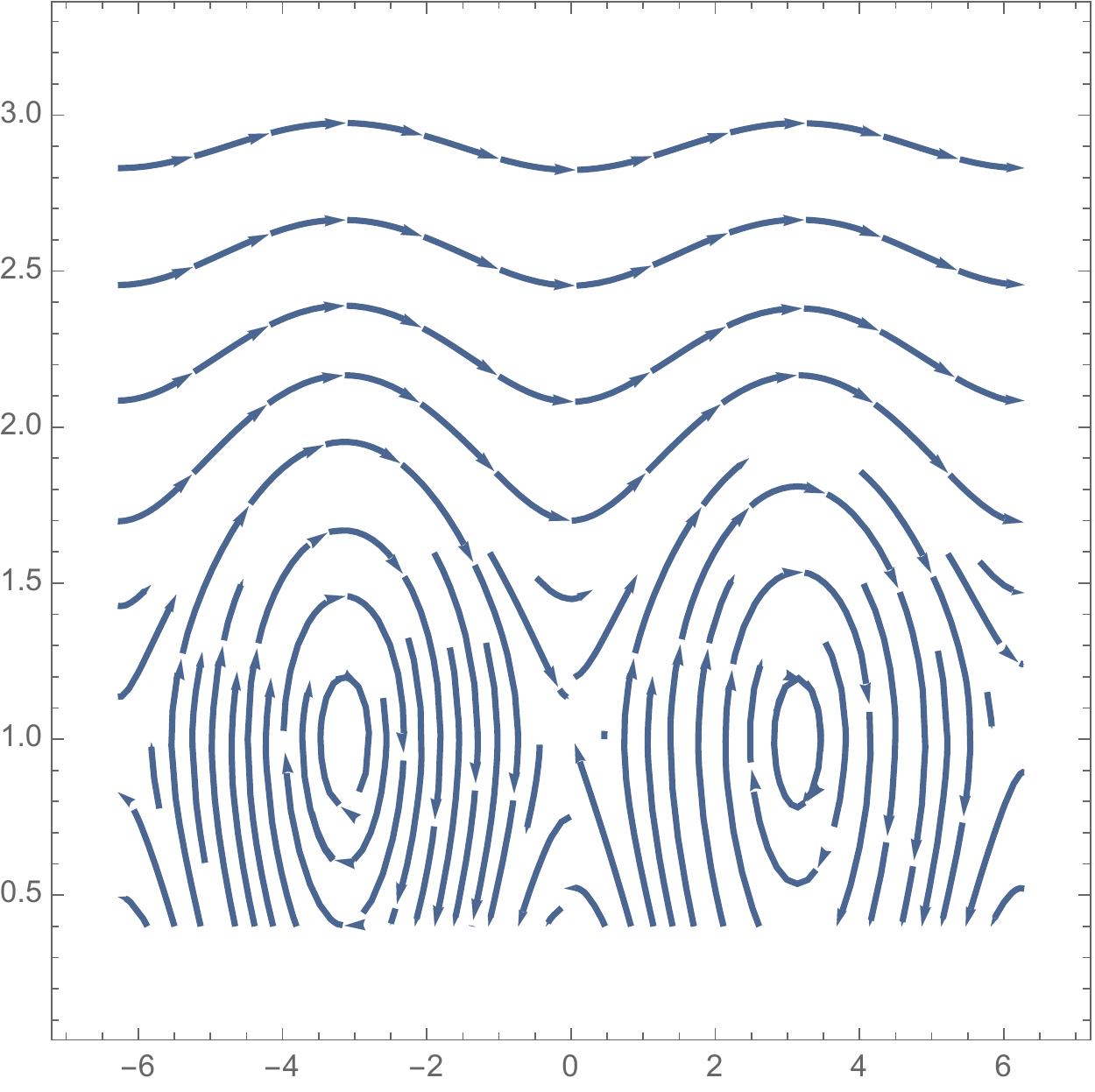}
\caption{$n=2.5$, $\mu=-1$}
\end{subfigure}
\caption{Level curves of $V(u,v)$ for the case $n>0$ and $n\notin\z$.}\label{figleveln>0}
\end{figure}

\begin{figure}[hbtp]
\centering
\begin{subfigure}[b]{0.3065\linewidth}
\includegraphics[width=\textwidth]{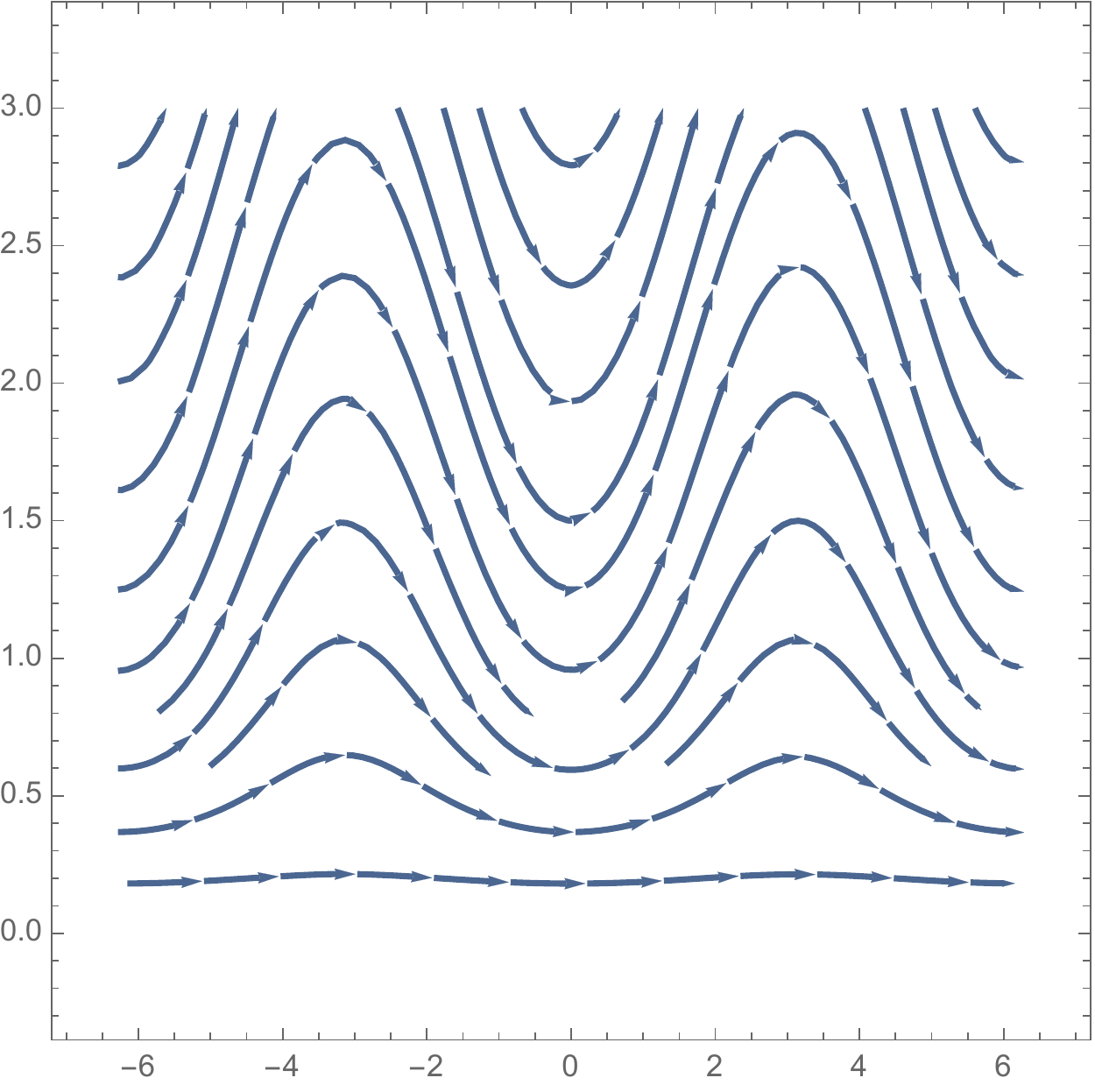}
\caption{$n=-2.5$, $\mu=1$}
\end{subfigure}
\,\quad
\begin{subfigure}[b]{0.3\linewidth}
\includegraphics[width=\textwidth]{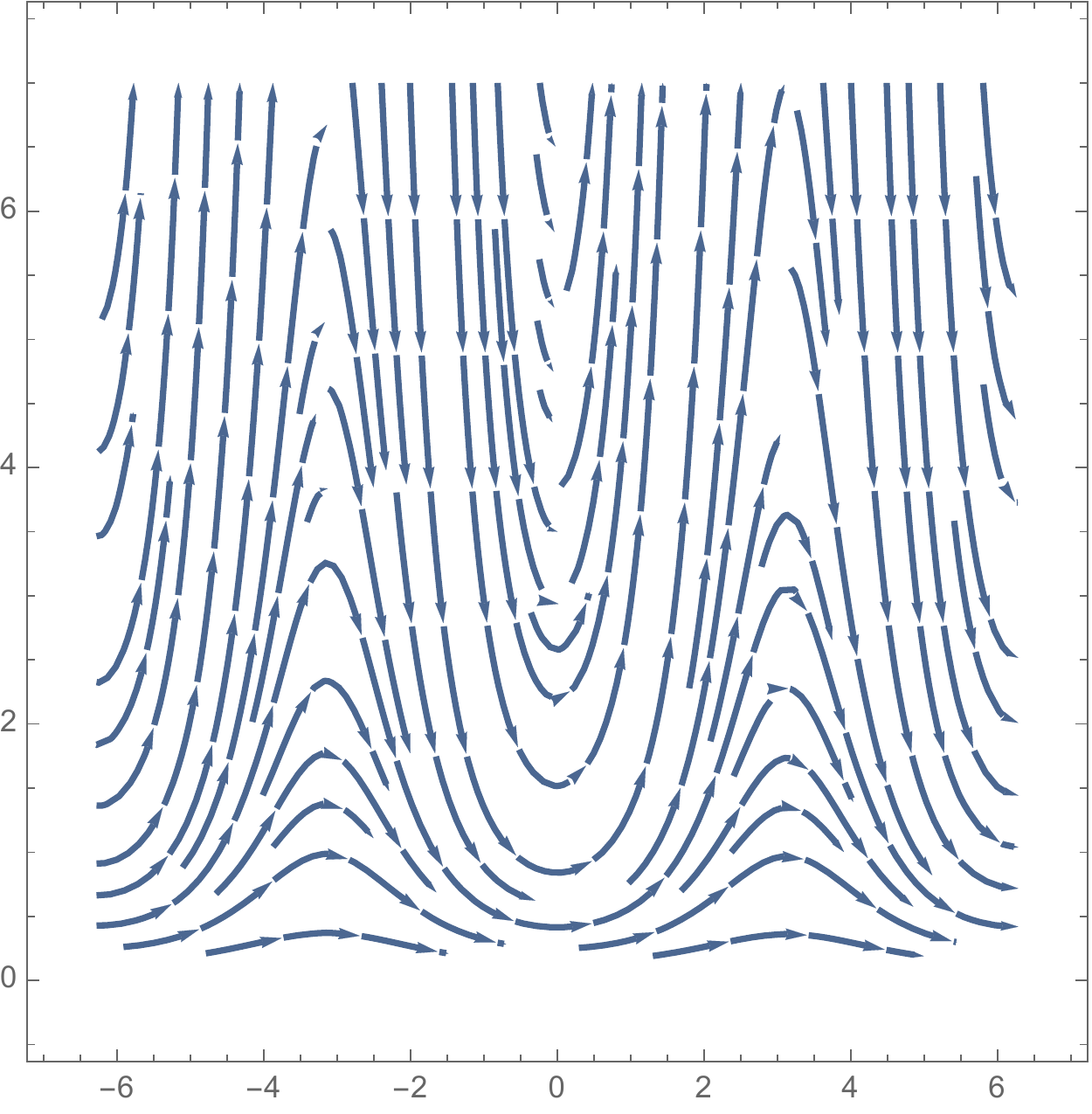}
\caption{$n=-1.5$, $\mu=0$}
\end{subfigure}
\,\quad
\begin{subfigure}[b]{0.3\linewidth}
\includegraphics[width=\textwidth]{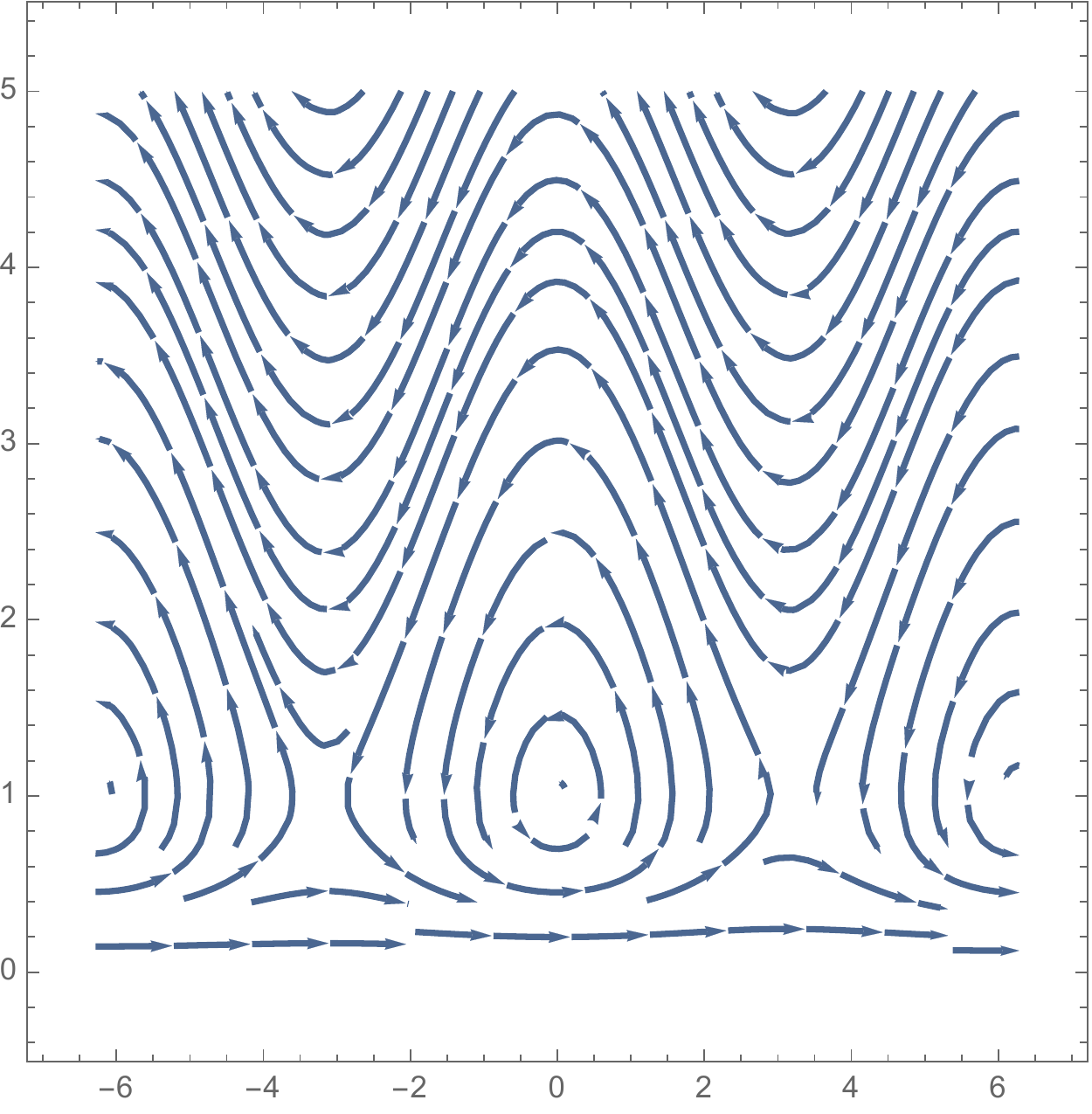}
\caption{$n=-2.5$, $\mu=-1$}
\end{subfigure}
\caption{Level curves of $V(u,v)$ for the case $n<0$ and $n\notin\z$.}\label{figleveln<0}
\end{figure}

In each of the cases discussed above, we obtained the phase portrait of $V$ using the \textit{Mathematica} software. The images were generated using the \texttt{StreamPlot} command.
   

\section*{Acknowledgements}
Rafael L\'opez has been partially supported by the grant no. MTM2017-89677-P, MINECO/ AEI/FEDER, UE.  The authors would like to thank the referee for carefully reviewing the paper.


\begin{thebibliography}{99}

\bibitem{Alexandrov} A. D. Alexandrov, Uniqueness theorems for surfaces in the large V. Vestnik Leningrad Univ. Math. 13 (1958), 5--8. English translation: AMS Transl. 21  (1962), 412--416.

\bibitem{AGP} J. Arroyo, O. J. Garay and A. P\'ampano, Constant mean curvature invariant surfaces and extremals of curvature energies. J. Math. Anal. App. 462 (2018), 1644--1668.

\bibitem{Bl} W. Blaschke, Vorlesungen \"{u}ber Differentialgeometrie und Geometrische Grundlagen von Einsteins Relativit\"{a}tstheorie I. Elementare Differentialgeometrie, J. Springer, Berlin, 1921.

\bibitem{bi}  J. E.  Bittencourt, Fundamentals of Plasma Physics, Springer, New York, 2004.

\bibitem{I} I. Castro and I. Castro-Infantes, Plane curves with curvature depending on distance to a line. Differential Geom. Appl.  44  (2016), 77--97.

\bibitem{Delaunay} C. Delaunay, Sur la surface de r\'evolution dont la courbure moyenne est constante. J. Math. Pures Appl. 16 (1841) 309--320.

\bibitem{Euler} L. Euler, De Curvis Elasticis. In: Methodus Inveniendi Lineas Curvas Maximi Minimive Propietate Gaudentes, Sive Solutio Problematis Isoperimetrici Lattissimo Sensu Accepti, Additamentum 1 Ser. 1 24, Lausanne, 1744.

\bibitem{fi} R. Finn, Equilibrium Capillary Surfaces, Grundlehren der Math. Wiss. 284, Springer, New York, 1986.

\bibitem{KP} M. Koiso and B. Palmer, Geometry and stability of bubbles with gravity. Indiana Univ. Math. J. 54 (2005), 65--98.

\bibitem{KP2} M. Koiso and B. Palmer, On a variational problem for soap films with gravity and partially free boundary. J. Math. Soc. Japan 57 (2005), 333--355.

\bibitem{LS} J. Langer and D. A. Singer,  The total squared curvature of closed curves. J. Differ. Geom. 20 (1984) 1--22.

\bibitem{li} J. G. Linhart, Plasma Physics. North-Holland Publishing Co.,  Amsterdam, 1960.

\bibitem{Rafa} R. L\'opez, Constant Mean Curvature Surfaces with Boundary. Springer-Verlag Berlin Heidelberg, 2013.

\bibitem{LP} R. L\'opez and A. P\'ampano, Classification of rotational surfaces in Euclidean space satisfying a linear relation between their principal curvatures. Math. Nach.  293  (2020), 735--753.

\bibitem{mi} T. Miura, Elastic curves and phase transitions. Math. Ann. 376 (2020), 1629--1674.

\bibitem{M} I. Mladenov and M. Hadzhilazova, The Many Faces of Elastica. Forum for Interdisciplinary Mathematics 3, Springer, 2017.

\bibitem{P} A. P\'ampano, Invariant Surfaces with Generalized Elastic Profile Curves. PhD Thesis, 2018.

\bibitem{se} J. Serrin, A symmetry problem in potential theory. Arch. Ration. Mech. Anal. 43 (1971), 304--318.

\bibitem{si} D. A. Singer, Lectures on elastic curves and rods. Curvature and variational modeling in physics and biophysics, 3--32, in AIP Conf. Proc., Vol. 1002 (American Institute of Physics, Melville, NY, 2008).

\bibitem{Truesdell} C. Truesdell, The rational mechanics of flexible or elastic bodies: 1638- 1788, in: Leonhard Euler, Opera Omnia, Orell F\"{u}ssli Turici, ser. 2, vol. XI, 2, 1960.

\bibitem{we} H. C. Wente, The symmetry of sessile and pendent drops. Pacific J. Math.88 (1980),   387--397.

\end{thebibliography}
\end{document}